\DeclareMathAlphabet{\mathbbb}{U}{bbold}{m}{n}
\newtheorem{theorem}{Theorem}[section]
\newtheorem{lemma}[theorem]{Lemma}
\newtheorem{proposition}[theorem]{Proposition}
\newtheorem{corollary}[theorem]{Corollary}
\theoremstyle{definition}
\newtheorem{definition}[theorem]{Definition}
\newtheorem{convention}[theorem]{Convention}
\newtheorem{remark}[theorem]{Remark}
\newtheorem{claim}[theorem]{Claim}
\newcounter{mycount}
\newcommand{\myref}[1]{\hyperref[#1]{#1}}
\newcommand{\sfour}{\mathsf{S4}}
\newcommand{\ipc}{\mathsf{IPC}}
\newcommand{\M}{\mathsf{M}}
\newcommand{\N}{\mathsf{N}}
\renewcommand{\L}{\mathsf{L}}
\newcommand{\F}{\mathfrak{F}}
\newcommand{\G}{\mathfrak{G}}
\newcommand{\Kur}{\mathsf{Kur}}
\newcommand{\kur}{\mathsf{kur}}
\newcommand{\mipc}{\mathsf{MIPC}}
\newcommand{\mgrz}{\mathsf{MGrz}}
\newcommand{\msfour}{\mathsf{MS4}}
\newcommand{\sfive}{\mathsf{S5}}
\newcommand{\ms}{\mathsf{MS4}}
\newcommand{\qsfour}{\mathsf{QS4}}
\newcommand{\cpc}{\mathsf{CPC}}
\renewcommand{\L}{\mathsf{L}}
\newcommand{\Lae}{\mathcal{L_{\forall\exists}}}
\newcommand{\iqc}{\mathsf{IQC}}
\newcommand{\sk}{\rho \hspace{0.08em}}
\newcommand{\msfrm}{\mathbf{DF}_{\ms}}
\newcommand{\mipcfrm}{\mathbf{DF}_{\mipc}}
\newcommand{\Grz}{\mathsf{Grz}}
\newcommand{\grz}{\mathsf{grz}}
\newcommand{\QGrz}{\mathsf{QGrz}}
\newcommand{\sfrm}{\mathbf{DF}_{\sfour}}
\newcommand{\ipcfrm}{\mathbf{DF}_{\ipc}}
\newcommand{\GKur}{\mathsf{GKur}}
\newcommand{\LKur}{\mathsf{LKur}}
\newcommand{\Rnew}{\overline{R}}
\newcommand{\Enew}{\overline{E}}
\newcommand{\Qnew}{Q_{\Enew}}
\DeclareMathOperator{\qmax}{qmax}
\newcommand{\bbox}{\blacksquare}
\newcommand{\bdia}{\blacklozenge}
\newcommand{\KP}{\mathsf{KP}}
\newcommand{\GKP}{\mathsf{GKP}}
\newcommand{\LKP}{\mathsf{LKP}}
\newcommand{\Q}{Q_E}
\tikzstyle{none}=[inner sep=0pt]
\tikzstyle{black dot}=[fill=black, draw=black, shape=circle, inner sep=0, minimum size=3.5pt]
\tikzstyle{to}=[->]
\tikzstyle{mapsto}=[{|->}]
\tikzstyle{none dashed}=[-, dashed]
\tikzstyle{dashed to}=[->, dashed]
\tikzstyle{dashed mapsto}=[{|->}, dashed]
\tikzstyle{Latex arrow}=[{-{Latex[width=1mm]}}]
\tikzstyle{dashed Latex arrow}=[{-{Latex[width=1mm]}}, dashed]
\newcommand\clusterone[2]{
  \path[draw,red] let \p1=(#1)
    in \pgfextra{
    \pgfmathsetmacro{\radius}{#2*0.3}
  }
  (\p1) circle(\radius cm);
}
\newcommand\clustertwo[4]{
  \path[draw,red] let \p1=(#1), \p2=(#2), \p3=($(\p1)!.5!(\p2)$)
  in \pgfextra{
    \pgfmathsetmacro{\angle}{atan2(\y2-\y1,\x2-\x1)}
    \pgfmathsetmacro{\focal}{veclen(\x2-\x1,\y2-\y1)/2/1cm}
    \pgfmathsetmacro{\lentotcm}{\focal*2*#3}
    \pgfmathsetmacro{\axeone}{(\lentotcm - 2 * \focal)/2+\focal}
    \pgfmathsetmacro{\axetwo}{sqrt((\lentotcm/2)*(\lentotcm/2)-\focal*\focal}
    \pgfmathsetmacro{\newaxetwo}{\axetwo*0.5*#4}
  }
  (\p3) ellipse[x radius=\axeone cm,y radius=\newaxetwo cm, rotate=\angle];
}
\setlist[enumerate,1]{label={\upshape(\arabic*)},ref=\arabic*}
\edef\plabelformat{(\string#2\string#1\string#3)}
\edef\plabelrangeformat{(\string#3\string#1,\string#2\string#6)}
\newcommand{\plabel}[1]{\label{#1}
\immediate\write\@auxout{\noexpand\crefformat{#1}{\noexpand\cref{#1}\plabelformat}
\noexpand\crefmultiformat{#1}{\noexpand\cref{#1}\plabelformat}{,\plabelformat}{,\plabelformat}{,\plabelformat}
\noexpand\crefrangeformat{#1}{\noexpand\cref{#1}\plabelrangeformat}}}
\begin{document}

\title{Failure of Esakia's theorem in the monadic setting}

\author{G.~Bezhanishvili}
\address{New Mexico State University}
\email{guram@nmsu.edu}

\author{L.~Carai}
\address{University of Milan}
\email{luca.carai.uni@gmail.com}

\subjclass[2020]{03B20; 03B45; 03B55}
\keywords{Intuitionistic logic; modal logic; G\"odel translation; modal companion}

\begin{abstract}
Esakia's theorem states that Grzegorczyk's logic 
is the greatest 
modal companion of intuitionistic propositional calculus. 
We prove that already the one-variable fragment of intuitionistic predicate calculus 
does not have a greatest modal companion, yielding that Esakia's theorem fails in the monadic setting. 
\end{abstract}

\maketitle
\tableofcontents

\section{Introduction}

In~\cite{God33} G\"odel suggested to interpret intuitionistic logic as a fragment of modal logic. McKinsey and Tarski~\cite{MT48} proved that the G\"odel translation embeds the intuitionistic propositional calculus $\ipc$ faithfully into the modal logic $\sfour$. There are many other modal logics above $\sfour$ into which $\ipc$ embeds faithfully, known as modal companions of $\ipc$ (see, e.g., \cite[Sec.~9.6]{CZ97}). It is a well-known result of Esakia~\cite{Esa79} that the Grzegorczyk logic $\Grz$ is the greatest modal companion of $\ipc$. 

The situation becomes more complicated in the predicate case. Let $\iqc$ be the intuitionistic predicate calculus, $\qsfour$ the predicate $\sfour$, and $\QGrz$ the predicate Grzegorczyk logic. It is a well-known result of Rasiowa and Sikorski \cite{RS53} that the G\"odel translation embeds $\iqc$ faithfully into $\qsfour$. In \cite{Pan89} it is claimed that it also embeds $\iqc$ faithfully into $\QGrz$, and in \cite{Nau91} it is claimed that $\QGrz$ is no longer the greatest modal companion of $\iqc$ (see also \cite[Thm.~2.11.14]{GSS09}).
However, the proofs in \cite{Pan89,Nau91} use the Flagg-Friedman translation \cite{FF86} of $\qsfour$ to $\iqc$, which Inou\' e \cite{Ino92} showed is 
not faithful.\footnote{We thank Valentin Shehtman for drawing our attention to \cite{Ino92}.}
In fact, as we will see in \cref{thm:Naumov}, the extension of $\QGrz$ considered by Naumov is not a modal companion of $\iqc$.
Nevertheless, as we will see in \cref{thm:no greatest mod comp},
the monadic fragment of $\iqc$, where we only consider formulas with one fixed variable, does not have a greatest modal companion.
The full predicate version requires further examination (see \cref{rem:discussion predicate}).

The study of the monadic fragment of classical predicate calculus was initiated by Hilbert and Ackermann \cite{HA28}, 
and Wajsberg \cite{Waj33} proved that $\mathsf{S5}$ axiomatizes this fragment.
Prior~\cite{Pri57} introduced the monadic intuitionistic calculus $\mipc$, and  Bull~\cite{Bul66} proved that it axiomatizes the monadic fragment of $\iqc$. 
The monadic fragment of $\qsfour$ is $\ms$ and the monadic fragment of $\QGrz$ is $\mgrz$ (see \cite{FS77,Esa88,BK24}).
The G\"odel translation embeds $\mipc$ faithfully into both $\ms$ and $\mgrz$, but our main result shows that there is no greatest modal companion of $\mipc$, thus yielding a failure of Esakia's theorem in the monadic setting.

We achieve this by introducing modal versions of the monadic Kuroda formula. The Kuroda formula $\forall x\neg\neg P(x)\to\neg\neg\forall x P(x)$ is not provable in $\iqc$. Heyting considered this as one of the most striking features of $\iqc$ (see \cite[p.~108]{Hey56}). 
We introduce two natural modal versions of the monadic Kuroda formula $\forall\neg\neg p\to\neg\neg\forall p$, 
which result in two extensions of $\ms$ that we term the global and local Kuroda logics. 
The global Kuroda logic $\GKur$ is obtained by adding the G\"odel translation of $\forall\neg\neg p\to\neg\neg\forall p$ to $\ms$,
while the local Kuroda logic $\LKur$ is an appropriate weakening of $\GKur$.  
We prove that $\GKur$ is a modal companion of $\mipc+\forall\neg\neg p\to\neg\neg\forall p$, that $\LKur$ is a modal companion of $\mipc$, and that $\LKur$ is not comparable with $\mgrz$. In addition,  $\mgrz\vee\LKur=\mgrz\vee\GKur$, thus yielding that $\mipc$ can't have the greatest modal companion.  

We conclude the introduction by providing a brief semantic explanation of the validity of Esakia's theorem in the propositional case and why it fails in the monadic case. An adequate semantics for $\ipc$ is given by the category $\ipcfrm$ of descriptive intuitionistic frames and that
for $\sfour$ by the category $\sfrm$ of descriptive $\sfour$-frames (see, e.g., \cite[Thm.~8.36]{CZ97}). There is an embedding $\sigma \colon \ipcfrm \to \sfrm$, which has a left adjoint $\rho \colon \sfrm \to \ipcfrm$ such that $\F \cong \rho\sigma\F$ for each $\F \in \ipcfrm$ (see, e.g., \cite[Sec.~8.3 and 8.4]{CZ97} or \cite[Sec.~2.2 and 2.5]{Esa19} for the algebraic formulation). If $\M$ is a modal companion of $\ipc$, then for each $\F \in \ipcfrm$ there is $\G \in \sfrm$ such that $\G \vDash \M$ and $\F \cong \rho\G$. 
This implies that $\sigma\F \vDash \M$. But $\Grz$ is the logic of $\{ \sigma\F : \F \in \ipcfrm\}$. Thus, $\M \subseteq \Grz$, yielding Esakia's theorem.

In the monadic case, an adequate semantics for $\mipc$ is given by the category $\mipcfrm$ of descriptive $\mipc$-frames and that for $\msfour$ by the category $\msfrm$ of descriptive $\msfour$-frames. We still have a functor $\rho \colon \msfrm \to \mipcfrm$ (see \cref{thm:skeleton descriptive:item1a}). However, the analogue of $\sigma$ is no longer well defined, and it remains open whether $\rho$ has a right adjoint. It is this lack of nice semantic correspondence in the monadic case that is responsible for the failure of Esakia's theorem (as well as that of the Blok-Esakia theorem \cite{BC24a}). We further discuss this in the conclusions, after establishing our main results.

\section{Preliminaries}

In this section we briefly recall $\mipc$, $\ms$, $\mgrz$ and their semantics. Let $\Lae$ be a propositional bimodal language, where the modalities are denoted by $\forall$ and $\exists$.

\needspace{5\baselineskip}
\begin{definition} \label{def: MIPC}
\hfill\begin{enumerate}
\item The \textit{monadic intuitionistic propositional calculus} $\mipc$ is the smallest set of formulas of $\Lae$ containing
\begin{enumerate}
\item all theorems of $\ipc$;
\item $\sfour$-axioms for $\forall$: $\forall(p\land q)\leftrightarrow(\forall p\land\forall q)$, $\forall p \rightarrow p$, $\forall p \rightarrow \forall \forall p$;
\item $\sfive$-axioms for $\exists$: $\exists(p\vee q)\leftrightarrow(\exists p\vee\exists q)$, $p \rightarrow \exists p$, $\exists \exists p \rightarrow \exists p$, $(\exists p \land \exists q) \rightarrow \exists (\exists p \land q)$; 
\item connecting axioms: $\exists\forall p\rightarrow\forall p$, $\exists p \rightarrow \forall\exists p$;
\end{enumerate}
and closed under modus ponens, substitution, and necessitation $(\varphi / \forall \varphi )$.
\item An \emph{extension of $\mipc$} is a set of formulas of $\Lae$ containing $\mipc$ and closed under the above rules of inference. 
\end{enumerate}
\end{definition}

Kripke semantics for $\mipc$ was introduced in \cite{Ono77} (see also \cite{FS78b}).
We recall that a quasi-order is a reflexive and transitive binary relation. Given a quasi-order $Q$ on a set $X$, 
we recall that $U \subseteq X$ is a \textit{$Q$-upset} if $x\in U$ and $xQy$ imply $y\in U$.

\begin{convention}\label{conv:EQ}
For a quasi-order $Q$ on a set $X$, we denote by $E_Q$ the equivalence relation defined by 
\begin{equation*}
x E_Q y \iff xQy \text{ and } yQx.
\end{equation*}
\end{convention}

\begin{definition}\plabel{def:mipc-frame}
An \textit{$\mipc$-frame} is a tuple $\F=(X,R,Q)$ such that
\begin{enumerate}
\item\label[def:mipc-frame]{def:mipc-frame:item1} $R$ is a partial order on $X$;
\item\label[def:mipc-frame]{def:mipc-frame:item2} $Q$ is a quasi-order on $X$;
\item\label[def:mipc-frame]{def:mipc-frame:item3} $R \subseteq Q$;
\item\label[def:mipc-frame]{def:mipc-frame:item4} $x Q y \Longrightarrow \exists z \in X : x R z \; \& \; z E_Q y$.
\end{enumerate}
\begin{figure}[!ht]
\begin{center}
\begin{tikzpicture}
	\begin{pgfonlayer}{nodelayer}
		\node [style=black dot] (1) at (0, 0) {};
		\node [style=black dot] (2) at (0, 2) {};
		\node [style=black dot] (3) at (2, 2) {};
		\node [style=none] (4) at (1, 2.25) {$E_Q$};
		\node [style=none] (5) at (-0.27, 1) {$R$};
		\node [style=none] (6) at (1.4, 0.96) {$Q$};
		\node [style=none] (7) at (0, -0.27) {$x$};
		\node [style=none] (8) at (-0.2, 2.27) {$\exists z$};
		\node [style=none] (9) at (2.25, 2.25) {$y$};
	\end{pgfonlayer}
	\begin{pgfonlayer}{edgelayer}
		\draw [style=dashed Latex arrow] (1) to (2);
		\draw [style=Latex arrow] (1) to (3);
		\draw [style=none dashed] (2) to (3);
	\end{pgfonlayer}
\end{tikzpicture}
\end{center}
\end{figure}
\end{definition}

$\mipc$-frames provide a relational semantics for $\mipc$ that extends the usual Kripke semantics for $\ipc$ (see, e.g., \cite[Sec.~10.2]{GKWZ03}). Let $\F=(X,R,Q)$ be an $\mipc$-frame.
A \textit{valuation} on $\F$ is a map associating an $R$-upset to each propositional letter. To see how $\forall$ and $\exists$ are being interpreted in $\F$, let $x \in X$ and $v$ be a valuation on $\F$. Then, for each formula $\varphi$ of $\Lae$,
\begin{align*}
x \vDash_v \forall \varphi & \iff (\forall y \in X) (x Q y \Rightarrow y \vDash_v \varphi);\\
x \vDash_v \exists \varphi & \iff (\exists y \in X) (x E_Q y \; \& \; y \vDash_v \varphi).
\end{align*}

In \cite{Bul65} it was proved that $\mipc$ has the finite model property (fmp for short). The proof contained a gap, which was corrected in \cite{Ono77} and \cite{FS78a}. As a consequence, we obtain:

\begin{theorem}
$\mipc$ is Kripke complete. 
\end{theorem}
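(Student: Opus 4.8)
The plan is to derive Kripke completeness as an essentially immediate consequence of the finite model property just recalled. The work splits into a soundness half and a completeness half, with the latter being almost automatic once the fmp is in hand.

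First I would verify soundness: every theorem of $\mipc$ is valid on every $\mipc$-frame $\F=(X,R,Q)$. This reduces to checking that each axiom is valid and that modus ponens, substitution, and necessitation preserve validity, after first confirming that the truth set of any formula is an $R$-upset (intuitionistic persistence). The propositional axioms and the $\sfour$-axioms for $\forall$ are validated by $R$ being a partial order and $Q$ a quasi-order, and the $\sfive$-axioms for $\exists$ by $E_Q$ being an equivalence relation. The inclusion $R\subseteq Q$ is what makes the interpretation of $\forall$ an $R$-upset, the last frame condition (``$xQy$ implies $xRz$ and $zE_Q y$ for some $z$'') does the same for $\exists$ and also validates the connecting axiom $\exists p\to\forall\exists p$, while $\exists\forall p\to\forall p$ follows from transitivity of $Q$ together with $E_Q$. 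These verifications are routine and I would not dwell on them.

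For completeness I would argue by contraposition. Suppose $\varphi\notin\mipc$. By the finite model property of $\mipc$ (Bull~\cite{Bul65}, with the gap repaired in \cite{Ono77} and \cite{FS78a}), there is a finite $\mipc$-frame together with a valuation refuting $\varphi$. Since a finite $\mipc$-frame is in particular an $\mipc$-frame, $\varphi$ fails on the class of all $\mipc$-frames. Combining this with soundness gives the chain of inclusions
\[
\mipc \subseteq \{\psi : \psi \text{ is valid on all } \mipc\text{-frames}\} \subseteq \{\psi : \psi \text{ is valid on all finite } \mipc\text{-frames}\} = \mipc,
\]
where the middle inclusion holds because finite $\mipc$-frames form a subclass of all $\mipc$-frames, and the final equality is the finite model property. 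Hence all three sets coincide, and $\mipc$ is exactly the set of formulas valid on the class of all $\mipc$-frames, which is Kripke completeness.

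The only genuine difficulty here is the finite model property itself, which I am importing as a known and historically delicate result; once it is granted, the passage to Kripke completeness carries no further content, since enlarging the class of frames from finite $\mipc$-frames to all $\mipc$-frames can only make validity harder to achieve, and soundness already supplies the reverse inclusion by guaranteeing that theorems remain valid after this enlargement.
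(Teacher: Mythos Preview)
Your proposal is correct and follows the same approach as the paper, which simply states the theorem as a consequence of the finite model property of $\mipc$ (with the same citations \cite{Bul65,Ono77,FS78a}) without giving further details. You have merely made explicit the soundness half and the routine passage from fmp to Kripke completeness that the paper leaves implicit.
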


Since not every extension of $\mipc$ is Kripke complete, we require a more general semantics of descriptive frames. Let $X$ be a topological space. We recall that a subset $U$ of $X$ is {\em clopen} if it is both closed and open, and that $X$ is {\em zero-dimensional} if clopen sets form a basis for $X$. We also recall that $X$ is a {\em Stone space} if it is compact, Hausdorff, and zero-dimensional. A binary relation $R$ on $X$ is {\em continuous} provided the image $R[x] \coloneqq \{ y \in X : xRy \}$ is closed for each $x\in X$ and the inverse image $R^{-1}[U] \coloneqq \{ y\in X : yRx$ for some $x\in U\}$ is clopen for each clopen $U$ of $X$.

\begin{definition}\plabel{def:descriptive mipc-frame}
An $\mipc$-frame $\mathfrak F=(X,R,Q)$ is a \textit{descriptive $\mipc$-frame} if $X$ is equipped with a Stone topology such that
\begin{enumerate}
\item\label[def:descriptive mipc-frame]{def:descriptive mipc-frame:item2} $R$ is a continuous partial order;
\item\label[def:descriptive mipc-frame]{def:descriptive mipc-frame:item3} $Q$ is a continuous quasi-order;
\item\label[def:descriptive mipc-frame]{def:descriptive mipc-frame:item4} 
$U$ a clopen $R$-upset $\Longrightarrow$ $Q[U]$ a clopen $R$-upset.
\end{enumerate}
\end{definition}

\begin{definition}\plabel{def:mipcfrm-morphisms}
Let $\F_1=(X_1,R_1,Q_1)$ and $\F_2=(X_2,R_2,Q_2)$ be descriptive $\mipc$-frames. A map $f \colon X_1 \to X_2$ is a \emph{morphism of descriptive $\mipc$-frames} if 
\begin{enumerate}
\item\label[def:mipcfrm-morphisms]{def:mipcfrm-morphisms:item1} $f$ is continuous;
\item\label[def:mipcfrm-morphisms]{def:mipcfrm-morphisms:item2} $R_2[f(x)]=fR_1[x]$ for each $x \in X_1$; 
\item\label[def:mipcfrm-morphisms]{def:mipcfrm-morphisms:item3} $Q_2[f(x)]=fQ_1[x]$ for each $x \in X_1$; 
\item\label[def:mipcfrm-morphisms]{def:mipcfrm-morphisms:item4} $Q_2^{-1}[f(x)]= R_2^{-1}fQ_1^{-1}[x]$ for each $x\in X_1$.
\end{enumerate}
\end{definition}

\begin{remark}\label{rem:morph dmsfrm:item1}
In other words, parts \eqref{def:mipcfrm-morphisms:item2} and \eqref{def:mipcfrm-morphisms:item3} of \cref{def:mipcfrm-morphisms} say that $f$ is a p-morphism with respect to both $R$ and $Q$, while part \eqref{def:mipcfrm-morphisms:item4} is weaker than saying that $f$ is a p-morphism with respect to the inverse of $Q$.
\end{remark}

\begin{definition}
    Let $\mipcfrm$ be the category of descriptive $\mipc$-frames and their morphisms.
\end{definition}

\begin{remark}\label{rem:morph dmsfrm:item2}
It is straightforward to see that isomorphisms in $\mipcfrm$ are homeomorphisms that preserve and reflect $R$ and $Q$.  
\end{remark}

The algebraic semantics for $\mipc$ is given by monadic Heyting algebras \cite{MV57}. Since descriptive $\mipc$-frames are exactly the duals of monadic Heyting algebras \cite[Thm.~17]{Bez99}, we obtain:

\begin{theorem}\label{thm:completeness descriptive}
Each extension of $\mipc$ is complete with respect to its class of descriptive $\mipc$-frames.
\end{theorem}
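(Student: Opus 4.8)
The plan is to deduce the statement from two facts: the algebraic completeness of extensions of $\mipc$, and the duality between monadic Heyting algebras and descriptive $\mipc$-frames recalled just before the theorem. First I would recall that $\mipc$ is algebraized by the variety $\mha$ of monadic Heyting algebras, so that each extension $\L$ of $\mipc$ corresponds to the subvariety $\V_\L \subseteq \mha$ consisting of the algebras validating every formula in $\L$; because $\L$ is closed under substitution, $\V_\L$ is indeed a variety, defined by the equations $\varphi \approx \top$ for $\varphi \in \L$. The standard Lindenbaum–Tarski construction then yields completeness with respect to $\V_\L$: the classes of $\Lae$-formulas modulo interprovability in $\L$ form a monadic Heyting algebra $\A_\L$ (the necessitation rule $\varphi/\forall\varphi$ being matched by $\forall\top = \top$, which holds in every monadic Heyting algebra), the canonical assignment sends each letter $p$ to its class $[p]$, and $\varphi \in \L$ if and only if $[\varphi]$ is the top element of $\A_\L$. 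Since $\A_\L \in \V_\L$ by construction, every non-theorem of $\L$ is refuted by some algebra in $\V_\L$.

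Next I would invoke the duality of \cite[Thm.~17]{Bez99}, which identifies descriptive $\mipc$-frames with the duals of monadic Heyting algebras. Under this correspondence a monadic Heyting algebra $\A$ gives rise to a descriptive $\mipc$-frame $\A_*$ whose clopen $R$-upsets, together with the operations induced by $R$ and $Q$, form an algebra isomorphic to $\A$. The point I need from the duality is that it preserves and reflects validity: admissible valuations on $\A_*$ (those sending propositional letters to clopen $R$-upsets) are exactly the assignments into $\A$ under the isomorphism, and the relational clauses for $\forall$ and $\exists$ in $\A_*$ compute the monadic operations of $\A$. Consequently $\A \vDash \varphi$ if and only if $\A_* \vDash \varphi$ for every $\Lae$-formula $\varphi$.

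Finally I would assemble the pieces. Given an extension $\L$ and a non-theorem $\varphi \notin \L$, the Lindenbaum–Tarski algebra $\A_\L$ lies in $\V_\L$ and refutes $\varphi$; passing to its dual frame $\F = \A_*^\L$ produces a descriptive $\mipc$-frame that validates every formula of $\L$ (because $\A_\L$ does) while refuting $\varphi$. Hence each extension $\L$ is complete with respect to its class of descriptive $\mipc$-frames. I expect the only delicate step to be purely bookkeeping inside the duality of \cite{Bez99}: verifying that the translation between admissible valuations and algebra assignments respects the two modal operators $\forall$ and $\exists$, so that validity genuinely transfers between $\A_\L$ and $\F$. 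Once this is granted, the completeness theorem is a formal consequence of the algebraic completeness established in the first step.
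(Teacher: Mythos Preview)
Your proposal is correct and follows exactly the route the paper indicates: the paper does not give a detailed proof but simply notes that monadic Heyting algebras provide the algebraic semantics for $\mipc$ and that descriptive $\mipc$-frames are their duals \cite[Thm.~17]{Bez99}, from which the theorem is said to follow. Your write-up fleshes out precisely this argument via Lindenbaum--Tarski algebras and the validity-preserving duality, so there is nothing to add.
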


\begin{remark}
We recall that in descriptive $\mipc$-frames formulas are interpreted 
as clopen $R$-upsets.
\end{remark}

Because of \cref{thm:completeness descriptive} we mainly work with descriptive $\mipc$-frames, although most of our results can also be formulated in the language of monadic Heyting algebras.

We next turn our attention to $\ms$. Let $\mathcal{L}_{\Box \forall}$ be a propositional bimodal language with two modalities $\Box$ and $\forall$. As usual, $\Diamond$ abbreviates $\neg\Box\neg$ and $\exists$ abbreviates $\neg\forall\neg$.

\needspace{5\baselineskip}
\begin{definition}\label{def:ms4}
\hfill\begin{enumerate}
\item The \emph{monadic $\sfour$}, denoted $\ms$, is the smallest set of formulas of 
$\mathcal{L}_{\Box \forall}$ containing all theorems of the classical propositional calculus $\cpc$, the $\sfour$-axioms for $\Box$, the $\sfive$-axioms for $\forall$, the left commutativity axiom
\[
\Box \forall p \to \forall \Box p,
\]
and closed under modus ponens, substitution, $\Box$-necessitation, and $\forall$-necessi\-tation.
\item An \emph{extension of $\ms$} is a set of formulas of $\mathcal{L}_{\Box \forall}$ containing $\ms$ and closed under the above rules of inference. 
\end{enumerate}
\end{definition}

\begin{remark}\label{rem:master modality}
We let $\bbox$ denote the compound modality $\Box\forall$. It is immediate from the definition of $\ms$ that $\bbox$ is an $\mathsf{S4}$-modality and that both $\bbox p \to \Box p$ and $\bbox p \to \forall p$ are provable in $\ms$. Therefore, $\bbox$ is a \emph{master modality} for $\ms$ (see, e.g., \cite[p.~71]{Kra99}).
\end{remark}

We will mainly be interested in the following extension of $\ms$.

\begin{definition}
The monadic Grzegorczyk logic, denoted $\mgrz$, is the smallest extension of $\ms$ containing the {\em Grzegorczyk axiom}
\[
\grz = \Box(\Box(p\to\Box p)\to p)\to p.
\]
\end{definition}

Kripke semantics for extensions of $\ms$ was introduced by Esakia \cite{Esa88}. To avoid confusion, we write $\F=(X,R,Q)$ for an $\mipc$-frame and $\G=(Y,R,E)$ for an $\ms$-frame.

\begin{definition}\plabel{def:ms-frame}
An \emph{$\ms$-frame} is a tuple $\G=(Y,R,E)$ such that
\begin{enumerate}
\item\label[def:ms-frame]{def:ms-frame:item2} $R$ is a quasi-order on $Y$;
\item\label[def:ms-frame]{def:ms-frame:item3} $E$ is an equivalence relation on $Y$;
\item\label[def:ms-frame]{def:ms-frame:item4}
$x E y \; \& \; y R z \Longrightarrow \exists u \in Y : x R u \; \& \; u E z$.
\end{enumerate}
\begin{figure}[!ht]
\begin{center}
\begin{tikzpicture}
\node at (-0.2,-0.25) {$x$};
\node at (-0.25,2.27) {$\exists u$};
\node at (2.2,-0.25) {$y$};
\node at (2.2,2.25) {$z$};
\fill (0,0) circle(2pt);
\fill (0,2) circle(2pt);
\fill (2,0) circle(2pt);
\fill (2,2) circle(2pt);
\draw [dashed, -{Latex[width=1mm]}] (0,0) -- (0,2);
\draw [-{Latex[width=1mm]}] (2,0) -- (2,2);
\draw [dashed] (0,2) -- (2,2);
\draw (0,0) -- (2,0);
\node [above] at (1,2) {$E$};
\node [above] at (1,0) {$E$};
\node [left] at (0,1) {$R$};
\node [right] at (2,1) {$R$};
\end{tikzpicture}
\end{center}
\end{figure}
\end{definition}

Kripke semantics for $\sfour$ naturally extends to a relational semantics for $\ms$ by interpreting the modality $\forall$ in $\ms$-frames via the equivalence relation $E$. 
A \textit{valuation} on an $\ms$-frame $\G=(Y,R,E)$ is a map associating a subset of $Y$ to each propositional letter. Then, for each $x \in Y$ and formula $\varphi$ of $\mathcal{L}_{\Box \forall}$, we have
\begin{align*}
x \vDash_v \Box \varphi & \iff (\forall y \in Y) (x R y \Rightarrow y \vDash_v \varphi);\\
x \vDash_v \forall \varphi & \iff (\forall y \in Y) (x E y \Rightarrow y \vDash_v \varphi).
\end{align*}

Since both $\ms$ and $\mgrz$ have the fmp (for the fmp of $\ms$ see \cite[Sec.~6]{BC23a} and the references therein, and for the fmp of $\mgrz$ see \cite{BK24}), we obtain:

\begin{theorem}
$\ms$ and $\mgrz$ are Kripke complete. 
\end{theorem}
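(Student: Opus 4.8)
The plan is to deduce Kripke completeness directly from the finite model property recorded just above. Recall that a logic is Kripke complete with respect to a class of frames exactly when it coincides with the set of formulas valid on every frame in that class. This decomposes into soundness, that every theorem is frame-valid, and completeness, that every frame-valid formula is a theorem.

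First I would check soundness. Each axiom of $\ms$ is valid on every $\ms$-frame, and the rules of inference---modus ponens, substitution, $\Box$-necessitation, and $\forall$-necessitation---preserve validity on frames. Here the $\sfour$-axioms for $\Box$ reflect that $R$ is a quasi-order, the $\sfive$-axioms for $\forall$ reflect that $E$ is an equivalence relation, and the left commutativity axiom $\Box\forall p\to\forall\Box p$ reflects the compatibility condition between $E$ and $R$ in the definition of an $\ms$-frame; with these correspondences in hand the verification is routine. For $\mgrz$ one additionally verifies that $\grz$ is valid on every $\ms$-frame whose relation $R$ satisfies the Grzegorczyk condition, which is the standard computation for the Grzegorczyk axiom.

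Completeness I would prove by contraposition. Suppose $\varphi\notin\ms$. By the finite model property of $\ms$ cited above, $\varphi$ is refuted on some finite $\ms$-frame; since a finite $\ms$-frame is in particular an $\ms$-frame, $\varphi$ is not valid on the class of all $\ms$-frames. The identical argument with $\mgrz$ in place of $\ms$, using the finite model property of $\mgrz$, produces a finite $\ms$-frame validating $\grz$ on which $\varphi$ fails. Hence every frame-valid formula is a theorem, as required.

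The only substantive ingredient, the finite model property itself, is already furnished by the cited work---\cite[Sec.~6]{BC23a} for $\ms$ and \cite{BK24} for $\mgrz$---so I anticipate no real obstacle here: once the fmp is granted, Kripke completeness is a purely formal consequence. The single point demanding care is matching the semantic frame conditions to the axioms in the soundness step, particularly the interplay of $R$ and $E$ encoded by left commutativity.
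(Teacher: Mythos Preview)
Your proposal is correct and matches the paper's approach exactly: the paper simply records that the finite model property (cited from \cite[Sec.~6]{BC23a} and \cite{BK24}) yields Kripke completeness, without further elaboration. You have spelled out the routine soundness and contrapositive-completeness steps that the paper leaves implicit, but the substance is identical.
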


As with extensions of $\mipc$, there are extensions of $\ms$ that are Kripke incomplete. We thus work with descriptive $\ms$-frames.

\begin{definition}\plabel{def:descriptive ms-frame}
An $\ms$-frame $\mathfrak G=(Y,R,E)$ is a \emph{descriptive $\ms$-frame} if $Y$ is equipped with a Stone topology such that
\begin{enumerate}
\item\label[def:descriptive ms-frame]{def:descriptive ms-frame:item2} $R$ is a continuous quasi-order;
\item\label[def:descriptive ms-frame]{def:descriptive ms-frame:item3} $E$ is a continuous equivalence relation.
\end{enumerate}
\end{definition}

\begin{definition}\plabel{def:msfrm-morphisms}
Let $\G_1=(Y_1,R_1,E_1)$ and $\G_2=(Y_2,R_2,E_2)$ be descriptive $\ms$-frames. A map $f \colon Y_1 \to Y_2$ is a \emph{morphism of descriptive $\ms$-frames} if  
\begin{enumerate}
\item\label[def:msfrm-morphisms]{def:msfrm-morphisms:item1} $f$ is continuous;
\item\label[def:msfrm-morphisms]{def:msfrm-morphisms:item2} $R_2[f(x)]=fR_1[x]$ for each $x \in Y_1$;
\item\label[def:msfrm-morphisms]{def:msfrm-morphisms:item3} $E_2[f(x)]=fE_1[x]$ for each $x \in Y_1$.
\end{enumerate}
\end{definition}

\begin{remark}
Parts \eqref{def:msfrm-morphisms:item2} and \eqref{def:msfrm-morphisms:item3} of \cref{def:msfrm-morphisms} say that $f$ is a p-morphism with respect to both $R$ and $E$.
\end{remark}

\begin{definition}
    Let $\msfrm$ be the category of descriptive $\ms$-frames and their morphisms.
\end{definition}

\begin{remark}
It is straightforward to see that isomorphisms in $\msfrm$ are homeomorphisms that preserve and reflect $R$ and $E$.  
\end{remark}

\begin{convention}\label{conv:EcircR}
For an $\ms$-frame $(Y,R,E)$, we denote by $\Q$ the composition $E \circ R$ given by
\begin{equation*}
x \Q y \iff \exists z \in Y : x R z \text{ and } z E y.
\end{equation*}
\end{convention}

\begin{remark}\label{rem:MIPC and MS4 frames}
We briefly compare $\mipc$-frames and $\msfour$-frames. 
In an $\mipc$-frame $(X,R,Q)$ it is the quasi-order $Q$ that is primary and the equivalence relation $E_Q$ is defined from $Q$. On the other hand, in an $\msfour$-frame $(Y,R,E)$ it is the equivalence relation $E$ that is primary and the quasi-order $Q_E$ is defined from $E$ and $R$. 
Nevertheless, there is a close connection between $\mipc$-frames and $\ms$-frames.
Indeed, it follows from \cite[Sec.~2]{Bez99} that if $(X,R,Q)$ is an $\mipc$-frame, then $(X,R,E_Q)$ is an $\ms$-frame. Conversely, if $(Y,R,E)$ is an $\ms$-frame such that $R$ is a partial order,  
then $(Y,R,\Q)$ is an $\mipc$-frame such that $E \subseteq E_{\Q}$, but in general $E \ne E_{\Q}$ (see \cite[p.~24]{Bez99}). 
Therefore,  
this correspondence restricts to a bijection between $\mipc$-frames and those partially ordered $\ms$-frames in which $E = E_{\Q}$. 
Since 
every finite partially ordered $\ms$-frame is such (see \cite[Lem.~3(b)]{Bez99}), 
the correspondence further restricts to
a bijection between finite $\mipc$-frames and finite partially ordered $\ms$-frames. 
But this bijection does not extend to an equivalence of the corresponding categories 
(viewed as full subcategories of $\mipcfrm$ and $\msfrm$, respectively) because a morphism between finite $\mipc$-frames is not necessarily a morphism between the corresponding finite $\msfour$-frames. 
Furthermore, the bijection between finite $\mipc$-frames and finite partially ordered $\msfour$-frames does not extend to descriptive frames. Indeed, there is a descriptive $\mipc$-frame $(X,R,Q)$ such that $(X,R,E_Q)$ is not a descriptive $\ms$-frame (see \cite[p.~32]{Bez99}). As we pointed out in the introduction (see also \cref{sec: conclusions}), it is this lack of balance between descriptive $\mipc$-frames and descriptive $\msfour$-frames that will be responsible for the failure of the monadic version of Esakia's theorem. 
\end{remark}

The algebraic semantics for $\ms$ is given by monadic $\sfour$-algebras, and it is a consequence of J\'onsson-Tarski duality that descriptive $\ms$-frames are the duals of these (see, e.g., \cite[Thm.~3.11]{BC24a}). We thus obtain:

\begin{theorem}\label{thm:completeness descriptive MS4}
Each extension of $\ms$ is complete with respect to its class of descriptive $\ms$-frames.
\end{theorem}

In particular, $\mgrz$ is complete with respect to its descriptive frames, which we next recall.
\begin{definition} (see, e.g., \cite[Def.~1.4.9]{Esa19})
Let $R$ be a quasi-order on a set $X$ and $x \in X$.
\begin{enumerate}
\item We call $x$ \emph{maximal} if $xRy$ implies $x=y$. Let $\max X$ be the set of maximal points of $X$.
\item We call $x$ \emph{quasi-maximal} if $xRy$ implies $yRx$. Let $\qmax X$ be the set of quasi-maximal points of $X$.
\end{enumerate}
\end{definition}

While the descriptive frames we work with have multiple relations, when we talk about maximal or quasi-maximal points, 
we always mean with respect to the relation $R$.
Esakia's characterization of descriptive $\Grz$-frames \cite{Esa79} (see also \cite[Thm.~3.5.6]{Esa19}) yields the following characterization of descriptive $\mgrz$-frames. 

\begin{theorem}\label{thm:dual characterization mgrz}
Let $\G=(Y,R,E)$ be a descriptive $\ms$-frame. Then $\G$ validates $\mgrz$ iff $\qmax U = \max U$ for every clopen $U \subseteq Y$. In particular, a finite $\ms$-frame validates $\mgrz$ iff $R$ is a partial order.
\end{theorem}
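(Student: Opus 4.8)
The plan is to reduce validity of $\mgrz$ on $\G$ to validity of the single modal axiom $\grz$ on the $\sfour$-reduct $(Y,R)$, and then to invoke Esakia's characterization of descriptive $\Grz$-frames. Since $\G$ is a descriptive $\ms$-frame, it already validates $\ms$, and $\mgrz$ is generated from $\ms$ together with $\grz$ under rules (modus ponens, substitution, and the two necessitations) that preserve descriptive-frame validity. Hence $\G$ validates $\mgrz$ if and only if $\G$ validates $\grz$. The key observation is that $\grz$ lives in the $\Box$-fragment of $\Lba$: it contains no occurrence of $\forall$ (nor of $\exists$). Consequently the truth value of any substitution instance of $\grz$ at a point of $\G$ is computed entirely through $R$ and the valuation, and is completely insensitive to the relation $E$.

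Next I would note that the reduct $(Y,R)$ of a descriptive $\ms$-frame is precisely a descriptive $\sfour$-frame, since $Y$ is a Stone space and $R$ is a continuous quasi-order. Moreover, in both $\G$ and $(Y,R)$ the admissible valuations assign clopen subsets of $Y$ to the propositional letters, so the two frames share the same class of admissible valuations. Combining this with the fragment observation above, $\G$ validates $\grz$ if and only if the descriptive $\sfour$-frame $(Y,R)$ validates $\grz$. At this point I would apply Esakia's characterization of descriptive $\Grz$-frames to $(Y,R)$: such a frame validates $\grz$ iff $\qmax U = \max U$ for every clopen $U$. Chaining the equivalences yields that $\G$ validates $\mgrz$ iff $\qmax U = \max U$ for every clopen $U \subseteq Y$, which is the main claim.

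For the finite case, the Stone topology on a finite frame is discrete, so every subset of $Y$ is clopen and the condition becomes $\qmax U = \max U$ for all $U \subseteq Y$. If $R$ is a partial order this holds automatically, since antisymmetry forces every quasi-maximal point of $U$ to be maximal in $U$. Conversely, if $R$ is not antisymmetric, then there exist $x \neq y$ with $xRy$ and $yRx$; taking $U=\{x,y\}$, the point $x$ is quasi-maximal but not maximal in $U$, so $\qmax U \neq \max U$. Hence for finite frames the condition is equivalent to $R$ being a partial order.

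The substantive content here is Esakia's theorem, which we take as given; the only genuine step on our side is the observation that, because $\grz$ belongs to the $\Box$-fragment, validating $\mgrz$ on the bimodal frame $\G$ collapses to validating $\grz$ on its $\sfour$-reduct. The mildest point requiring care is confirming that the admissible (clopen) valuations coincide across the two frames, which is immediate since forgetting $E$ leaves the underlying Stone space, and hence its algebra of clopen sets, unchanged.
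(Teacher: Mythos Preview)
Your proposal is correct and follows exactly the route the paper indicates: the paper does not give a proof but simply states that Esakia's characterization of descriptive $\Grz$-frames ``yields'' the result, and your argument spells out precisely how---namely, that $\grz$ lies in the $\Box$-fragment, so validity of $\mgrz$ on $\G$ reduces to validity of $\grz$ on the descriptive $\sfour$-reduct $(Y,R)$, to which Esakia's theorem applies directly. Your treatment of the finite case is likewise standard and correct.
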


We recall (see, e.g., \cite[p.~96]{CZ97}) that the G\"odel translation $(-)^t$ of $\ipc$ into $\sf S4$ is defined by
\begin{align*}
& \bot^t = \bot\\
& p^t = \Box p \quad \mbox{for each propositional letter } p \\
& (\varphi \land \psi)^t = \varphi^t \land \psi^t \\
& (\varphi \lor \psi)^t = \varphi^t \lor \psi^t \\
& (\varphi \to \psi)^t = \Box (\neg \varphi^t \lor \psi^t).
\end{align*}

Fischer Servi \cite{FS77} (see also \cite{FS78a}) extended the G\"odel translation to a translation of $\mipc$ into $\ms$ as follows:
\begin{align*}
(\forall \varphi)^t &= \bbox \varphi^t \\
(\exists \varphi)^t &= \exists \varphi^t. 
\end{align*}

As a consequence of the fmp of $\mipc$, we have:

\begin{theorem}\cite{FS77,Esa88}\label{thm: MS4 modal comp of MIPC}\label{thm: MGrz modal comp of MIPC}
For each formula $\varphi$ of $\Lae$, 
\[
\mipc \vdash \varphi \iff \ms \vdash \varphi^t \iff \mgrz \vdash \varphi^t.
\]
\end{theorem}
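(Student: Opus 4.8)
The three conditions are linked by a cycle of implications. Since $\mgrz$ extends $\ms$, the step $\ms\vdash\varphi^t\Rightarrow\mgrz\vdash\varphi^t$ is immediate, so it remains to prove \emph{soundness}, $\mipc\vdash\varphi\Rightarrow\ms\vdash\varphi^t$, and \emph{faithfulness}, $\mgrz\vdash\varphi^t\Rightarrow\mipc\vdash\varphi$. Together with the trivial step these close the cycle and yield both equivalences.

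For soundness I would induct on the length of a derivation of $\varphi$ in $\mipc$, checking that the translation of each axiom is an $\ms$-theorem and that the rules of inference are preserved. The $\ipc$-axioms reduce to the McKinsey--Tarski embedding of $\ipc$ into $\sfour$. For the $\sfour$-axioms for $\forall$ I would invoke \cref{rem:master modality}: since $(\forall\varphi)^t=\bbox\varphi^t$ and $\bbox=\Box\forall$ is an $\sfour$-master modality, these translations follow from the $\sfour$-laws of $\bbox$. The $\sfive$-axioms for $\exists$ translate through the dual $\exists\varphi^t=\neg\forall\neg\varphi^t$ and the $\sfive$-laws of $\forall$ in $\ms$, while the connecting axioms require short $\ms$-computations in which the left commutativity axiom $\Box\forall p\to\forall\Box p$ is the key ingredient. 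Finally, modus ponens and substitution are preserved at once, and necessitation $\varphi/\forall\varphi$ is handled by combining $\Box$- and $\forall$-necessitation, since $(\forall\varphi)^t=\Box\forall\varphi^t$.

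For faithfulness I would argue contrapositively, using the finite model property of $\mipc$. If $\mipc\not\vdash\varphi$, fix a finite $\mipc$-frame $\F=(X,R,Q)$ with a valuation $v$ refuting $\varphi$ at some point. By \cref{rem:MIPC and MS4 frames}, $\G=(X,R,E_Q)$ is a finite $\ms$-frame, and since $R$ is a partial order, \cref{thm:dual characterization mgrz} shows that $\G$ validates $\mgrz$. Reading $v$ as a valuation on $\G$, I would prove the truth lemma $x\vDash_\F\psi\iff x\vDash_\G\psi^t$ for all $x\in X$ and all formulas $\psi$ by induction on $\psi$. The base case uses that $v(p)$ is an $R$-upset, so membership in $v(p)$ matches satisfaction of $\Box p$; the $\land,\lor,\to$ cases are the standard $\sfour$-computation; and the $\exists$ case is immediate, since $\exists$ is interpreted by $E_Q$ in both $\F$ and $\G$. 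With the lemma established, $\G$ refutes $\varphi^t$, whence $\mgrz\not\vdash\varphi^t$, as desired.

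The crux is the $\forall$-case of the truth lemma. Here $(\forall\psi)^t=\Box\forall\psi^t$ holds at $x$ exactly at the points reachable from $x$ via the composition $E_Q\circ R$ of \eqref{eq:EcircR}, whereas $\forall\psi$ holds at $x$ via $Q$; the case therefore rests on the frame identity $Q=E_Q\circ R$ rather than a mere inclusion. This is precisely where the $\mipc$-frame axioms enter: \cref{def:mipc-frame:item4} gives $Q\subseteq E_Q\circ R$, while \cref{def:mipc-frame:item3} together with $E_Q\subseteq Q$ and transitivity of $Q$ gives the reverse inclusion. It is also the reason the argument is carried out on finite frames, where the correspondence between $\mipc$-frames and partially ordered $\ms$-frames is a clean bijection and where $R$ being a partial order guarantees $\mgrz$-validity.
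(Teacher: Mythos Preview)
Your proposal is correct and follows essentially the same route the paper indicates: the theorem is stated as a cited result and introduced with ``As a consequence of the fmp of $\mipc$,'' which is exactly your strategy for the faithfulness direction. Your truth lemma $x\vDash_\F\psi\iff x\vDash_\G\psi^t$ is the finite, partially ordered special case of what the paper later packages as the skeleton correspondence $\sk\G\vDash\varphi\iff\G\vDash\varphi^t$ in \cref{thm:skeleton descriptive:item2}; your key identity $Q=E_Q\circ R$ is precisely what makes $\F$ isomorphic to $\sk\G$ when $\G=(X,R,E_Q)$.
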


The notions of a modal companion and the intuitionistic fragment (see, e.g., \cite[Sec.~9.6]{CZ97}) have obvious generalizations 
to the monadic setting:

\begin{definition}
Let $\L$ be an extension of $\mipc$ and $\M$ an extension of $\ms$. We say that $\M$ is a \emph{modal companion} of $\L$ and that $\L$ is the \emph{intuitionistic fragment} of $\M$ provided
\[
\L\vdash\varphi \iff \M\vdash\varphi^t
\]
for every formula $\varphi$ of $\Lae$.
\end{definition}

Using this terminology, \cref{thm: MS4 modal comp of MIPC} states that both $\ms$ and $\mgrz$ are modal companions of $\mipc$. Our aim is to show that $\mgrz$ is {\bf not} the greatest modal companion of $\mipc$.

\section{Global Kuroda logic}

In this and next sections we will introduce two extensions of $\ms$, which will be utilized in \cref{sec:failure Esakia} to prove our main result.
For this we will use the monadic version of the well-known Kuroda formula $\forall x \, \neg \neg P(x) \to \neg \neg \forall x \, P(x)$, which plays an important role in negation translations of predicate logics (see, e.g, \cite[Sec.~2.3]{TvD88} and \cite[Sec~2.12]{GSS09}). 

A semantic criterion of when the monadic version of Kuroda's formula is satisfied in descriptive $\mipc$-frames was developed in \cite{Bez00}. We refer to this condition as the Kuroda principle and show that there are two natural versions of it for descriptive $\ms$-frames, which we term the global and local Kuroda principles. These two principles give rise to two extensions of $\ms$. We provide an axiomatization of both and describe their connection to the monadic Kuroda logic. In this section we concentrate on the global Kuroda principle. The local Kuroda principle will be treated in the next section.

\begin{definition}
Let $\kur \coloneqq \forall \neg \neg p \to \neg \neg \forall p$ be the \emph{monadic Kuroda formula} and $\Kur\coloneqq\mipc+\kur$ the \emph{monadic Kuroda logic}.
\end{definition}

\begin{definition}\label{def:KP}
We say that a descriptive $\mipc$-frame $\mathfrak F=(X,R,Q)$ satisfies the \emph{Kuroda principle} ($\mathsf{KP}$) if 
\[
\forall x\in X\,(x \in \max X \Longrightarrow E_Q[x] \subseteq \max X);
\]
equivalently, $E_Q[\max X]=\max X$.
\end{definition}

\begin{theorem}{\cite[Lem.~37]{Bez00}}\label{thm:Kur EQ[max]=max}
A descriptive $\mipc$-frame validates $\Kur$ iff it satisfies $\KP$.
\end{theorem}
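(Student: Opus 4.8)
The plan is to prove both directions of the equivalence by relating validity of the Kuroda formula $\kur = \forall\neg\neg p \to \neg\neg\forall p$ in a descriptive $\mipc$-frame $\F = (X,R,Q)$ to the geometric condition $E_Q[\max X] = \max X$. Throughout I would exploit the fact that in a descriptive $\mipc$-frame formulas are interpreted as clopen $R$-upsets, and that the crucial behavior of $\neg\neg$ is controlled by maximal points: for an $R$-upset $U$, one has $x \vDash \neg\neg p$ iff every point $R$-above $x$ can still $R$-see a point of $U$. In a descriptive (hence ``image-finite enough'' via compactness) frame, the key fact is that $x \vDash \neg\neg p$ is equivalent to saying that $R[x] \cap \max X \subseteq U$, i.e. all maximal points above $x$ satisfy $p$. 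I would first record this lemma carefully, since it is the linchpin: a clopen $R$-upset $U$ satisfies $\max X \cap R[x] \subseteq U$ precisely when $x \vDash \neg\neg p$ under the valuation $v(p) = U$.

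For the direction $\KP \Rightarrow \Kur$, I would assume $E_Q[\max X] = \max X$ and show $x \vDash \forall\neg\neg p \to \neg\neg\forall p$ for every $x$ and every valuation. So suppose $x \vDash \forall\neg\neg p$, meaning every $y$ with $xQy$ satisfies $\neg\neg p$, i.e. (by the lemma) every maximal point $R$-above each such $y$ lies in $v(p)$. I want to conclude $x \vDash \neg\neg\forall p$, i.e. every maximal point $R$-above $x$ satisfies $\forall p$. Take $m \in \max X$ with $xRm$; I must show $m \vDash \forall p$, i.e. $m' \vDash p$ whenever $mQm'$. Here is where $\KP$ enters: since $m$ is maximal and $mQm'$, using \cref{def:mipc-frame:item4} I can find $z$ with $mRz$ and $zE_Q m'$; maximality of $m$ forces $z = m$, so $m E_Q m'$, and then $\KP$ gives $m' \in \max X$. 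Combining $xRm$, $m E_Q m'$ (hence $xQm'$ since $R \subseteq Q$ and $E_Q \subseteq Q$) with the hypothesis $x \vDash \forall\neg\neg p$ yields $m' \vDash \neg\neg p$; since $m'$ is itself maximal, $\neg\neg p$ collapses to $p$ at $m'$, giving $m' \vDash p$ as desired.

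For the contrapositive direction, I would assume $\KP$ fails, so there is a maximal point $m$ and a point $w$ with $m E_Q w$ but $w \notin \max X$. The goal is to build a valuation refuting $\kur$. The natural candidate is $v(p) = X \setminus R^{-1}[w]$ intersected appropriately to stay a clopen $R$-upset; more precisely I would take the clopen $R$-upset that excludes a maximal point strictly $R$-above $w$ while containing all maximal points not $E_Q$-linked through this configuration. Using the lemma, arranging that $w \not\vDash \neg\neg\forall p$ while every $Q$-successor of some base point still satisfies $\neg\neg p$ refutes the implication. The construction must use that $w$ is non-maximal to produce a point above $w$ at which $p$ fails, yet keep $\forall\neg\neg p$ true below.

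The main obstacle I anticipate is the refutation direction: producing a genuinely \emph{clopen} $R$-upset that separates the required points, since descriptive frames impose topological constraints and one cannot use arbitrary subsets. I expect to lean on the descriptive-frame axiom \cref{def:descriptive mipc-frame:item4} (that $Q[U]$ is a clopen $R$-upset for clopen $R$-upsets $U$) together with a compactness/prime-filter argument to extract a suitable clopen upset witnessing the failure, and on the characterization of $\neg\neg$ via maximal points, which itself relies on the descriptive frame having ``enough'' maximal points above each element.
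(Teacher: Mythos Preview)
The paper does not prove this theorem; it is quoted as \cite[Lem.~37]{Bez00} and used as a black box, so there is no proof in the present paper to compare yours against.

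That said, your outline is essentially the standard argument. The forward direction ($\KP \Rightarrow$ validity of $\kur$) is correct and complete as you wrote it. For the refutation direction your strategy is right but the concrete valuation is left vague, and your first guess $X \setminus R^{-1}[\{w\}]$ does not work: singletons in a Stone space are closed but generally not clopen, so $R^{-1}[\{w\}]$ need not be clopen. The clean construction is the one you gesture at in your final paragraph. Since $w \notin \max X$ and $\max X$ is a closed $R$-upset (the Fine--Esakia fact you mention), Esakia separation yields a clopen $R$-downset $D$ with $w \in D$ and $D \cap \max X = \varnothing$; set $U \coloneqq X \setminus D$. Then $U$ is a clopen $R$-upset with $\max X \subseteq U$ and $w \notin U$. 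Evaluate at the maximal point $m$ with $m E_Q w$: for every $y \in Q[m]$ one has $R[y] \cap \max X \subseteq \max X \subseteq U$, so $m \vDash \forall\neg\neg p$ by your $\neg\neg$-lemma; but $w \in Q[m] \setminus U$ gives $m \not\vDash \forall p$, and maximality of $m$ turns this into $m \not\vDash \neg\neg\forall p$. So the obstacle you anticipated is real, and it is resolved by separating $w$ from the closed set $\max X$ rather than from the singleton $\{w\}$.
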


$\KP$ has an obvious generalization to descriptive $\ms$-frames.

\begin{definition}
Let $\G=(Y,R,E)$ be a descriptive $\ms$-frame. We say that $\G$ satisfies the \emph{global Kuroda principle} ($\GKP$) if 
\[
\forall x \in Y\,(x\in\qmax Y \Longrightarrow E[x] \subseteq \qmax Y);
\]
equivalently, $E[\qmax Y] = \qmax Y$.
\end{definition}

We next recall the notion of the skeleton of a descriptive $\ms$-frame $\G$ and show that $\G$ satisfies $\GKP$ exactly when its skeleton satisfies $\KP$. 

\begin{definition}\label{def:skeleton}\cite[p.~439]{BBI23}
For an $\ms$-frame $\G=(Y,R,E)$, define its \emph{skeleton} $\sk\G=(X,R',Q')$ as follows. Let $X\coloneqq Y/E_R$ be the quotient of $Y$ by the equivalence relation $E_R$ on $Y$ induced by $R$ (see \cref{conv:EQ}), and let $\pi \colon Y \to X$ be the quotient map. Define $R'$ on $X$ by 
\[
\pi(x) R' \pi(y) \iff x R y.
\] 
Also, let $\Q=E \circ R$ (see \cref{conv:EcircR}), and define $Q'$ on $X$ by 
\[
\pi(x) Q' \pi(y) \iff x \Q y.
\]
\end{definition}

\begin{theorem}\plabel{thm:skeleton descriptive}
\hfill\begin{enumerate}
\item\label[thm:skeleton descriptive]{thm:skeleton descriptive:item1} \cite[Thm.~5.14]{BC24a} If $\G$ is a descriptive $\ms$-frame, then $\sk\G$ equipped with the quotient topology is a descriptive $\mipc$-frame. 
\item\label[thm:skeleton descriptive]{thm:skeleton descriptive:item1a} \cite[Lem.~5.15]{BC24a} The assignment $\G\mapsto\sk\G$ extends to a functor $\sk \colon \msfrm \to \mipcfrm$.
\item\label[thm:skeleton descriptive]{thm:skeleton descriptive:item2} \cite[Prop.~4.11(1)]{BC23a} Let $\varphi$ be a formula of $\Lae$ and $\G$ a \textup{(}descriptive\textup{)} $\ms$-frame. Then
$
\sk\G \vDash \varphi$ iff $\G \vDash \varphi^t.
$\footnote{For an equivalent algebraic formulation of this result see \cite[Thm.~5]{FS77}.}
\end{enumerate}
\end{theorem}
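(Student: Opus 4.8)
The plan is to treat the two parts separately, as both are recorded in the cited works and rest on the duality between descriptive $\ms$-frames and descriptive $\mipc$-frames.

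For part~(1), I would first verify that the order-theoretic data of $\sk\G$ are well defined on the classes $X=Y/E_R$ and satisfy the $\mipc$-frame axioms. Well-definedness of $R'$ and $Q'$ is routine from the transitivity of $R$ and the back condition of an $\ms$-frame, and antisymmetry of $R'$ is exactly what quotienting by $E_R$ buys; the inclusion $R'\subseteq Q'$ is immediate, since $x\mathrel{R}y$ and $y\mathrel{E}y$ give $x\mathrel{Q}y$. The only axiom that takes real work is transitivity of $Q=E\circ R$: given $x\mathrel{R}u\mathrel{E}y$ and $y\mathrel{R}w\mathrel{E}z$, I would apply the $\ms$-frame back condition to $u\mathrel{E}y\mathrel{R}w$ to produce $v$ with $u\mathrel{R}v\mathrel{E}w$, so that $x\mathrel{R}v\mathrel{E}z$ witnesses $x\mathrel{Q}z$. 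The mixing axiom is then obtained by taking the witness $z$ of $x\mathrel{R}z\mathrel{E}y$, since $z\mathrel{E}y$ forces $z\mathrel{E_Q}y$.

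The hard part of part~(1) is topological: I must show that $X=Y/E_R$ carries a Stone topology, that $R'$ and $Q'$ are continuous, and that $Q'$ sends clopen $R'$-upsets to clopen $R'$-upsets. This is where the descriptive (as opposed to merely Kripke) hypotheses are indispensable, and the cleanest route is dual. Namely, $E_R$ is a closed equivalence relation for which the quotient is again a Stone space, so $\sk\G$ is exactly the descriptive $\mipc$-frame dual to the monadic Heyting algebra of open elements of the monadic $\sfour$-algebra dual to $\G$, and the continuity and clopen-upset conditions transcribe the algebraic operations $\Box$ and $\forall$. I expect this duality-theoretic bookkeeping to be the main obstacle, and it is carried out in \cite[Thm.~5.14]{BC24a}.

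For part~(2), I would argue by induction on $\varphi$, pairing a valuation $v$ on $\sk\G$ with the valuation $v^\ast$ on $\G$ given by $v^\ast(p)=\pi^{-1}(v(p))$ and proving the pointwise equivalence $\pi(x)\vDash_v\varphi\iff x\vDash_{v^\ast}\varphi^t$ for all $x\in Y$; validity then follows by ranging over valuations. The base case uses $p^t=\Box p$ and that $v^\ast(p)$ is constant on each $E_R$-cluster; $\land$ and $\lor$ are immediate; and $(\varphi\to\psi)^t=\Box(\neg\varphi^t\vee\psi^t)$ reproduces intuitionistic implication as the $R$-interior. For $\forall$, the clause $(\forall\varphi)^t=\bbox\varphi^t=\Box\forall\varphi^t$ unwinds to quantification over $\{z:x\mathrel{Q}z\}$, matching $\pi(x)\vDash\forall\varphi$ via $\pi(x)\mathrel{Q'}\pi(z)\iff x\mathrel{Q}z$. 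The one delicate case will be $\exists$: here $(\exists\varphi)^t=\exists\varphi^t$ quantifies along $E$, while $\pi(x)\vDash\exists\varphi$ quantifies along $E_{Q'}$, which lifts to $E_Q\supseteq E$. To reconcile them I would first check, by a parallel induction, that the truth set of every translated formula is an $R$-upset, and then convert an $E_Q$-witness $z$ of $\varphi^t$ into an $E$-witness: from $z\mathrel{E_Q}x$ one has $z\mathrel{Q}x$, hence some $u$ with $z\mathrel{R}u\mathrel{E}x$, and $R$-upset-ness gives $u\vDash_{v^\ast}\varphi^t$ with $x\mathrel{E}u$. This completes the induction, following \cite[Prop.~4.11(1)]{BC23a}.
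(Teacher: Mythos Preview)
The paper itself gives no proof of this theorem: both items are quoted from \cite{BC24a} and \cite{BC23a} (with an algebraic antecedent in \cite{FS77}), so there is nothing in the present paper to compare your argument against line by line. Your outline is essentially the standard one found in those references, and the order-theoretic verifications you describe for part~(1), as well as the formula induction for part~(2) including the delicate $\exists$-case via the $R$-upset property of translated truth sets, are correct.

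There is one point in part~(2) that you pass over with ``validity then follows by ranging over valuations'' and that deserves a sentence. Your pointwise equivalence $\pi(x)\vDash_v\varphi\iff x\vDash_{v^\ast}\varphi^t$ only ranges over valuations on $\G$ of the special form $v^\ast=\pi^{-1}\circ v$, which are exactly the $E_R$-saturated ones. This immediately gives $\G\vDash\varphi^t\Rightarrow\sk\G\vDash\varphi$, but for the converse you must handle an arbitrary valuation $w$ on $\G$. The fix is routine: replace $w$ by $w'(p)\coloneqq\Box_R w(p)$, note that $w'(p)$ is $E_R$-saturated and hence equals $v^\ast(p)$ for some $v$ on $\sk\G$, and observe that since every occurrence of $p$ in $\varphi^t$ is inside a $\Box$, the idempotence $\Box_R\Box_R=\Box_R$ gives $x\vDash_w\varphi^t\iff x\vDash_{w'}\varphi^t$. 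With that one extra step your sketch is complete.
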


We recall that each extension $\sf L$ of $\ipc$ has a least modal companion $\tau\sf L$ (see, e.g., \cite[Cor.~9.58]{CZ97}). We generalize the definition of $\tau$ to the monadic setting.

\begin{definition}
For an extension $\L$ of $\mipc$, let $\tau\L$ be the extension of $\ms$ defined by
\[
\tau\L \coloneqq  \ms + \{\varphi^t : \L \vdash \varphi \}.
\] 
\end{definition}

The following is an immediate consequence of \cref{thm:skeleton descriptive:item2}.

\begin{corollary}\label{cor:validity logics skeleton}
Let $\L$ be an extension of $\mipc$ and $\G$ a \textup{(}descriptive\textup{)} $\ms$-frame. Then 
\[
\sk\G \vDash \L \iff \G \vDash \tau\L.
\]
\end{corollary}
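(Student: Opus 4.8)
The plan is to lift the single-formula equivalence of \cref{thm:skeleton descriptive:item2} to entire logics, treating the two implications separately. I would begin by unpacking the two sides: $\sk\G \vDash \L$ asserts that $\sk\G$ validates every theorem of $\L$, while $\G \vDash \tau\L$ asserts that $\G$ validates every member of $\tau\L = \ms + \{\varphi^t : \L \vdash \varphi\}$. Both directions rest on the observation that for every $\varphi$ with $\L \vdash \varphi$, the formula $\varphi^t$ is a designated generator of $\tau\L$.

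The implication $\G \vDash \tau\L \Rightarrow \sk\G \vDash \L$ I expect to be completely immediate. Given any $\varphi$ with $\L \vdash \varphi$, the generator $\varphi^t$ lies in $\tau\L$, so $\G \vDash \varphi^t$; applying \cref{thm:skeleton descriptive:item2} to $\G$ and $\varphi$ then yields $\sk\G \vDash \varphi$. As $\varphi$ ranges over all theorems of $\L$, this gives $\sk\G \vDash \L$.

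For the converse $\sk\G \vDash \L \Rightarrow \G \vDash \tau\L$, the subtlety is that $\tau\L$ is the closure of $\{\varphi^t : \L \vdash \varphi\}$ under $\ms$ and the inference rules, so validating the generators is not literally the same as validating $\tau\L$. I would handle this by recalling that the set of $\Lba$-formulas valid on the $\ms$-frame $\G$ is itself an extension of $\ms$: $\G$ validates $\ms$ by soundness, and validity on $\G$ is preserved under modus ponens, substitution, and the two necessitation rules. For each generator $\varphi^t$ (with $\L \vdash \varphi$) the hypothesis gives $\sk\G \vDash \varphi$, hence $\G \vDash \varphi^t$ by \cref{thm:skeleton descriptive:item2}. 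Thus $\G$ validates every generator of $\tau\L$, and since these valid formulas form an extension of $\ms$ while $\tau\L$ is by definition the smallest such extension containing the generators, we obtain $\G \vDash \tau\L$.

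The only step requiring any care --- and hence the main, if mild, obstacle --- is the closure argument just described: one needs that the formulas valid on a (descriptive) $\ms$-frame are closed under the inference rules defining extensions of $\ms$, so that validity propagates from the generating set $\{\varphi^t\}$ to all of $\tau\L$. This is the standard soundness of (descriptive) $\ms$-frames, and with it both implications reduce directly to \cref{thm:skeleton descriptive:item2}.
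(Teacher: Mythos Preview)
Your proposal is correct and is precisely the argument the paper has in mind: the corollary is stated there without proof as ``an immediate consequence of \cref{thm:skeleton descriptive:item2},'' and what you have written is exactly the standard unpacking of that immediacy, including the one nontrivial point that the formulas valid on a (descriptive) $\ms$-frame form an extension of $\ms$, so validating the generators $\{\varphi^t : \L \vdash \varphi\}$ suffices for validating all of $\tau\L$.
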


\begin{proposition}\label{prop:tau and sigma on Gamma}
Let $\Gamma$ be a set of formulas in 
$\Lae$. Then 
\[
\tau(\mipc+\Gamma)=\ms+\{\gamma^t : \gamma \in \Gamma\}.
\]
\end{proposition}

\begin{proof}
Let $\G$ be a descriptive $\ms$-frame. \cref{cor:validity logics skeleton,thm:skeleton descriptive:item2} imply that
\begin{align*}
\G \vDash \tau(\mipc+\Gamma) \iff \sk\G \vDash \mipc+\Gamma \iff 
\G \vDash \ms+\{\gamma^t : \gamma \in \Gamma\}.
\end{align*}
By \cref{thm:completeness descriptive MS4}, every extension of $\ms$ is complete with respect to its class of descriptive $\ms$-frames. Thus,
$\tau(\mipc+\Gamma)=\ms+\{\gamma^t : \gamma \in \Gamma\}$.
\end{proof}

\begin{remark}\label{rem: tauL modal comp}
If $\sf L$ is an extension of $\mipc$, then it remains open whether $\tau\sf L$ is a modal companion of $\sf L$.\footnote{
For similar issues in the predicate case see~\cite[Rem.~2.11.13]{GSS09}.}
The main issue here is the absence of a natural functor from $\mipcfrm$ to $\msfrm$ that would allow to show that the functor $\sk \colon \msfrm \to \mipcfrm$ is essentially surjective (that is, every descriptive $\mipc$-frame is isomorphic to the skeleton of some descriptive $\ms$-frame). This is caused by the discrepancy between descriptive $\mipc$-frames and descriptive $\ms$-frames discussed in \cref{rem:MIPC and MS4 frames}.    
\end{remark}

Let $\L$ be an extension of $\mipc$. While we don't know whether $\tau\L$ is a modal companion of $\L$, we show that this is indeed the case provided $\L$ is Kripke complete.

\begin{proposition}\label{prop:Kripke complete admit modal comp}
Let $\L$ be a Kripke complete extension of $\mipc$. Then $\tau \L$ is a modal companion of $\L$.
\end{proposition}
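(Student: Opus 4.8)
The plan is to prove both directions of the biconditional $\L \vdash \varphi \iff \tau\L \vdash \varphi^t$ whose holding (for every $\varphi$ in $\Lae$) is exactly the assertion that $\tau\L$ is a modal companion of $\L$. The left-to-right implication is immediate and needs no completeness hypothesis: if $\L \vdash \varphi$, then $\varphi^t$ is one of the formulas generating $\tau\L$ over $\ms$, so $\tau\L \vdash \varphi^t$ by definition of $\tau\L$. All the work lies in the converse, which I would establish in contrapositive form: assuming $\L \not\vdash \varphi$, I will exhibit an $\ms$-frame that validates $\tau\L$ but refutes $\varphi^t$.

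Since $\L$ is Kripke complete, from $\L \not\vdash \varphi$ I obtain a Kripke $\mipc$-frame $\F = (X,R,Q)$ with $\F \vDash \L$ and $\F \not\vDash \varphi$. The idea is to lift $\F$ to the Kripke $\ms$-frame $\G = (X, R, E_Q)$, which is an $\ms$-frame by \cref{rem:MIPC and MS4 frames}, and whose skeleton recovers $\F$. Granting $\sk\G = \F$, the conclusion follows at once: by \cref{cor:validity logics skeleton}, $\sk\G = \F \vDash \L$ yields $\G \vDash \tau\L$; and by \cref{thm:skeleton descriptive:item2}, $\sk\G = \F \not\vDash \varphi$ yields $\G \not\vDash \varphi^t$. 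Since $\G \vDash \tau\L$ while $\G \not\vDash \varphi^t$, the formula $\varphi^t$ cannot lie in $\tau\L$, i.e.\ $\tau\L \not\vdash \varphi^t$, as required.

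It thus remains to verify $\sk\G = \F$. Because $R$ is a partial order on $X$ (by \cref{def:mipc-frame:item1}), the relation $E_R$ is the identity, so the quotient map $\pi \colon X \to X/E_R$ is, up to the canonical identification, the identity and $R' = R$. The only point needing care is that the second skeleton relation $Q' = E_Q \circ R$ coincides with $Q$. One inclusion is precisely \cref{def:mipc-frame:item4}, which says that $xQy$ forces some $z$ with $xRz$ and $z E_Q y$, giving $Q \subseteq E_Q \circ R$; the reverse inclusion uses $R \subseteq Q$ (\cref{def:mipc-frame:item3}) together with transitivity of $Q$, since $xRz$ and $z E_Q y$ yield $xQz$ and $zQy$, hence $xQy$. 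Therefore $E_Q \circ R = Q$ and $\sk\G = \F$.

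The step to get right — and the reason the Kripke completeness hypothesis is imposed — is exactly this lifting. For a general \emph{descriptive} $\mipc$-frame $\F$, the tuple $(X, R, E_Q)$ need not be a descriptive $\ms$-frame (as recalled in \cref{rem:MIPC and MS4 frames}, citing \cite[p.~32]{Bez99}), so the construction above would not produce an admissible frame and the argument would collapse. Passing through Kripke frames removes this obstruction, since $(X, R, E_Q)$ is always a Kripke $\ms$-frame, and \cref{cor:validity logics skeleton,thm:skeleton descriptive:item2} apply equally to Kripke and descriptive frames. I expect the verification $\sk\G = \F$ to be the only genuinely computational part, with everything else being a direct application of the skeleton correspondence.
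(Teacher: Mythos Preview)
Your proposal is correct and follows essentially the same approach as the paper: both argue the nontrivial direction by contraposition, pass to a Kripke $\mipc$-frame $\F$ refuting $\varphi$, lift to the $\ms$-frame $\G=(X,R,E_Q)$, and then invoke \cref{cor:validity logics skeleton} and \cref{thm:skeleton descriptive:item2} via $\sk\G\cong\F$. You actually provide more detail than the paper, which simply cites \cref{rem:MIPC and MS4 frames} for the isomorphism $\sk\G\cong\F$, whereas you spell out the verification that $E_Q\circ R=Q$ using \cref{def:mipc-frame:item3,def:mipc-frame:item4}; your concluding paragraph on why Kripke completeness is essential is likewise an elaboration rather than a departure.
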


\begin{proof}
It follows from the definition of $\tau\L$ that $\L \vdash \varphi$ implies $\tau\L \vdash \varphi^t$ for every formula $\varphi$ of $\Lae$. To prove the reverse implication, suppose that $\L \nvdash \varphi$. Since $\L$ is Kripke complete, there is an $\mipc$-frame $\F=(X,R,Q)$ such that $\F \vDash \L$ and $\F \nvDash \varphi$. By \cref{rem:MIPC and MS4 frames}, $\G=(X,R,E_Q)$ is an $\ms$-frame such that $\sk\G$ is isomorphic to $\F$. Therefore, $\G \vDash \tau \L$ by \cref{cor:validity logics skeleton}, and $\G \nvDash \varphi^t$ by \cref{thm:skeleton descriptive:item2}. Thus, $\tau\L \nvdash \varphi^t$.
\end{proof}

Returning to $\GKP$, we have:

\begin{lemma}\plabel{lem:clean for max}
Let $\G=(Y,R,E)$ be a descriptive $\ms$-frame.
\begin{enumerate}
\item\label[lem:clean for max]{lem:clean for max:item1} $E[\qmax Y] = E_{\Q}[\qmax Y]$.
\item\label[lem:clean for max]{lem:clean for max:item2} $\G$ satisfies $\GKP$ iff 
\[
\forall x \in Y\,(x\in\qmax Y \Longrightarrow E_{\Q}[x] \subseteq \qmax Y)
\]
$($equivalently, $E_{\Q}[\qmax Y] = \qmax Y)$.
\end{enumerate}
\end{lemma}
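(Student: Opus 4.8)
The plan is to deduce both statements from a single observation: on quasi-maximal points the relations $E$ and $E_Q$ have the same saturation. First I would record the easy inclusion $E \subseteq E_Q$. Indeed, if $x E y$, then taking $z = x$ witnesses $x Q y$ (here $x R x$ by reflexivity of $R$ and $z E y$ since $z = x$), and symmetrically $y Q x$, so $x E_Q y$. This immediately gives $E[\qmax Y] \subseteq E_Q[\qmax Y]$, which is one half of part (1).

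For the reverse inclusion $E_Q[\qmax Y] \subseteq E[\qmax Y]$ — which I expect to be the only substantive point — I would start from $y \in E_Q[\qmax Y]$, so there is some $x \in \qmax Y$ with $x E_Q y$; in particular $x Q y$, and unwinding $Q = E \circ R$ produces a point $z \in Y$ with $x R z$ and $z E y$. The key claim is that $z \in \qmax Y$. This follows from the quasi-maximality of $x$ together with transitivity of $R$: if $z R w$, then $x R z$ and $z R w$ give $x R w$, quasi-maximality of $x$ yields $w R x$, and then $w R x$ with $x R z$ gives $w R z$; hence $z$ is quasi-maximal. Since $z E y$ and $z \in \qmax Y$, we conclude $y \in E[\qmax Y]$. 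Combined with the previous paragraph, this establishes part (1).

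For part (2), I would simply combine part (1) with the defining form of $\GKP$. By reflexivity of $E$ (respectively $E_Q$) we always have $\qmax Y \subseteq E[\qmax Y]$ and $\qmax Y \subseteq E_Q[\qmax Y]$, so each of the equalities $E[\qmax Y] = \qmax Y$ and $E_Q[\qmax Y] = \qmax Y$ is equivalent to the corresponding containment condition $x \in \qmax Y \Rightarrow E[x] \subseteq \qmax Y$ (respectively with $E_Q$ in place of $E$). Since $\GKP$ is by definition the condition $E[\qmax Y] = \qmax Y$, part (1) gives $E[\qmax Y] = E_Q[\qmax Y]$, and hence the two reformulations coincide.

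The only genuine obstacle is the verification, inside part (1), that the intermediate point $z$ is itself quasi-maximal; everything else is a formal consequence of the inclusion $E \subseteq E_Q$ and the reflexivity of the two equivalence relations.
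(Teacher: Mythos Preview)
Your proof is correct and follows essentially the same route as the paper's: use $E \subseteq E_Q$ for one inclusion, and for the other unwind $xQy$ to find $z$ with $xRz$, $zEy$, observe that $z$ inherits quasi-maximality from $x$, and then derive part~(2) directly from part~(1). You spell out a couple of steps (the proof of $E \subseteq E_Q$ and the verification that $z\in\qmax Y$) that the paper leaves implicit, but the argument is the same.
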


\begin{proof}
\eqref{lem:clean for max:item1}.
The left-to-right inclusion is clear because $E \subseteq E_{\Q}$ (see \cref{rem:MIPC and MS4 frames}). For the other inclusion, let $x \in \qmax Y$, $y \in Y$, and $xE_{\Q}y$. Then $x \Q y$, so 
there is $z \in Y$ with $xRz$ and $zEy$. Since $x \in \qmax Y$ and $xRz$, 
we have $z \in \qmax Y$. Thus, 
$y \in E[\qmax Y]$.

\eqref{lem:clean for max:item2}. This is immediate from (\ref{lem:clean for max:item1}).
\end{proof}

\begin{lemma}\plabel{lem: pi max and clean}
Let $\G=(Y,R,E)$ be a descriptive $\ms$-frame,
$\sk\G = (X,R',Q')$, and ${\pi \colon Y \to X}$ be the quotient map.
\begin{enumerate}
\item \label[lem: pi max and clean]{lem: pi max and clean:item1} 
$\pi^{-1}[\max X]=\qmax Y$.
\item \label[lem: pi max and clean]{lem: pi max and clean:item1a}
$\pi^{-1}[E_{Q'}[A]]=E_{\Q}[\pi^{-1}[A]]$ for $A \subseteq X$.
\item \label[lem: pi max and clean]{lem: pi max and clean:item2}
$\pi^{-1}[E_{Q'}[\max X]]=E_{\Q}[\qmax Y]$.
\end{enumerate}
\end{lemma}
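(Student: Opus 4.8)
The plan is to observe that item~(3) is an immediate consequence of items~(1) and~(2): applying~(2) with $A=\max X$ gives $\pi^{-1}[E_{Q'}[\max X]]=E_Q[\pi^{-1}[\max X]]$, and substituting $\pi^{-1}[\max X]=\qmax Y$ from~(1) yields the claim. So the real content lies in~(1) and~(2), each of which is obtained by unwinding the definition of the skeleton $\sk\G$ and transporting the relations $R$ and $Q$ through the quotient map $\pi$.

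For~(1), I would compute directly when a point $\pi(x)\in X$ is $R'$-maximal. By the definition of $R'$ we have $\pi(x)R'\pi(y)\iff xRy$, and since $X=Y/E_R$ we have $\pi(x)=\pi(y)\iff xE_Ry\iff(xRy\text{ and }yRx)$. Hence $\pi(x)\in\max X$ iff $xRy$ implies $yRx$ for all $y\in Y$, which is exactly the condition defining $x\in\qmax Y$. As $\pi$ is surjective, this gives $\pi^{-1}[\max X]=\qmax Y$.

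For~(2), the key observation is that $E_{Q'}$ on $X$ pulls back to $E_Q$ on $Y$ along $\pi$: from $\pi(x)Q'\pi(y)\iff xQy$ (the definition of $Q'$) together with the definition of the relations $E_{Q'}$ and $E_Q$ via~\eqref{eq:EQ}, we obtain $\pi(x)E_{Q'}\pi(y)\iff(xQy\text{ and }yQx)\iff xE_Qy$. With this in hand the set equality is a routine double inclusion. For $\subseteq$, if $\pi(y)\in E_{Q'}[A]$ pick $a\in A$ with $aE_{Q'}\pi(y)$; using surjectivity of $\pi$ choose $x$ with $\pi(x)=a$, so that $x\in\pi^{-1}[A]$ and $xE_Qy$, whence $y\in E_Q[\pi^{-1}[A]]$. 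For $\supseteq$, if $xE_Qy$ with $\pi(x)\in A$, then $\pi(x)E_{Q'}\pi(y)$ shows $\pi(y)\in E_{Q'}[A]$, i.e.\ $y\in\pi^{-1}[E_{Q'}[A]]$.

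There is no serious obstacle here; the lemma is essentially a bookkeeping exercise in pushing the relations through the quotient. The only points requiring a little care are recognizing that $R'$-maximality in the skeleton corresponds precisely to $R$-quasi-maximality upstairs (this is where passing to the quotient does its work in~(1)), and using the surjectivity of $\pi$ in the forward inclusion of~(2) to lift a representative of $A$ to a point of $\pi^{-1}[A]$. The well-definedness of $R'$ and $Q'$ is already built into the definition of $\sk\G$, so it may be used freely.
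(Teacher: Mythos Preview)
Your proof is correct and follows essentially the same approach as the paper: for (1) you unwind $R'$-maximality in the skeleton to $R$-quasi-maximality in $Y$, for (2) you use the key observation $\pi(x)E_{Q'}\pi(y)\iff xE_Qy$ (the paper then reduces to singletons and takes unions, whereas you do an equivalent direct double inclusion), and (3) is derived from (1) and (2) exactly as in the paper.
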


\begin{proof}
\eqref{lem: pi max and clean:item1}. Let $y \in \pi^{-1}[\max X]$ and $yRz$ for some $z \in Y$. Then $\pi(y)R'\pi(z)$, and hence $\pi(y)=\pi(z)$ because $\pi(y) \in \max X$. 
Therefore, $z R y$, and hence $y \in \qmax Y$. Conversely, suppose that $y \in \qmax Y$ and $\pi(y) R' \pi(z)$ for some $z \in Y$. Then $yRz$, and so $zRy$ because $y \in \qmax Y$. 
Thus, $y E_R z$, which yields $\pi(y)=\pi(z)$. Consequently, $\pi(y) \in \max X$, and hence $y \in \pi^{-1}[\max X]$.

\eqref{lem: pi max and clean:item1a}. 
By definition of $Q'$, for each $y,z \in Y$,
\[
\pi(y) E_{Q'} \pi(z) \iff y E_{\Q} z.
\]
Therefore, $\pi^{-1}[E_{Q'}[x]]=E_{\Q}[\pi^{-1}[x]]$ for each $x \in X$. The result follows since $\pi^{-1}$, $E_{Q'}$, and $E_{\Q}$ commute with set-theoretic union.

\eqref{lem: pi max and clean:item2}.
Applying \eqref{lem: pi max and clean:item1a} and \eqref{lem: pi max and clean:item1},
\[
\pi^{-1}[E_{Q'}[\max X]]=E_{\Q}[\pi^{-1}[\max X]] = E_{\Q}[\qmax Y].\qedhere 
\]
\end{proof}

\begin{proposition}\plabel{thm:kurbox frame conditions}
A descriptive $\ms$-frame $\G$ satisfies $\GKP$ iff $\sk\G$ satisfies $\KP$.
\end{proposition}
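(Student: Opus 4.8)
The plan is to translate both conditions into statements about preimages under the quotient map $\pi \colon Y \to X$ and then exploit the surjectivity of $\pi$, after which everything reduces to the two preparatory lemmas. First I would rewrite each side of the desired equivalence. By \cref{lem:clean for max:item2}, the frame $\G=(Y,R,E)$ satisfies $\GKP$ if and only if $E_Q[\qmax Y] = \qmax Y$, where $Q = E \circ R$. On the other side, by the very definition of $\KP$ (\cref{def:KP}) applied to $\sk\G = (X,R',Q')$, the skeleton satisfies $\KP$ if and only if $E_{Q'}[\max X] = \max X$. Thus it suffices to prove that $E_Q[\qmax Y] = \qmax Y$ holds exactly when $E_{Q'}[\max X] = \max X$ does.

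Next I would observe that, since $\pi$ is a quotient map and hence surjective, the operation $\pi^{-1}[-]$ is injective on subsets of $X$: for $A, B \subseteq X$ we have $A = B$ if and only if $\pi^{-1}[A] = \pi^{-1}[B]$. Consequently, the equality $E_{Q'}[\max X] = \max X$ is equivalent to $\pi^{-1}[E_{Q'}[\max X]] = \pi^{-1}[\max X]$. This is the only place where surjectivity of the quotient map is used, and it is the conceptual crux, though it is entirely routine.

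Finally I would substitute the identities supplied by \cref{lem: pi max and clean}. By \cref{lem: pi max and clean:item2} we have $\pi^{-1}[E_{Q'}[\max X]] = E_Q[\qmax Y]$, and by \cref{lem: pi max and clean:item1} we have $\pi^{-1}[\max X] = \qmax Y$. Plugging these into the displayed equality of preimages turns it into $E_Q[\qmax Y] = \qmax Y$, which by the first paragraph is precisely $\GKP$. Chaining the equivalences yields $\GKP \iff \sk\G \text{ satisfies } \KP$.

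I do not expect any genuine obstacle here: the substantive work has already been carried out in \cref{lem:clean for max} (comparing $E$ and $E_Q$ on quasi-maximal points) and in \cref{lem: pi max and clean} (the compatibility of $\pi^{-1}$ with the max/quasi-max sets and with $E_{Q'}$ versus $E_Q$). What remains is bookkeeping, and the main point worth stating carefully is the injectivity of $\pi^{-1}[-]$ coming from surjectivity of $\pi$, since that is what licenses passing the equality of subsets of $X$ down to an equality of their preimages in $Y$.
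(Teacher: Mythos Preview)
Your proposal is correct and follows essentially the same approach as the paper's own proof: both reduce the equivalence to showing $E_Q[\qmax Y]=\qmax Y \iff E_{Q'}[\max X]=\max X$ via \cref{lem:clean for max:item2} and \cref{def:KP}, then invoke \cref{lem: pi max and clean} together with the surjectivity of $\pi$ to pass between the two equalities. The only difference is expository---you spell out the injectivity of $\pi^{-1}[-]$ a bit more explicitly---but the logical structure is identical.
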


\begin{proof}
By \cref{lem:clean for max:item2}, $\G$ satisfies $\GKP$ iff $\qmax Y = E_{\Q}[\qmax Y]$. On the other hand, by \cref{def:KP}, $\sk\G=(X,R', Q')$ satisfies $\KP$ iff $E_{Q'}[\max X] = \max X$. Thus, it is sufficient to show that $E_{\Q}[\qmax Y]=\qmax Y$ iff $E_{Q'}[\max X] = \max X$.
Let $\pi \colon Y \to X$ be the quotient map. We have
\[
E_{\Q}[\qmax Y]=\qmax Y \iff \pi^{-1}[E_{Q'}[\max X]]=\pi^{-1}[\max X] \iff E_{Q'}[\max X] = \max X,
\]
where the first equivalence follows from \cref{lem: pi max and clean}
and the second 
holds because $\pi$ is onto.
\end{proof}

\begin{definition}
Let $\GKur=\tau(\Kur)$. We call $\GKur$ the \emph{global Kuroda logic}.
\end{definition}

Since $\Kur=\mipc +\kur$, as an immediate consequence of \cref{prop:tau and sigma on Gamma} we obtain:

\begin{proposition}\label{prop:GKur ms+kurt}
$\GKur=\ms + \kur^t$.
\end{proposition}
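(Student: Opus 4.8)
The plan is to prove the two logics equal by showing that they are sound and complete with respect to the \emph{same} class of descriptive $\ms$-frames, namely those satisfying $\GKP$; the equality then follows because every extension of $\ms$ is complete with respect to its class of descriptive $\ms$-frames. Concretely, I would first compute the descriptive frame class of each side, check that both classes coincide with $\{\G : \G \text{ satisfies } \GKP\}$, and then invoke completeness.

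First I would identify the descriptive $\ms$-frames validating $\ms + \kur^t$. Since every descriptive $\ms$-frame validates $\ms$, such a frame $\G$ validates $\ms+\kur^t$ iff it validates $\kur^t$. By \cref{thm:skeleton descriptive:item2} (with $\varphi=\kur$), $\G \vDash \kur^t$ iff $\sk\G \vDash \kur$. As $\sk\G$ is a genuine descriptive $\mipc$-frame by \cref{thm:skeleton descriptive:item1}, it validates $\mipc$ for free, so $\sk\G \vDash \kur$ iff $\sk\G \vDash \Kur$. By \cref{thm:Kur EQ[max]=max} this holds iff $\sk\G$ satisfies $\KP$, and by \cref{thm:kurbox frame conditions} that is equivalent to $\G$ satisfying $\GKP$. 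Chaining these equivalences gives: $\G \vDash \ms + \kur^t$ iff $\G$ satisfies $\GKP$.

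Next I would identify the descriptive $\ms$-frames validating $\GKur = \tau(\Kur)$. By \cref{cor:validity logics skeleton} (with $\L = \Kur$), $\G \vDash \tau(\Kur)$ iff $\sk\G \vDash \Kur$, which by \cref{thm:Kur EQ[max]=max} and \cref{thm:kurbox frame conditions} is again equivalent to $\G$ satisfying $\GKP$. Hence $\GKur$ and $\ms+\kur^t$ are validated by exactly the same descriptive $\ms$-frames. Since every extension of $\ms$ is complete with respect to its class of descriptive $\ms$-frames, a formula is a theorem of either logic iff it holds on all descriptive $\ms$-frames satisfying $\GKP$, iff it is a theorem of the other; therefore $\GKur = \ms + \kur^t$.

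I do not anticipate a serious obstacle: the real work has already been carried out in \cref{thm:Kur EQ[max]=max} (relating $\Kur$-validity on descriptive $\mipc$-frames to $\KP$) and in \cref{thm:kurbox frame conditions} (transferring $\KP$ across the skeleton to $\GKP$), so what remains is a routine completeness-based comparison of frame classes. The only point demanding a little care is the reduction $\sk\G \vDash \kur \iff \sk\G \vDash \Kur$, which relies on $\sk\G$ being a bona fide descriptive $\mipc$-frame (so that the base logic $\mipc$ holds automatically). Alternatively, one could argue purely syntactically via the general principle that $\tau(\mipc + \Gamma) = \ms + \{\gamma^t : \gamma \in \Gamma\}$ for any set $\Gamma$ of axioms, applied with $\Gamma = \{\kur\}$; that route trades the semantic bookkeeping for a verification that the G\"odel translation commutes suitably with modus ponens, substitution, and the necessitation rules, with the substitution clause being the delicate step.
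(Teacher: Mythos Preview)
Your semantic argument is correct: both logics have exactly the descriptive $\ms$-frames satisfying $\GKP$ as their frame class, and completeness for extensions of $\ms$ then forces equality. All the ingredients you invoke (\cref{thm:skeleton descriptive}, \cref{cor:validity logics skeleton}, \cref{thm:Kur EQ[max]=max}, \cref{thm:kurbox frame conditions}) occur before \cref{prop:GKur ms+kurt} in the paper, so there is no circularity; in effect you are proving \cref{thm:GKur and global Kuroda principle} twice, once for each side, and reading off the identity.

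The paper, however, takes the purely syntactic shortcut you mention at the end: it simply invokes the general lemma \cite[Lem.~6.1(1)]{BC24a}, which states that $\tau(\mipc+\Gamma)=\ms+\{\gamma^t:\gamma\in\Gamma\}$, and specializes to $\Gamma=\{\kur\}$. So your primary route is genuinely different. Your approach has the advantage of being self-contained within the present paper and of making the frame-theoretic content explicit (indeed it subsumes the later \cref{thm:GKur and global Kuroda principle}); the paper's approach is a one-line application of a reusable syntactic fact and avoids the detour through completeness. The ``delicate step'' you flag for the syntactic route---that the translations of substitution instances of $\kur$ are already derivable in $\ms+\kur^t$---is exactly what the cited lemma handles.
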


The following theorem shows that $\GKur$ is semantically characterized by $\GKP$.

\begin{theorem}\label{thm:GKur and global Kuroda principle}
A descriptive $\ms$-frame validates $\GKur$ iff it satisfies $\GKP$.
\end{theorem}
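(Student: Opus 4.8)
The plan is to derive the statement as a short chain of equivalences, assembling the pieces that have been set up in this section. By definition $\GKur = \tau(\Kur)$, so validating $\GKur$ means exactly $\G \vDash \tau(\Kur)$. The key translation between frame validity at the level of $\G$ and at the level of its skeleton is provided by \cref{cor:validity logics skeleton}: instantiated at $\L = \Kur$, it yields
\[
\G \vDash \GKur \iff \G \vDash \tau(\Kur) \iff \sk\G \vDash \Kur.
\]
This reduces the problem from the modal $\ms$-frame $\G$ to its intuitionistic skeleton $\sk\G$.

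Next I would invoke the characterization of $\Kur$ on the intuitionistic side. By \cref{thm:skeleton descriptive:item1}, $\sk\G$ is a genuine descriptive $\mipc$-frame, so \cref{thm:Kur EQ[max]=max} applies and gives
\[
\sk\G \vDash \Kur \iff \sk\G \text{ satisfies } \KP.
\]
Finally, the bridge between $\KP$ on the skeleton and $\GKP$ on $\G$ has already been established in \cref{thm:kurbox frame conditions}, which states that $\G$ satisfies $\GKP$ iff $\sk\G$ satisfies $\KP$. Concatenating these three biconditionals produces the desired equivalence
\[
\G \vDash \GKur \iff \sk\G \vDash \Kur \iff \sk\G \text{ satisfies } \KP \iff \G \text{ satisfies } \GKP.
\]

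There is essentially no remaining obstacle: the genuinely substantive work (the semantic analysis of the Kuroda formula in \cref{thm:Kur EQ[max]=max}, the skeleton transfer in \cref{thm:skeleton descriptive:item2} and its \cref{cor:validity logics skeleton}, and the $\GKP$-versus-$\KP$ comparison in \cref{thm:kurbox frame conditions}) has been front-loaded into the preceding results, and this theorem is the point at which they combine. The only subtlety to keep an eye on is that each intermediate object is of the correct type for the cited result to apply—in particular that $\sk\G$ qualifies as a \emph{descriptive} $\mipc$-frame so that \cref{thm:Kur EQ[max]=max} is legitimately invoked—which is exactly what \cref{thm:skeleton descriptive:item1} guarantees.
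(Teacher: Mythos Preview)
Your proof is correct and follows essentially the same chain of equivalences as the paper: $\G \vDash \GKur \iff \sk\G \vDash \Kur$ via \cref{cor:validity logics skeleton}, then $\sk\G \vDash \Kur \iff \sk\G$ satisfies $\KP$ via \cref{thm:Kur EQ[max]=max}, then $\KP$ on $\sk\G$ $\iff$ $\GKP$ on $\G$ via \cref{thm:kurbox frame conditions}. Your explicit mention of \cref{thm:skeleton descriptive:item1} to justify applying \cref{thm:Kur EQ[max]=max} to $\sk\G$ is a welcome bit of care that the paper leaves implicit.
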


\begin{proof}
Let $\G$ be a descriptive $\ms$-frame. Since $\GKur = \tau (\Kur)$, \cref{cor:validity logics skeleton} 
yields that 
\[
\sk\G \vDash \Kur \iff \G \vDash \GKur.
\]
\cref{thm:Kur EQ[max]=max} implies that $\sk\G \vDash \Kur$ iff $\sk\G$ satisfies $\KP$. 
By \cref{thm:kurbox frame conditions},  $\sk\G$ satisfies $\KP$ iff $\G$ satisfies $\GKP$. Thus, $\G \vDash\GKur$ iff it satisfies $\GKP$. 
\end{proof}

\begin{remark}\label{rem:GKur canonical}
Since $\GKP$ is a purely order-theoretic condition that does not involve any topology, the class of monadic $\sfour$-algebras validating $\GKur$ is closed under taking canonical extensions. It follows that $\GKur$ is canonical, and hence Kripke complete (see, e.g., \cite[p.~135]{CZ97}). 
It is also natural to investigate whether $\GKur$ has the fmp
and hence is decidable.
Since this is less important for our current purposes, 
we leave it to future work.
\end{remark}

As an immediate consequence of \cref{rem:GKur canonical,prop:Kripke complete admit modal comp}, we obtain:

\begin{theorem}\label{thm:GKur not modal comp Kur}
$\GKur$ is a modal companion of $\Kur$.
\end{theorem}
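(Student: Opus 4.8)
The plan is to read the statement off \cref{prop:Kripke complete admit modal comp} applied to the intuitionistic logic $\L=\Kur$. By definition $\GKur=\tau(\Kur)$, so that proposition gives at once that $\GKur$ is a modal companion of $\Kur$ — that is, $\Kur\vdash\varphi\iff\GKur\vdash\varphi^t$ for every $\Lae$-formula $\varphi$ — as soon as $\Kur$ is known to be Kripke complete. The forward implication $\Kur\vdash\varphi\Rightarrow\GKur\vdash\varphi^t$ is in any event immediate, since $\GKur=\ms+\{\psi^t:\Kur\vdash\psi\}$. Hence the entire argument reduces to verifying the hypothesis of the proposition, namely that $\Kur$ is Kripke complete.

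To establish this I would rerun, on the intuitionistic side, the canonicity argument already used for $\GKur$ in \cref{rem:GKur canonical}. By \cref{thm:Kur EQ[max]=max} a descriptive $\mipc$-frame validates $\Kur$ exactly when it satisfies $\KP$, and $\KP$, being the condition $E_Q[\max X]=\max X$ of \cref{def:KP}, refers only to the orders $R$ and $Q$ and mentions no topology. Consequently the class of monadic Heyting algebras validating $\Kur$ is closed under canonical extensions, so $\Kur$ is canonical and therefore Kripke complete. Substituting this into \cref{prop:Kripke complete admit modal comp} with $\L=\Kur$ then yields both directions of the modal companion equivalence, completing the proof.

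The step I expect to demand the most care is precisely the Kripke completeness of $\Kur$: one must confirm that passing from a monadic Heyting algebra to its canonical extension, equivalently from a descriptive $\mipc$-frame to its underlying Kripke frame, preserves $\KP$. This is exactly the order-theoretic, topology-free nature of $\KP$ that was exploited for $\GKP$ in \cref{rem:GKur canonical}; once that observation is transported to $\Kur$, the result follows immediately, in keeping with its being obtained as a direct consequence of \cref{rem:GKur canonical,prop:Kripke complete admit modal comp}.
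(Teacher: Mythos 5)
Your proof is correct and is essentially the paper's own: the paper obtains this theorem as an immediate consequence of \cref{rem:GKur canonical,prop:Kripke complete admit modal comp}, which is exactly your combination of the order-theoretic canonicity argument with \cref{prop:Kripke complete admit modal comp} applied to $\L=\Kur$. Your explicit verification that $\Kur$ is canonical and hence Kripke complete (via \cref{thm:Kur EQ[max]=max} and the topology-free nature of $\KP$) merely spells out the step the paper leaves implicit in citing \cref{rem:GKur canonical}.
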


We end the section by providing a simple axiomatization of $\GKur$.
We abbreviate $\neg \bbox \neg$ by $\bdia$ and point out that $\ms \vdash \bdia p \leftrightarrow \Diamond \exists p$.

\begin{proposition}\label{prop:axiom GKur}
$\GKur = \ms + \bbox \Diamond \Box p \to \Diamond \bbox p = \ms + \Box \bdia p \to \bdia \Box \Diamond p$.
\end{proposition}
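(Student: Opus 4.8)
The plan is to argue entirely syntactically, starting from the identity $\GKur = \ms + \kur^t$ of \cref{prop:GKur ms+kurt} and computing the G\"odel translation of $\kur = \forall\neg\neg p \to \neg\neg\forall p$. First I would record the auxiliary equivalence $\ms \vdash \bbox\Box p \leftrightarrow \bbox p$: the implication $\bbox p \to \bbox\Box p$ follows from $\bbox p \to \Box p$ (\cref{rem:master modality}) together with the $4$-axiom $\bbox p \to \bbox\bbox p$ and monotonicity of $\bbox$, while $\bbox\Box p \to \bbox p$ follows by applying $\forall$- and $\Box$-monotonicity to the reflexivity axiom $\Box p \to p$. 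Unwinding the translation (recalling $(\neg\varphi)^t = \Box\neg\varphi^t$, $(\forall\varphi)^t = \bbox\varphi^t$, and $p^t = \Box p$) gives $(\forall\neg\neg p)^t = \bbox\Box\Diamond\Box p$ and $(\neg\neg\forall p)^t = \Box\Diamond\bbox\Box p$, so that, after collapsing $\bbox\Box$ to $\bbox$,
\[
\GKur = \ms + \kur^t = \ms + \Box\,(\bbox\Diamond\Box p \to \Box\Diamond\bbox p).
\]

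Next I would establish the first claimed equality by two inclusions. For $\ms + \kur^t \vdash \bbox\Diamond\Box p \to \Diamond\bbox p$, apply the reflexivity axiom for $\Box$ twice to $\Box(\bbox\Diamond\Box p \to \Box\Diamond\bbox p)$: once to strip the outer $\Box$ and once to replace $\Box\Diamond\bbox p$ by $\Diamond\bbox p$ in the consequent. For the reverse inclusion $\ms + (\bbox\Diamond\Box p \to \Diamond\bbox p) \vdash \kur^t$, the point is to upgrade $\Diamond\bbox p$ back to $\Box\Diamond\bbox p$. Starting from $\bbox\Diamond\Box p \to \Diamond\bbox p$, apply $\bbox$-necessitation (a derived rule, since $\bbox\varphi = \Box\forall\varphi$) and normality of $\bbox$ to obtain $\bbox\bbox\Diamond\Box p \to \bbox\Diamond\bbox p$; precomposing with the $4$-axiom $\bbox\Diamond\Box p \to \bbox\bbox\Diamond\Box p$ yields $\bbox\Diamond\Box p \to \bbox\Diamond\bbox p$, and postcomposing with the master-modality inequality $\bbox\Diamond\bbox p \to \Box\Diamond\bbox p$ gives $\bbox\Diamond\Box p \to \Box\Diamond\bbox p$; a final $\Box$-necessitation then recovers $\kur^t$.

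The second equality is purely formal. Substituting $\neg p$ for $p$ in $\bbox\Diamond\Box p \to \Diamond\bbox p$ and using the classical dualities $\Diamond\Box\neg p \leftrightarrow \neg\Box\Diamond p$, $\bbox\neg q \leftrightarrow \neg\bdia q$, and $\neg\bbox\neg q \leftrightarrow \bdia q$ rewrites it as $\neg\bdia\Box\Diamond p \to \neg\Box\bdia p$, whose contrapositive is $\Box\bdia p \to \bdia\Box\Diamond p$; applying the same transformation to the latter returns the former (modulo $\neg\neg p \leftrightarrow p$), so, since extensions of $\ms$ are closed under substitution, the two axioms are interderivable over $\ms$. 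I expect the only genuine obstacle to be the reverse inclusion in the first equality, whose crux is the passage from $\Diamond\bbox p$ to $\Box\Diamond\bbox p$: this is exactly where the fact that $\bbox$ is a master modality (so $\bbox\varphi \to \Box\varphi$) is essential, and it is what lets the compact "local" shape $\bbox\Diamond\Box p \to \Diamond\bbox p$ regenerate the extra boxes present in the translated formula $\kur^t$.
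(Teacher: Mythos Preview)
Your proposal is correct and follows essentially the same route as the paper: compute $\kur^t$, collapse $\bbox\Box$ to $\bbox$ using the master-modality equivalence, and then show the resulting formula is interderivable over $\ms$ with $\bbox\Diamond\Box p \to \Diamond\bbox p$; the second equality is handled by the same contraposition/duality argument. The only cosmetic difference is that the paper compresses your necessitation-and-monotonicity steps into a direct appeal to the equivalences $\ms \vdash \bbox\Box p \leftrightarrow \bbox p$ and $\ms \vdash \Box\bbox p \leftrightarrow \bbox p$ together with equivalent replacement, whereas you spell out the passage from $\Diamond\bbox p$ to $\Box\Diamond\bbox p$ explicitly via $\bbox$-necessitation and the inequality $\bbox q \to \Box q$.
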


\begin{proof}
It is straightforward to check that 
\[
\ms \vdash \bbox \Diamond \Box p \to \Diamond \bbox p \iff \ms \vdash \Box \bdia p \to \bdia \Box \Diamond p.
\]
Therefore, we only show that $\GKur = \ms + \bbox \Diamond \Box p \to \Diamond \bbox p$.
Since
$\GKur = \ms + \kur^t$ (see \cref{prop:GKur ms+kurt}),
it is sufficient to show that $\ms + \kur^t = \ms + \bbox \Diamond \Box p \to \Diamond \Box \forall p$. We have
\[
\kur^t=\Box (\bbox\Box \neg \Box \neg \Box p \to \Box\neg \Box\neg \bbox \Box p)=\Box (\bbox \Box \Diamond \Box p \to \Box\Diamond \bbox \Box p).
\]
By necessitation,
\[
\ms \vdash \Box (\bbox \Box \Diamond \Box p \to \Box\Diamond \bbox \Box p) \iff \ms \vdash \bbox \Box \Diamond \Box p \to \Box\Diamond \bbox \Box p.
\]
Since $\bbox$ is a master modality for $\ms$ (see \cref{rem:master modality}), we have $\ms \vdash \bbox \Box p \leftrightarrow \bbox p$ and $\ms \vdash \Box \bbox p \leftrightarrow \bbox p$.
Thus, using equivalent replacement (see \cite[Thm. 3.65]{CZ97}),
\begin{align*}
\ms \vdash \bbox \Box \Diamond \Box p \to \Box\Diamond \bbox \Box p & \iff \ms \vdash \bbox \Diamond \Box p \to \Box\Diamond \bbox p\\ 
& \iff \ms \vdash \bbox \Diamond \Box p \to \Diamond \bbox p.
\end{align*}
Consequently, 
$\ms + \kur^t = \ms + \bbox \Diamond \Box p \to \Diamond \bbox p$.
\end{proof}

\section{Local Kuroda logic}

In this section we introduce the local
Kuroda principle and the corresponding logic, which will play a fundamental role in the proof of the failure of Esakia's theorem in the monadic setting.

\begin{definition}
Let $\G=(Y,R,E)$ be a descriptive $\ms$-frame.
We say that $\G$ satisfies the \emph{local Kuroda principle} ($\LKP$) if 
\[
\forall x \in Y\,(x\in\qmax Y \Longrightarrow \exists y \in E_R[x] : E[y] \subseteq \qmax Y).
\]
\end{definition}

\begin{remark}
While the global Kuroda principle requires that the $E$-equivalence class of each quasi-maximal point is inside $\qmax Y$, the local Kuroda principle asks that this only holds \emph{locally}, meaning 
that each $E_R$-equivalence class of a quasi-maximal point should contain a point whose $E$-equivalence class is inside
$\qmax Y$. In the next proposition we show that the global Kuroda principle is stronger than its local version.
\end{remark}

\begin{proposition}\plabel{prop:global implies local}
$\GKP$ is strictly stronger than $\LKP$.
\end{proposition}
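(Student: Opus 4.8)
The plan is to prove the two halves of ``strictly stronger'' separately: first that every descriptive $\ms$-frame satisfying $\GKP$ also satisfies $\LKP$, and then that the converse fails on a suitable frame.

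The implication $\GKP \Rightarrow \LKP$ is immediate. Suppose $\G=(Y,R,E)$ satisfies $\GKP$ and let $x \in \qmax Y$. Since $R$ is reflexive we have $x \in E_R[x]$, and $\GKP$ gives $E[x] \subseteq \qmax Y$. Thus $y \coloneqq x$ witnesses the existential quantifier in $\LKP$ at $x$, so $\LKP$ holds.

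For strictness I would exhibit a finite $\ms$-frame, which is automatically descriptive under the discrete topology, that satisfies $\LKP$ but not $\GKP$. The observation guiding the construction is that the two principles can only differ in the presence of a nontrivial $R$-cluster among the quasi-maximal points: if $R$ is a partial order then $E_R[x]=\{x\}$ for every $x$, and $\LKP$ collapses to $\GKP$. I would therefore place two distinct quasi-maximal points $a,b$ inside a single $R$-cluster, and arrange $E$ so that $a$ is $E$-related to a strictly lower (hence non-quasi-maximal) point $c$, while the $E$-class of $b$ is just $\{b\}$. Concretely, take $Y=\{a,b,c\}$, let $R$ be the reflexive–transitive closure of $\{(a,b),(b,a),(c,a)\}$ (so $\{a,b\}$ is a top cluster and $c$ sits strictly below it, with transitivity harmlessly forcing $cRb$), and let $E$ have the two classes $\{a,c\}$ and $\{b\}$. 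Then $\qmax Y=\{a,b\}$, and since $c\in E[a]\setminus\qmax Y$ the frame fails $\GKP$. On the other hand $E_R[a]=E_R[b]=\{a,b\}$ and $E[b]=\{b\}\subseteq\qmax Y$, so $b$ witnesses $\LKP$ for both quasi-maximal points, and $\LKP$ holds. This shows $\LKP$ is satisfied by strictly more frames than $\GKP$, as required.

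The only step requiring genuine care — and the main place an error could hide — is verifying that $(Y,R,E)$ satisfies the confluence condition \myref{def:ms-frame:item4} of an $\ms$-frame. This reduces to the finitely many pairs with $xEy$ and $x\neq y$, namely $(a,c)$ and $(c,a)$: for each such pair and each $z$ with the relevant $R$-successor, one must produce $u$ with $xRu$ and $uEz$, which is a short finite check (for $x=y$ the condition is trivial, taking $u=z$). Once this is confirmed, the frame is a legitimate descriptive $\ms$-frame and, together with the reflexivity argument above, the proposition follows.
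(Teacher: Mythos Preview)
Your proof is correct and follows essentially the same strategy as the paper: the implication $\GKP\Rightarrow\LKP$ is handled by taking the witness $y=x$, and your separating three-point frame is isomorphic to the paper's frame $\mathfrak{H}$ (your $a,b,c$ correspond to the paper's $b,c,a$, respectively). The paper likewise leaves the verification of \cref{def:ms-frame:item4} as a routine check.
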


\begin{proof}
It is straightforward to see that $\GKP$ implies $\LKP$.
We describe a finite $\ms$-frame satisfying $\LKP$ but not $\GKP$.
Let $\mathfrak{H}=(Y,R,E)$ be the three-element frame depicted in \cref{fig:frame H}, where $R[a]=Y$, $R[b]=R[c]=\{b,c\} = \qmax Y$, $E[c]=\{c\}$, and $E[a]=E[b]=\{a,b\}$. 

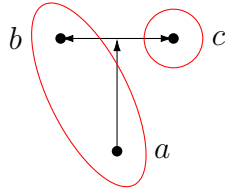
\begin{figure}[h]
\begin{tikzpicture}[-{Latex[width=1mm]}]
\coordinate (B) at (0.75,0);
\coordinate (T) at (0.75,1.5);
\coordinate (TL) at (0,1.5);
\coordinate (TR) at (1.5,1.5);
\fill (B) circle(2pt);
\fill (TL) circle(2pt);
\fill (TR) circle(2pt);
\draw (B) -- (T);
\draw[{Latex[width=1mm]}-{Latex[width=1mm]}] (TL) -- (TR);
\clustertwo{B}{TL}{1.6}{1};
\clusterone{TR}{1.3};
\node at ([shift={(0:0.6)}]B) {$a$};
\node at ([shift={(180:0.6)}]TL) {$b$};
\node at ([shift={(0:0.6)}]TR) {$c$};
\end{tikzpicture}
\caption{The frame $\mathfrak{H}$}\label{fig:frame H}
\end{figure}

It is straightforward to check that $\mathfrak{H}$ 
is an 
$\ms$-frame. Because it is finite, it is also a descriptive $\ms$-frame. Since $a \in E[b]$, $b \in \qmax Y$, and $a \notin \qmax Y$, we see that $\mathfrak{H}$ does not satisfy $\GKP$. 
On the other hand, $c \in E_R[b]$ and $E[c] = \{ c \} \subseteq \qmax Y$. Thus, $\mathfrak{H}$ satisfies $\LKP$.
\end{proof}

\begin{remark}
\hfill\begin{enumerate}
\item
$\KP$ is equivalent to the condition that $E_Q[x]$ is {\em clean} for each  $x \in \max X$, meaning that
$y,z \in E_Q[x]$ and $yRz$ imply $y=z$ (see \cite[Def.~3.6]{BBI23}). 

\item
Similar equivalent conditions exist for both $\GKP$ and $\LKP$. 
Call $E[x]$ in a descriptive $\ms$-frame \emph{quasi-clean} if $y,z \in E[x]$ and $yRz$ imply $zRy$. 
Then $\GKP$
is equivalent to the condition that $E[x]$ is quasi-clean for each $x \in \qmax Y$. On the other hand, $\LKP$
is equivalent to the requirement that for each $x \in \qmax Y$ there is $y \in E_R[x]$ such that $E[y]$ is quasi-clean. 
\end{enumerate}
\end{remark}

We next introduce the logic that is semantically characterized by $\LKP$.

\begin{definition}
Let 
$
\LKur = \ms + \bbox \Diamond \Box p \to \Diamond \forall p.
$
We call $\LKur$ the {\em local Kuroda logic}.
\end{definition}

\begin{remark}
It is straightforward to see that $\LKur$ can be equivalently defined as
\[
\ms + \Box \exists p \to \bdia \Box \Diamond p.
\]
\end{remark}

\begin{lemma}\label{lem: LKur contained in GKur}
$\LKur \subseteq \GKur$.
\end{lemma}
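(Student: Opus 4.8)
I want to show $\LKur \subseteq \GKur$, i.e.\ every theorem of $\LKur$ is a theorem of $\GKur$. Recall that $\LKur = \ms + \bbox \Diamond \Box p \to \Diamond \forall p$ and, by \cref{prop:axiom GKur}, $\GKur = \ms + \bbox \Diamond \Box p \to \Diamond \bbox p$. Since both logics are obtained by adding a single axiom (schema) to $\ms$ and are closed under the same rules of inference, it suffices to derive the $\LKur$-axiom $\bbox \Diamond \Box p \to \Diamond \forall p$ inside $\GKur$. So the entire task reduces to a purely syntactic derivation over $\ms$ from the single hypothesis $\bbox \Diamond \Box p \to \Diamond \bbox p$.

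\medskip

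The key observation is that the two axioms share the same antecedent $\bbox \Diamond \Box p$ and differ only in their consequents: $\GKur$ gives us $\Diamond \bbox p$, whereas $\LKur$ asks for $\Diamond \forall p$. Thus it is enough to prove that $\GKur \vdash \Diamond \bbox p \to \Diamond \forall p$, after which the desired implication follows by composing this with the $\GKur$-axiom (using the transitivity of implication, which is available in $\cpc$ and hence in $\ms$). To obtain $\Diamond \bbox p \to \Diamond \forall p$, I recall from \cref{rem:master modality} that $\bbox = \Box\forall$ is a master modality for $\ms$, so in particular $\ms \vdash \bbox p \to \forall p$. Applying the monotonicity of the diamond $\Diamond$ (which holds for any normal $\sfour$-modality, and $\Box$ is one in $\ms$) to this implication yields $\ms \vdash \Diamond \bbox p \to \Diamond \forall p$ directly. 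Concretely, from $\bbox p \to \forall p$ one gets $\neg\forall p \to \neg\bbox p$, then $\Box\neg\forall p \to \Box\neg\bbox p$ by $\Box$-monotonicity, and contraposing gives $\Diamond \bbox p \to \Diamond \forall p$, where $\Diamond = \neg\Box\neg$.

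\medskip

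Assembling the pieces: inside $\GKur$ we have the axiom $\bbox \Diamond \Box p \to \Diamond \bbox p$, and we have just shown $\ms \vdash \Diamond \bbox p \to \Diamond \forall p$. Chaining these two implications gives $\GKur \vdash \bbox \Diamond \Box p \to \Diamond \forall p$, which is exactly the axiom that, together with $\ms \subseteq \GKur$ and closure under the inference rules, generates $\LKur$. Hence every axiom of $\LKur$ is a theorem of $\GKur$, and since $\GKur$ is closed under modus ponens, substitution, and the necessitation rules, we conclude $\LKur \subseteq \GKur$.

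\medskip

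I do not anticipate a genuine obstacle here: the whole argument is a short modal derivation resting on two facts already in hand, namely the axiomatization of $\GKur$ from \cref{prop:axiom GKur} and the master-modality inequality $\bbox p \to \forall p$ of \cref{rem:master modality}. The only point requiring minor care is that $\Diamond$ is monotone in $\ms$, which follows because $\Box$ is a normal modality; I would either state this as a routine fact or spell out the contrapositive derivation above. Alternatively, one could give a semantic proof via \cref{thm:GKur and global Kuroda principle}, showing that $\GKP$ implies $\LKP$ (already observed in the proof of \cref{prop:global implies local}) and that $\LKur$ is characterized by $\LKP$; but since the latter characterization has not yet been established in the excerpt, the direct syntactic route is cleaner.
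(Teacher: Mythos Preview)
Your proposal is correct and follows essentially the same approach as the paper's proof: both use the axiomatization $\GKur = \ms + \bbox \Diamond \Box p \to \Diamond \bbox p$ from \cref{prop:axiom GKur}, observe that $\ms \vdash \Diamond \bbox p \to \Diamond \forall p$ (via $\bbox p \to \forall p$ and monotonicity of $\Diamond$), and chain the implications to derive the $\LKur$-axiom in $\GKur$. Your write-up simply spells out a few routine steps (e.g., the contrapositive derivation of $\Diamond$-monotonicity) that the paper leaves implicit.
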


\begin{proof}
Since
$\ms \vdash \Diamond \bbox p \to \Diamond \forall p$, we have
\[
\ms + \bbox \Diamond \Box p \to \Diamond \bbox p \vdash \bbox \Diamond \Box p \to \Diamond \forall p.
\]
Thus,
$\LKur \subseteq \GKur$ by \cref{prop:axiom GKur}.
\end{proof}

We next show that $\LKP$ gives a semantic characterization of $\LKur$. For this we recall: 

\begin{definition}
Let $\G=(Y,R,E)$ be an $\ms$-frame and $A \subseteq Y$. We let
\begin{align*}
\Diamond A & \coloneqq R^{-1}[A] & \Box A & \coloneqq  \{ x\in Y : R[x] \subseteq A\}\\
\exists A & \coloneqq E[A] & \forall A & \coloneqq  \{ x\in Y : E[x] \subseteq A\}\\
\bdia A & \coloneqq \Q^{-1}[A] & \bbox A & \coloneqq  \{ x\in Y : \Q[x] \subseteq A\}
\end{align*}
\end{definition}

For a valuation $v$ on $\G$ and a formula $\varphi$ of $\mathcal{L}_{\Box \forall}$, we let $v(\varphi) = \{y \in Y : y \vDash_v \varphi\}$. The following is immediate.

\begin{proposition}
Let $\G$ be an $\ms$-frame, $v$ a valuation on $\G = (Y,R,E)$, and $\varphi$ a formula of $\mathcal{L}_{\Box \forall}$. For every $\bigcirc \in \{\Diamond, \Box, \exists, \forall, \bdia, \bbox\}$ and $x \in Y$,
\[
x \vDash_v \bigcirc \varphi \iff x \in \bigcirc v(\varphi).
\]
\end{proposition}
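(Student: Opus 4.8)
The plan is to establish the stated biconditional separately for each of the six modalities $\bigcirc \in \{\Diamond, \Box, \exists, \forall, \bdia, \bbox\}$, since the claim is a simple unwinding of the satisfaction clauses against the set-theoretic operations just defined. In each case I would start from the definition $x \vDash_v \bigcirc\varphi$ given by the Kripke semantics clauses (for $\Box$ and $\forall$ these appear explicitly after \cref{def:ms-frame}, and the clauses for $\Diamond, \exists, \bdia, \bbox$ follow from their abbreviations $\Diamond = \neg\Box\neg$, $\exists = \neg\forall\neg$, $\bdia = \neg\bbox\neg$) and rewrite the quantifier over successors as a membership statement in the corresponding operation applied to $v(\varphi)$.

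Concretely, for the box-type modalities I would argue as follows. For $\Box$: by definition $x \vDash_v \Box\varphi$ iff every $R$-successor of $x$ satisfies $\varphi$, i.e.\ $R[x] \subseteq v(\varphi)$, which is exactly $x \in \Box v(\varphi)$. The cases of $\forall$ and $\bbox$ are identical with $R$ replaced by $E$ and $Q$ respectively, using that $\bbox$ is interpreted via $Q = E \circ R$ (this is the master modality $\Box\forall$, so $x \vDash_v \bbox\varphi$ iff $Q[x] \subseteq v(\varphi)$). For the diamond-type modalities I would dualize: $x \vDash_v \Diamond\varphi$ iff some $R$-successor of $x$ satisfies $\varphi$, i.e.\ $R[x] \cap v(\varphi) \neq \emptyset$, which says precisely that $x \in R^{-1}[v(\varphi)] = \Diamond v(\varphi)$; and analogously for $\exists$ (via $E$) and $\bdia$ (via $Q$).

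Since each of the six verifications is a one-line translation between a quantifier over successors and the image/preimage notation fixed in the preceding definition, there is no genuine obstacle; the only point requiring a word of care is the treatment of the diamond-type operators, where one should either invoke the abbreviations $\Diamond = \neg\Box\neg$ etc.\ and pass through the already-established box clauses by De Morgan duality, or directly check that $x \in \bigcirc v(\varphi)$ unwinds to an existential over successors. This is exactly why the authors flag the statement as ``immediate,'' and I would present it as such, spelling out at most the $\Box$ and $\Diamond$ cases as representatives and noting that $\forall,\bbox$ mirror $\Box$ while $\exists,\bdia$ mirror $\Diamond$.
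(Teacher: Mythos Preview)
Your proposal is correct and matches the paper's treatment: the paper simply declares the proposition ``immediate'' and gives no proof, and your case-by-case unwinding of the satisfaction clauses against the set-theoretic definitions is exactly the routine verification the authors have in mind. Your handling of $\bbox$ and $\bdia$ via $Q = E \circ R$ is accurate given the paper's composition convention and the identification $\bbox = \Box\forall$.
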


The following lemma is well known for descriptive $\sfour$-frames (see, e.g., \cite[Sec.~3.2]{Esa19}), and hence it also
holds in descriptive $\ms$-frames.

\begin{lemma}\plabel{lem:facts about qmax}
Let $\G=(Y,R,E)$ be a descriptive $\ms$-frame.
\begin{enumerate}
\item\label[lem:facts about qmax]{lem:facts about qmax:item1} $\qmax Y$ is a closed $R$-upset.
\item\label[lem:facts about qmax]{lem:facts about qmax:item2} \textup{(Fine-Esakia)} For every $x \in Y$ there is $y \in \qmax Y$ such that $xRy$.
\end{enumerate}
\end{lemma}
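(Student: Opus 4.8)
The plan is to observe that both claims concern only the relation $R$ together with the topology on $Y$, and make no reference to $E$. Since $R$ is a continuous quasi-order on the Stone space $Y$ by \cref{def:descriptive ms-frame:item2}, the $R$-reduct $(Y,R)$ is a descriptive $\sfour$-frame, and $\qmax Y$, the notion of an $R$-upset, and the accessibility statement in part (2) are all expressible in $(Y,R)$ alone. I would therefore reduce everything to the corresponding well-known facts about descriptive $\sfour$-frames, reproving them as follows.

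For part (1), that $\qmax Y$ is an $R$-upset is purely order-theoretic: if $x \in \qmax Y$ and $xRz$, then for any $w$ with $zRw$ transitivity gives $xRw$, whence $wRx$, and so $wRz$; thus $z \in \qmax Y$. For closedness I would express the complement as a union of clopen sets. Using the tightness of descriptive frames (a point $y$ fails to $R$-see a point $x$ exactly when some clopen $R$-upset contains $y$ but not $x$), one verifies that
\[
Y \setminus \qmax Y = \bigcup \{ R^{-1}[U] \setminus U : U \text{ a clopen } R\text{-upset} \}.
\]
Each $R^{-1}[U]$ is clopen by continuity of $R$, so the right-hand side is open and $\qmax Y$ is closed.

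For part (2), the Fine--Esakia lemma, I would apply Zorn's lemma to $R[x]$ ordered by $R$. This set is a closed $R$-upset, and quasi-maximality inside it coincides with quasi-maximality in $Y$ because any point $R$-seen from $R[x]$ again lies in $R[x]$. Given an $R$-chain $C \subseteq R[x]$, the family $\{R[c] : c \in C\}$ consists of closed sets (continuity of $R$) that are downward directed, hence has the finite intersection property, so by compactness of $Y$ there is $u \in \bigcap_{c \in C} R[c]$; such a $u$ satisfies $cRu$ for all $c \in C$, i.e.\ it is an $R$-upper bound of the chain. Zorn then yields a quasi-maximal $y \in R[x]$, so $xRy$ with $y \in \qmax Y$.

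The order-theoretic bookkeeping is routine; the only real content is topological. The main obstacle is marshalling the descriptive-frame properties correctly: the closedness argument hinges on tightness together with continuity of $R$, while the Fine--Esakia step hinges on the interplay between closedness of the sets $R[c]$ and compactness of $Y$. Since these are exactly the ingredients present in a descriptive $\sfour$-frame, the cleanest route is to invoke the corresponding $\sfour$-results verbatim, as the statement of the lemma already indicates.
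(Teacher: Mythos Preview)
Your proposal is correct and follows exactly the paper's approach: the paper simply observes that the lemma is well known for descriptive $\sfour$-frames (citing \cite[Sec.~3.2]{Esa19}) and therefore holds in descriptive $\ms$-frames, since only the $R$-reduct is involved. You make the same reduction and additionally supply the standard arguments (tightness plus continuity for closedness, compactness plus Zorn for Fine--Esakia) that the paper leaves to the reference.
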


Recalling that $R$-downsets are complements of $R$-upsets, the following is a consequence of 
\cite[Lem.~3.2.20]{Esa19}. 

\begin{proposition}\label{prop:separation closed upset in descr ms4}
Let $\G=(Y,R,E)$ be a descriptive $\ms$-frame and $U$ a closed $R$-upset of $Y$. 
If $x \notin U$, then there is a clopen $R$-downset $D$ of $Y$ 
such that 
$x \in D$ and $D \cap U = \varnothing$.
\end{proposition}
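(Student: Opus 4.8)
The statement to prove is \cref{prop:separation closed upset in descr ms4}: given a descriptive $\ms$-frame $\G=(Y,R,E)$, a closed $R$-upset $U$, and a point $x\notin U$, we can find a clopen $R$-downset $D$ with $x\in D$ and $D\cap U=\varnothing$.

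The plan is to leverage the cited result \cite[Lem.~3.2.20]{Esa19}, which provides the analogous separation property for descriptive $\sfour$-frames. The key observation is that a descriptive $\ms$-frame $\G=(Y,R,E)$ has, as part of its structure, a continuous quasi-order $R$ on a Stone space $Y$; forgetting the equivalence relation $E$, the pair $(Y,R)$ is precisely a descriptive $\sfour$-frame. Since the notions of closed $R$-upset, clopen $R$-downset, and the separation property all refer only to $R$ and the topology, and make no mention of $E$, the separation statement for $(Y,R)$ as a descriptive $\sfour$-frame immediately specializes to the statement we want.

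Concretely, I would argue as follows. First I would note that $(Y,R)$, being the $R$-reduct of a descriptive $\ms$-frame, is a descriptive $\sfour$-frame: $Y$ is a Stone space and $R$ is a continuous quasi-order by \cref{def:descriptive ms-frame:item2}. Next I would recall that the complement $D$ of an $R$-upset is an $R$-downset and vice versa, and that clopen subsets are preserved under complementation, so separating $x$ from a closed upset by a clopen upset (the form in which Esakia's lemma is typically stated) is equivalent, by complementation, to separating $x$ from it by a clopen downset. Then I would invoke \cite[Lem.~3.2.20]{Esa19} directly: since $U$ is a closed $R$-upset and $x\notin U$, there is a clopen $R$-downset $D$ with $x\in D$ and $D\cap U=\varnothing$, exactly as required.

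There is essentially no substantive obstacle here: the result is a direct consequence of the cited separation lemma for descriptive $\sfour$-frames once one recognizes that the $R$-reduct of a descriptive $\ms$-frame is such a frame. The only point requiring a little care is matching the exact formulation of Esakia's lemma (which may separate by upsets rather than downsets, or state the dual form) with the statement at hand, which is handled by the elementary observation that complementation interchanges clopen $R$-upsets and clopen $R$-downsets.
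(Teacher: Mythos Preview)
Your proposal is correct and matches the paper's approach exactly: the paper does not give a separate proof but simply records the statement as a consequence of \cite[Lem.~3.2.20]{Esa19}, and your argument spells out precisely why this is so (the $R$-reduct of a descriptive $\ms$-frame is a descriptive $\sfour$-frame, and complementation interchanges clopen $R$-upsets and clopen $R$-downsets).
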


\begin{theorem}\label{thm:LKur and local Kuroda principle}
A descriptive $\ms$-frame validates $\LKur$ iff it satisfies $\LKP$.
\end{theorem}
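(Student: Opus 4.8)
The plan is to reduce $\G\vDash\LKur$ to the single clopen inclusion $\bbox\Diamond\Box A\subseteq\Diamond\forall A$ for every clopen $A\subseteq Y$ (using the operator description of the connectives and that admissible valuations assign clopen sets), and then prove the two implications separately. Throughout I fix a quasi-maximal point $x$ and exploit that if $x\in\qmax Y$ then $R[x]=E_R[x]=:C$, and that by the formula for $Q=E\circ R$ we have $Q[x]=E[C]$.

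For the direction $\LKP\Rightarrow\G\vDash\LKur$ I would argue semantically and directly (no topology is needed). Suppose $x\in\bbox\Diamond\Box A$, that is $Q[x]\subseteq\Diamond\Box A$. By Fine--Esakia (\cref{lem:facts about qmax:item2}) pick $w\in\qmax Y$ with $xRw$, and apply $\LKP$ at $w$ to obtain $y\in E_R[w]$ with $E[y]\subseteq\qmax Y$; then $xRy$. The key step is to upgrade $E[y]\subseteq\qmax Y$ to $E[y]\subseteq A$: for $u\in E[y]$ we have $xRy$ and $yEu$, so $u\in Q[x]\subseteq\Diamond\Box A$, whence there is $z'$ with $uRz'$ and $R[z']\subseteq A$; since $u$ is quasi-maximal this forces $z'Ru$ and hence $u\in A$. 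Thus $E[y]\subseteq A$, so $y\in\forall A$ and $x\in\Diamond\forall A$, giving the inclusion.

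The converse (validity $\Rightarrow\LKP$) I would prove contrapositively, and this is where the real work lies: turning a purely order-theoretic failure of $\LKP$ into a refuting admissible (clopen) valuation. Assume $\LKP$ fails at some $x\in\qmax Y$, so $E[z]\not\subseteq\qmax Y$ for every $z\in C=E_R[x]$. Taking $A:=\qmax Y$ one checks immediately that $x\in\bbox\Diamond\Box(\qmax Y)$ (every point $R$-reaches a quasi-maximal cluster, which lies in $\qmax Y$, so $\Diamond\Box(\qmax Y)=Y$) while $x\notin\Diamond\forall(\qmax Y)$ (this is exactly the failure of $\LKP$ at $x$); but $\qmax Y$ need not be clopen. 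So I would instead seek a clopen $R$-upset $A$ with the same two features, namely $E[C]\subseteq R^{-1}[A]$ and $E[z]\not\subseteq A$ for all $z\in C$, which (since for a clopen $R$-upset $A$ we have $\Box A=A$ and hence $\bbox\Diamond\Box A=\{w:Q[w]\subseteq R^{-1}[A]\}$) exactly say $x\in\bbox\Diamond\Box A\setminus\Diamond\forall A$. Writing $G:=Y\setminus A$, it suffices to produce a clopen $R$-downset $G$ with $G\cap\qmax Y=\varnothing$ (which gives $E[C]\subseteq R^{-1}[A]$ via Fine--Esakia, since then every quasi-maximal point lies in $A$) and with $E[z]\cap G\neq\varnothing$ for each $z\in C$.

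To build $G$ I would use \cref{prop:separation closed upset in descr ms4} together with compactness, and this is the main obstacle. For each $z\in C$ choose a witness $p_z\in E[z]\setminus\qmax Y$; since $\qmax Y$ is a closed $R$-upset (\cref{lem:facts about qmax:item1}) and $p_z\notin\qmax Y$, separation yields a clopen $R$-downset $D_z$ with $p_z\in D_z$ and $D_z\cap\qmax Y=\varnothing$. As $z\,E\,p_z$, the set $E[D_z]=E^{-1}[D_z]$ (clopen by continuity of $E$) contains $z$, so $\{E[D_z]:z\in C\}$ is a clopen cover of the compact set $C$; extract a finite subcover indexed by $z_1,\dots,z_n$ and set $G:=D_{z_1}\cup\dots\cup D_{z_n}$, a clopen $R$-downset disjoint from $\qmax Y$. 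Each $z\in C$ lies in some $E[D_{z_i}]$, so $E[z]$ meets $D_{z_i}\subseteq G$, giving the second requirement. Then $A=Y\setminus G$ refutes $\bbox\Diamond\Box p\to\Diamond\forall p$ at $x$, so $\G\not\vDash\LKur$. The only delicate points are the topological facts that $C=E_R[x]$ is closed (hence compact) and that images of clopens under $E$ are clopen, both consequences of the continuity of the relations; everything else is the combinatorics already verified on $\qmax Y$.
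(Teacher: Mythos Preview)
Your proof is correct and follows essentially the same route as the paper: both directions use Fine--Esakia to reach a quasi-maximal point, and for the harder contrapositive direction both separate witnesses outside $\qmax Y$ from the closed $R$-upset $\qmax Y$ via \cref{prop:separation closed upset in descr ms4}, then use compactness to extract finitely many clopen $R$-downsets whose union $G$ is the complement of the refuting valuation. The only organizational difference is that the paper packages the construction as an intermediate claim producing a closed $Q$-upset $U=Q[q]$ together with the clopen $R$-upset $V$ (your $A$), applying compactness to $Q[q]$ rather than to $C=E_R[x]=R[x]$; this extra generality is harvested later in the proof of \cref{thm: LKur iff K notin HS}, but for the present theorem your more direct argument is entirely adequate.
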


\begin{proof}
Let $\G$ be a descriptive $\ms$-frame. Suppose that $\G$ satisfies $\LKP$.
We show that $\G \vDash \bbox \Diamond \Box p \to \Diamond \forall p$. Let $V$ be a clopen subset of $\G$. Then $\bbox \Diamond \Box V$ consists of those points $x \in Y$ such that for every $y \in \Q[x]$ there is $z \in R[y]$ with $R[z] \subseteq V$. 
In particular, if $x$ is such a point and
$y \in \Q[x] \cap \qmax Y$, then 
from $y \in \qmax Y$ and $yRz$ it follows that $zRy$, and hence $y \in R[z] \subseteq V$. By \cref{lem:facts about qmax:item2}, there is $q \in R[x] \cap \qmax Y$. Then $xRq$ and $\LKP$
implies that there is $t \in R[q]$ with $E[t] \subseteq \qmax Y$. We show that $E[t] \subseteq V$. Let $s\in E[t]$. Since $xRqRtEs$, we have $s \in \Q[x] \cap \qmax Y$, yielding that $s \in V$.
Therefore, $xRt$ and $E[t] \subseteq V$. Thus, $x \in \Diamond \forall V$, showing that $\bbox \Diamond \Box V \subseteq \Diamond \forall V$ for every clopen subset $V$ of $\G$. Consequently, $\G \vDash \bbox \Diamond \Box p \to \Diamond \forall p$.

In order to prove the converse implication, we establish the following:

\begin{claim}\label{claim:local Kuroda}
Let $\G=(Y,R,E)$ be a descriptive $\ms$-frame. If $\G$ does not satisfy $\LKP$,
then there are a nonempty closed $\Q$-upset $U$ and a clopen $R$-upset $V$ such that $U \cap \qmax Y \subseteq V$ and $U \subseteq E[V] \cap E[Y \setminus V]$.
\end{claim}

\begin{proof}[Proof of the claim.]
Suppose that $\G$ does not satisfy $\LKP$.
Then there is $q \in \qmax Y$ such that for every $t \in R[q]$ we have that $E[t] \nsubseteq \qmax Y$. By \cref{lem:facts about qmax:item1}, $\qmax Y$ is a closed $R$-upset. Since $E[t] \nsubseteq \qmax Y$, there is $s \in Y$ such that $t E s$ and $s \notin \qmax Y$. Therefore, by \cref{prop:separation closed upset in descr ms4},
there is a
clopen $R$-downset $W_t$ such that $s \in W_t$ and $W_t \cap \qmax Y = \varnothing$. Since $t \in E[W_t]$ for every $t \in R[q]$, it follows that $R[q] \subseteq \bigcup\{ E[W_t] : t \in R[q]\}$ and hence $\Q[q]=(E\circ R)[q] \subseteq \bigcup\{ E[W_t] : t \in R[q]\}$. Since $\Q[q]$ is closed, compactness of $Y$ yields $t_1, \dots, t_n \in R[q]$ such that $\Q[q] \subseteq E[W_{t_1}] \cup \dots \cup E[W_{t_n}]$. Therefore,
\[
\Q[q] \subseteq E[W_{t_1} \cup \dots \cup W_{t_n}].
\]
Let $V=Y \setminus (W_{t_1} \cup \dots \cup W_{t_n})$. Then $\Q[q] \subseteq E[Y \setminus V]$. Since the $W_{t_i}$ are clopen $R$-downsets such that $W_{t_i} \cap \qmax Y = \varnothing$, we obtain that $V$ is a clopen $R$-upset containing $\qmax Y$. Thus, $R[q] \subseteq \qmax Y \subseteq V$. Consequently, 
$\Q[q]=(E\circ R)[q] \subseteq E[V]$. Let $U=\Q[q]$. Then $U$ is a nonempty closed $\Q$-upset and $V$ is a clopen $R$-upset such that $U \cap \qmax Y \subseteq V$ and $U \subseteq E[V] \cap E[Y \setminus V]$.
\end{proof}

Suppose now that $\G$ does not satisfy $\LKP$.
Then \cref{claim:local Kuroda} implies that 
there are a nonempty closed $\Q$-upset $U$ and a clopen $R$-upset $V$ such that $U \cap \qmax Y \subseteq V$ and $U \subseteq E[V] \cap E[Y \setminus V]$. Let $q \in U \cap \qmax Y$. Then every $y \in \Q[q]$ is in $U$ because $U$ is a $\Q$-upset. By \cref{lem:facts about qmax:item2}, there is $z \in R[y] \cap U \cap \qmax Y$. Since $U \cap \qmax Y$ is an $R$-upset, we obtain that $R[z] \subseteq U \cap \qmax Y \subseteq V$. Therefore, $q \in \bbox \Diamond \Box V$. However, if $y \in R[q]$, then $y \in U \subseteq E[Y \setminus V]$. Thus, there is $z \in Y \setminus V$ such that $z \in E[y]$. It follows that $q \notin \Diamond \forall V$, showing that $\G \nvDash \bbox \Diamond \Box p \to \Diamond \forall p$.
\end{proof}

\begin{remark}\label{rem:LKur canonical}
$\LKP$ is a purely order-theoretic condition that does not involve topology.
Thus, similar to $\GKur$ (see \cref{rem:GKur canonical}), we have that $\LKur$ is canonical, and hence Kripke complete. We leave the investigation of the fmp and decidability of $\LKur$ to future work.
\end{remark}

\begin{proposition}\plabel{prop:comparison LKur and GKur}
\hfill\begin{enumerate}
\item\label[prop:comparison LKur and GKur]{prop:comparison LKur and GKur:item1} $\LKur$ is strictly contained in $\GKur$.
\item\label[prop:comparison LKur and GKur]{prop:comparison LKur and GKur:item2} $\mgrz \vee \LKur = \mgrz \vee \GKur$.  
\end{enumerate}
\end{proposition}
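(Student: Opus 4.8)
The plan is to prove both parts using the semantic characterizations established above, namely \cref{thm:GKur and global Kuroda principle} and \cref{thm:LKur and local Kuroda principle}, together with the fact that both logics are Kripke complete (see \cref{rem:GKur canonical,rem:LKur canonical}).

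For \eqref{prop:comparison LKur and GKur:item1}, containment $\LKur \subseteq \GKur$ is already \cref{lem: LKur contained in GKur}, so the only work is strictness. Here the frame $\mathfrak{H}$ from the proof of \cref{prop:global implies local} does the job: it satisfies $\LKP$ but not $\GKP$. By \cref{thm:LKur and local Kuroda principle} it validates $\LKur$, while by \cref{thm:GKur and global Kuroda principle} it refutes $\GKur$. Since $\mathfrak{H}$ is finite (hence a descriptive $\ms$-frame), this witnesses $\GKur \nsubseteq \LKur$, giving strict containment. So this part is essentially a one-line appeal to already-constructed material.

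For \eqref{prop:comparison LKur and GKur:item2}, the inclusion $\mgrz \vee \LKur \subseteq \mgrz \vee \GKur$ follows from $\LKur \subseteq \GKur$, so I would concentrate on the reverse inclusion, which amounts to showing $\GKur \subseteq \mgrz \vee \LKur$. Since all logics in sight are Kripke complete, I would argue frame-theoretically: take any descriptive (indeed finite) $\ms$-frame $\G$ validating $\mgrz \vee \LKur$ and show it validates $\GKur$. By \cref{thm:dual characterization mgrz}, validating $\mgrz$ forces $\qmax Y = \max Y$ on clopens, and on a finite frame this means $R$ is a partial order. The key observation is that on a partially ordered frame $\LKP$ and $\GKP$ collapse: if $x \in \qmax Y = \max Y$, then $E_R[x] = \{x\}$ (an $R$-antisymmetric frame has singleton $E_R$-classes), so the only witness $y \in E_R[x]$ available in the $\LKP$ clause is $x$ itself, whence $\LKP$ reads $E[x] \subseteq \qmax Y$, which is exactly $\GKP$. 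Thus any partially ordered frame validating $\LKur$ already validates $\GKur$, and since frames validating $\mgrz$ are (on the finite level) exactly the partially ordered ones, $\mgrz \vee \LKur$ and $\mgrz \vee \GKur$ are validated by the same finite frames.

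The main obstacle is making the frame-theoretic argument rigorous at the level of joins of logics, since the join $\mgrz \vee \LKur$ is the smallest extension of $\ms$ containing both, and I must ensure that a formula separating the two joins would be refuted on some common frame. The cleanest route is to use completeness: both $\mgrz$ and $\LKur$ (and $\GKur$) are Kripke complete, and $\mgrz$ has the finite model property (see \cref{thm:dual characterization mgrz} and the cited fmp results), so it suffices to check validity on finite partially ordered $\ms$-frames, where the collapse of $\LKP$ to $\GKP$ described above applies directly. One subtlety to verify carefully is that the join of two Kripke complete logics need not be Kripke complete in general, so rather than reasoning about arbitrary frames of the join I would show the two joins prove the same theorems by establishing that for finite partially ordered frames $\G \vDash \LKur \iff \G \vDash \GKur$, and then invoke the fmp of $\mgrz$ to reduce the comparison of $\mgrz\vee\LKur$ and $\mgrz\vee\GKur$ to exactly this class of frames.
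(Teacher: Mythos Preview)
Your argument for part \eqref{prop:comparison LKur and GKur:item1} is correct and matches the paper's proof.

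For part \eqref{prop:comparison LKur and GKur:item2}, your key observation---that for $y \in \max Y$ one has $E_R[y]=\{y\}$, so $\LKP$ collapses to $\GKP$---is exactly the heart of the paper's argument. However, your detour through finite frames and the fmp of $\mgrz$ does not close the gap you yourself flag: knowing the fmp for $\mgrz$ does not give the fmp (or even Kripke completeness) for the join $\mgrz \vee \LKur$, so there is no justification for reducing the comparison of $\mgrz\vee\LKur$ and $\mgrz\vee\GKur$ to the class of finite partially ordered $\ms$-frames. As stated, the last step of your plan is a non sequitur.

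The paper sidesteps this issue entirely by working with \emph{descriptive} frames, for which completeness is automatic for every extension of $\ms$. One takes an arbitrary descriptive $\ms$-frame $\G=(Y,R,E)$ validating $\mgrz \vee \LKur$; from $\G \vDash \mgrz$ one obtains $\qmax Y = \max Y$ (the case $U=Y$ of \cref{thm:dual characterization mgrz}), and then for each $y \in \qmax Y = \max Y$ maximality alone yields $E_R[y]=\{y\}$, so the $\LKP$ witness must be $y$ itself and $\GKP$ follows. No global partial-order assumption on $R$, no finiteness, and no fmp are needed; descriptive-frame completeness of $\mgrz \vee \LKur$ then gives $\GKur \subseteq \mgrz \vee \LKur$ directly. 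Your collapse argument is the right one---just run it on descriptive frames rather than trying to reduce to finite ones.
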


\begin{proof}
\eqref{prop:comparison LKur and GKur:item1}.
By \cref{lem: LKur contained in GKur}, 
$\LKur \subseteq \GKur$.
As observed in \cref{prop:global implies local}, there is a descriptive $\ms$-frame $\mathfrak{H}$ that satisfies $\LKP$
but not $\GKP$.
Thus, $\mathfrak{H} \vDash \LKur$ by \cref{thm:LKur and local Kuroda principle},
but $\mathfrak{H} \nvDash \GKur$ by \cref{thm:GKur and global Kuroda principle}. We conclude that $\GKur \nsubseteq \LKur$.

\eqref{prop:comparison LKur and GKur:item2}. Let $\G=(Y,R,E)$ be a descriptive $\ms$-frame that validates $\mgrz \vee \LKur$. 
By \cref{thm:dual characterization mgrz}, $\qmax Y = \max Y$.
By \cref{thm:LKur and local Kuroda principle}, $\G$ satisfies $\LKP$,
and so for every $y \in \max Y$ there is $z \in E_R[y]$ with $E[z] \subseteq \max Y$. 
Since $E_R[y] = \{y\}$,
we obtain that $E[y] \subseteq \max Y$ for every $y \in \max Y$, and hence $\G$ satisfies $\GKP$.
It then follows from \cref{thm:GKur and global Kuroda principle} that $\G\vDash\GKur$. This shows that $\GKur \subseteq \mgrz \vee \LKur$. Since $\LKur \subseteq \GKur$ by~\eqref{prop:comparison LKur and GKur:item1}, we conclude that $\mgrz \vee \LKur = \mgrz \vee \GKur$.
\end{proof}

An alternative characterization of $\Kur$ is given in \cite{Bez00} utilizing morphisms between descriptive $\mipc$-frames. 
Let $\mathfrak{K} = (Y,R,E)$ be the two-element 
$\ms$-frame depicted in \cref{fig:K}, where 
$Y=\{ a, b \}$, $R[a]=Y$, $R[b]=\{b\}$, and $E[a]=E[b]=Y$.
By \cite[Thm.~43(a)]{Bez00}, 
a descriptive $\mipc$-frame $\F$ validates $\Kur$ 
iff there is no $\mipcfrm$-morphism from a closed $Q$-upset of $\F$ onto the skeleton $\sk\mathfrak{K}$. 
We show that a similar characterization holds for $\LKur$. For this we recall morphisms between descriptive $\ms$-frames.
\begin{figure}[h!]
\begin{tikzpicture}[-{Latex[width=1mm]}]
\coordinate (B) at (0,0);
\coordinate (T) at (0,1.5);
\fill (B) circle(2pt);
\fill (T) circle(2pt);
\draw (B) -- (T);
\clustertwo{B}{T}{1.6}{1};
\node at ([shift={(0:0.6)}]B) {$a$};
\node at ([shift={(0:0.6)}]T) {$b$};
\end{tikzpicture}
\caption{The frame $\mathfrak{K}$}\label{fig:K}
\end{figure}

Recalling \cref{conv:EcircR}, we have:

\begin{theorem}\label{thm: LKur iff K notin HS}
Let $\G = (Y,R,E)$ be a descriptive $\ms$-frame. Then $\G \vDash \LKur$ iff there is no $\msfrm$-morphism from a closed $\Q$-upset of $\G$ onto $\mathfrak{K}$. 
\end{theorem}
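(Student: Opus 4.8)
The plan is to combine the semantic characterization of \cref{thm:LKur and local Kuroda principle} with a translation of the p-morphism conditions into combinatorial data. By \cref{thm:LKur and local Kuroda principle}, $\G \vDash \LKur$ iff $\G=(Y,R,E)$ satisfies $\LKP$, so it suffices to show that $\G$ satisfies $\LKP$ iff there is no $\msfrm$-morphism from a closed $Q$-upset of $\G$ onto $\mathfrak{K}$. Throughout I regard a closed $Q$-upset $C$ of $\G$ as a descriptive $\ms$-frame in its own right: since $R\subseteq Q$ and $E\subseteq Q$, such a $C$ is simultaneously a closed $R$-upset and a union of $E$-classes, and with the subspace topology and the restrictions of $R$ and $E$ it is again a descriptive $\ms$-frame. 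The first step is to unwind \cref{def:msfrm-morphisms} for a map $f\colon C\to\mathfrak{K}$ in terms of the clopen partition $A=f^{-1}(a)$, $B=f^{-1}(b)$. Using that in $\mathfrak{K}$ we have $R[b]=\{b\}$, $R[a]=\{a,b\}$ and $E[a]=E[b]=\{a,b\}$, I expect to obtain that $f$ is an $\msfrm$-morphism iff $A,B$ are clopen and: (i) $B$ is an $R$-upset of $C$; (ii) every $x\in A$ satisfies $R[x]\cap B\neq\varnothing$; and (iii) every $E$-class of $C$ meets both $A$ and $B$. (Condition (iii) forces any such morphism to be onto.)

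For the direction $\neg\LKP\Rightarrow$ morphism, I would invoke the claim established in the proof of \cref{thm:LKur and local Kuroda principle} (\cref{claim:local Kuroda}): if $\G$ fails $\LKP$ there are a nonempty closed $Q$-upset $U$ and a clopen $R$-upset $V$ with $U\cap\qmax Y\subseteq V$ and $U\subseteq E[V]\cap E[Y\setminus V]$. I then set $C=U$, $B=U\cap V$, $A=U\setminus V$, and let $f$ send $B$ to $b$ and $A$ to $a$. Condition (i) is immediate since $U$ and $V$ are $R$-upsets. Condition (ii) follows from \cref{lem:facts about qmax:item2}: any $x\in A\subseteq U$ has an $R$-successor in $\qmax Y$, which lies in $U\cap\qmax Y\subseteq V$, hence in $B$. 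Condition (iii) follows from $U\subseteq E[V]\cap E[Y\setminus V]$ together with $U$ being $E$-closed, so every $E$-class of $U$ meets both $U\cap V=B$ and $U\setminus V=A$. Finally $A,B$ are clopen (so $f$ is continuous) and nonempty (nonemptiness of $B$ uses \cref{lem:facts about qmax:item2} to see $U\cap\qmax Y\neq\varnothing$), giving the desired onto $\msfrm$-morphism.

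For the converse, morphism $\Rightarrow\neg\LKP$, I argue by contradiction: assume $\G$ satisfies $\LKP$ and that there is an $\msfrm$-morphism $f\colon C\to\mathfrak{K}$ with $C$ a nonempty closed $Q$-upset, and derive a contradiction from (i)--(iii). First note that, since $C$ is an $R$-upset, $\qmax C=\qmax Y\cap C$, and that for a quasi-maximal point $x$ we have $E_R[x]=R[x]\subseteq\qmax Y$ (the inclusion using that $\qmax Y$ is an $R$-upset, \cref{lem:facts about qmax:item1}). Pick $q\in\qmax C$, nonempty by \cref{lem:facts about qmax:item2}. Applying $\LKP$ to $q\in\qmax Y$ yields $y_0\in E_R[q]\subseteq C$ with $E[y_0]\subseteq\qmax Y$, hence $E[y_0]\subseteq\qmax C$. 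By (iii) there is $w\in E[y_0]\cap A$; then $w\in\qmax Y$, and since $w\in A$, condition (ii) gives $z\in R[w]\cap B$. As $w$ is quasi-maximal and $wRz$, we get $zRw$, whence condition (i) forces $w\in B$, contradicting $w\in A$.

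The main obstacle I expect is this converse direction: the delicate point is extracting a local violation of $\LKP$ from the purely global data of a p-morphism, and the crux is the observation that a quasi-maximal point mapping to $a$ becomes impossible once a cluster-mate maps to $b$, precisely because $f^{-1}(b)$ is an $R$-upset. The preliminary, largely routine, hurdle is to verify carefully that closed $Q$-upsets are descriptive $\ms$-subframes and that the two p-morphism clauses of \cref{def:msfrm-morphisms} are genuinely equivalent to conditions (i)--(iii); once this dictionary is set up, both implications are short.
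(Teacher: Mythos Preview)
Your proposal is correct. The direction $\neg\LKP\Rightarrow$ \emph{morphism exists} is essentially identical to the paper's: both invoke \cref{claim:local Kuroda} and define $f$ via the clopen partition $U\cap V$, $U\setminus V$; your dictionary (i)--(iii) just repackages the p-morphism verification the paper carries out directly. (One tiny omission: you only justify $B\neq\varnothing$; nonemptiness of $A$ follows from $U\subseteq E[Y\setminus V]$ and $E$-saturation of $U$, or equivalently from your condition (iii).)

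The genuine difference is in the other direction. The paper argues that closed $Q$-upsets and onto $\msfrm$-morphisms preserve validity, so it suffices to refute $\bbox\Diamond\Box p\to\Diamond\forall p$ on $\mathfrak{K}$ itself, which is a one-line computation with $U=\{b\}$. You instead give a direct combinatorial contradiction from $\LKP$: pick $q\in\qmax C$, use $\LKP$ to find $y_0$ with $E[y_0]\subseteq\qmax Y$, then use (iii) to land in $A$, (ii) to reach $B$, and (i) plus quasi-maximality to bounce back into $B$, contradicting the partition. Your route is self-contained and avoids appealing to general preservation theorems; the paper's route is shorter and more conceptual, exploiting the standard fact that validity transfers along p-morphic images and generated subframes. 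Both are perfectly legitimate, and your argument has the minor advantage of making explicit \emph{why} $\mathfrak{K}$ is the obstruction, rather than verifying it by calculation.
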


\begin{proof}
To prove the left-to-right implication, since taking closed $\Q$-upsets and onto $\msfrm$-morphisms preserves validity of formulas,  it is sufficient to show that the frame $\mathfrak{K}$ depicted in \cref{fig:K} does not validate the formula $\bbox \Diamond \Box p \to \Diamond \forall p$ axiomatizing $\LKur$ over $\ms$. Let $U=\{b\}$. We have
\[
\bbox \Diamond \Box U = \bbox \Diamond U = \bbox Y = Y \quad \text{and} \quad \Diamond \forall U = \Diamond \varnothing = \varnothing.
\]
Thus, $\bbox \Diamond \Box p \to \Diamond \forall p$ is not valid on $\mathfrak{K}$.

We prove the contrapositive of the other implication. Suppose that $\G \nvDash \LKur$ and let $\G = (Y,R,E)$. By \cref{thm:LKur and local Kuroda principle}, $\G$ does not satisfy $\LKP$. Therefore, by \cref{claim:local Kuroda}, there are a nonempty closed $\Q$-upset $U$ and a clopen $R$-upset $V$ of $\G$ such that $U \cap \qmax Y \subseteq V$ and $U \subseteq E[V] \cap E[Y \setminus V]$. We define a map $f \colon U \to \mathfrak{K}$ by
\[
f(x)=
\begin{cases}
b & \text{if } x \in U \cap V,\\
a & \text{otherwise.}
\end{cases}
\]
We show that $f$ is a $\msfrm$-morphism. 
Since $U$ is a $\Q$-upset, $U$ is $E$-saturated (that is, $E[U]=U$). Therefore,
from $U \ne \varnothing$ 
and $U \subseteq E[V] \cap E[Y \setminus V]$ it follows that $U \cap V$ and $U \setminus V$ are not empty.
Thus, since $V$ is clopen in $Y$, we obtain that $\{U \cap V, U \setminus V\}$ is a clopen partition of $U$. Hence, $f$ is a continuous onto map. We next show that $f$ is a p-morphism with respect to $R$. That $V$ is an $R$-upset in $Y$ implies that $U \cap V$ is an $R$-upset in $U$. So, $fR[x] \subseteq R[f(x)]$ for every $x \in U$. Since $U \cap \qmax Y \subseteq V$, if $x \in U$, 
\cref{lem:facts about qmax:item2} implies that there is $y \in U \cap \qmax Y \subseteq V$ with $xRy$. So, if $f(x)=a$, then there is $y \in V$ with $xRy$, and hence $f(y)=b$. Therefore, $R[f(x)] \subseteq fR[x]$  for every $x \in U$. Finally, we show that $f$ is a p-morphism with respect to $E$. Since $a E b$ in $\mathfrak{K}$, it follows that $fE[x] \subseteq E[f(x)]$ for every $x \in U$. Because $U$ is a $\Q$-upset, it follows from $U \subseteq E[V] \cap E[Y \setminus V]$ that for every $x \in U$ there are $y \in U \cap V$ and $z \in U \setminus V$ with $y,z \in E[x]$. Therefore, $E[f(x)] \subseteq fE[x]$ for every $x \in U$. This shows that $f$ is a $\msfrm$-morphism. Therefore, $f$ is a $\msfrm$-morphism from the closed $\Q$-upset $U$ of $\G$ onto $\mathfrak{K}$.
\end{proof}

\begin{remark}
\cref{thm: LKur iff K notin HS} can be phrased using the language of splitting logics (see, e.g., \cite[Sec.~7.7]{Kra99} and \cite[Sec.~2.4]{Wol93}).
Indeed, 
since the algebraic models of $\ms$ form a variety with equationally definable principal congruences (EDPC), it follows from the general considerations of Blok and Pigozzi \cite{BP82} that
splitting logics above $\ms$ are axiomatized by the Jankov-Fine formulas of finite rooted $\ms$-frames, where an $\ms$-frame $(Y,R,E)$ is rooted if there is $y \in Y$ such that $\Q[y]=Y$.  
By \cite[Cor.~9.64]{CZ97}, if $\L$ is a splitting logic above $\ipc$, then 
$\tau\L$ is a splitting logic above $\sfour$. In fact, if $\sf L$ is axiomatized by the Jankov-Fine formula of a finite $\ipc$-frame $\F$, then $\tau\L$ is axiomatized by the Jankov-Fine formula of $\F$ viewed as an $\sfour$-frame. We show that this is no longer true in the monadic setting. 

By \cite[Thm.~43(a)]{Bez00}, $\Kur$ is the splitting logic above $\mipc$ axiomatized by the Jankov-Fine formula of $\sk\mathfrak{K}$.
By \cref{thm: LKur iff K notin HS}, $\LKur$ is the splitting logic above $\ms$ axiomatized by the Jankov-Fine formula of $\mathfrak{K}$.
On the other hand, $\tau(\Kur) = \GKur$, which strictly contains $\LKur$ by \cref{prop:comparison LKur and GKur:item1}. 
Thus, in the lattice of extensions of $\ms$, it is the behavior of $\LKur$, rather than $\GKur$, that is similar to that of $\Kur$ in the lattice of extensions of $\mipc$.
\end{remark}

In the following section we will see that, unlike $\GKur$, the logic $\LKur$ is not a modal companion of $\Kur$. In fact, we will show that $\LKur$ is a modal companion of $\mipc$. This observation is at the heart of the proof of the failure of Esakia's theorem for $\mipc$.

\section{Failure of Esakia's theorem for $\mipc$}\label{sec:failure Esakia}

In this 
section we show that the monadic analogue of Esakia's theorem fails. In fact, we prove a stronger result: not only is $\mgrz$ not the greatest modal companion of $\mipc$, but $\mipc$ has no greatest modal companion at all! 
We do this by establishing that $\LKur$ is a modal companion of $\mipc$. Since $\mgrz\vee\LKur=\mgrz\vee\GKur$ (see \cref{prop:comparison LKur and GKur:item2}) and $\GKur$ is not a modal companion of $\mipc$ (see \cref{thm:GKur not modal comp Kur}), the result follows.

To see that $\LKur$ is a modal companion of $\mipc$, we require the following 
lemma which shows that each finite $\mipc$-frame $\F$ can be realized as the skeleton of a $\LKur$-frame $\G$. Intuitively, $\G$ is constructed by adding to $\F$ a copy of its maximal layer so that each maximal element of $\F$ is $E_R$-related to its copy and the set of these new elements is $E$-saturated in $\G$.

\begin{lemma}\label{thm:skeleton M-frame}
If $\F$ is a finite $\mipc$-frame, then there is a finite $\LKur$-frame $\G$ such that $\F \cong \sk\G$. 
\end{lemma}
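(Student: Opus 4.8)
The plan is to realize $\F=(X,R,Q)$ as the skeleton of a finite $\ms$-frame obtained by doubling the top layer of $\F$. Write $\max X$ for the $R$-maximal points, set $Y=X\sqcup X'$ where $X'=\{m':m\in\max X\}$ is a disjoint copy of $\max X$, and let $\sigma\colon Y\to X$ be the projection fixing $X$ and sending each $m'$ to $m$. I define a quasi-order by $y_1\,\Rnew\,y_2\iff \sigma(y_1)\,R\,\sigma(y_2)$; since $R$ is a partial order, the $E_{\Rnew}$-clusters are exactly the fibers of $\sigma$, namely the singletons $\{x\}$ for non-maximal $x$ and the two-element clusters $\{m,m'\}$. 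I define an equivalence relation $\Enew$ by $x\,\Enew\,y\iff x\,E_Q\,y$ for $x,y\in X$, by $m'\,\Enew\,n'\iff m\,E_Q\,n$ for copies, and with no copy $\Enew$-related to any original point. Thus $X'$ is $\Enew$-saturated, which is exactly the feature that will yield the local Kuroda principle. Let $\G=(Y,\Rnew,\Enew)$.

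I would then verify, in order: (1) $\Enew$ is an equivalence relation and $\G$ satisfies the compatibility axiom \cref{def:ms-frame:item4}; since $\G$ is finite it is automatically descriptive. (2) Because $R$ is a partial order, $\qmax Y=\sigma^{-1}(\max X)=\max X\cup X'$, and for each quasi-maximal point the cluster $E_{\Rnew}[x]$ contains a copy $m'$ with $\Enew[m']\subseteq X'\subseteq\qmax Y$; hence $\G$ satisfies $\LKP$, so $\G\vDash\LKur$ by \cref{thm:LKur and local Kuroda principle}. (3) The projection $\sigma$ descends to a bijection from $\sk\G$ onto $\F$; it transports the skeleton order to $R$ immediately from the definition of $\Rnew$, and the real content is that it transports the skeleton relation $\Enew\circ\Rnew$ to $Q$. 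This last point is a four-way case analysis according to whether the two arguments lie in $X$ or in $X'$, where $\Enew$-saturation of $X'$ pins down the intermediate witness and one uses that $Q=E_Q\circ R$ holds in $\F$ (a consequence of \cref{def:mipc-frame:item4} and $R\subseteq Q$).

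I expect the main obstacle to be the backward direction of the identity $\Enew\circ\Rnew=Q$ when a copy is involved, which reduces to the following fact about finite $\mipc$-frames: \emph{if $m\in\max X$ and $z\,E_Q\,m$, then there is a maximal $k$ with $z\,R\,k$ and $k\,E_Q\,m$.} To prove it I would take $k$ to be an $R$-maximal element of the nonempty set $S=\{w: z\,R\,w\text{ and }w\,E_Q\,m\}$ (nonempty by \cref{def:mipc-frame:item4} applied to $z\,Q\,m$) and argue that $k\in\max X$: if $k\,R\,w$ with $k\ne w$, then $m\,Q\,k$ (as $k\,E_Q\,m$) together with $k\,Q\,w$ (as $k\,R\,w$) gives $m\,Q\,w$, so \cref{def:mipc-frame:item4} combined with the maximality of $m$ forces $m\,E_Q\,w$, whence $w\in S$, contradicting the maximality of $k$ in $S$. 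This lemma, together with the simpler observation that $m\,Q\,y\iff m\,E_Q\,y$ for maximal $m$ (again \cref{def:mipc-frame:item4} plus maximality), disposes of the remaining cases in (3) and of the verification of \cref{def:ms-frame:item4} in the presence of the copies; everything else is routine finite bookkeeping.
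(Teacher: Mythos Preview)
Your construction is exactly the paper's: both adjoin a disjoint copy of $\max X$, pull back $R$ along the projection $\sigma$, and make the new layer $\Enew$-saturated, and the verifications of the $\ms$-frame axiom, of $\LKP$, and of $\sk\G\cong\F$ follow the same outline. The only cosmetic difference is that for the backward direction of $\Qnew\leftrightarrow Q$ when the second argument is a copy, the paper sidesteps your auxiliary lemma by routing through $\sigma(z)\in X$ and using $\Rnew\circ\Enew\circ\Rnew\subseteq\Qnew$ (transitivity of $\Qnew$), rather than producing a maximal witness directly.
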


\begin{proof}
Let $\F =(X,R,Q)$ be a finite $\mipc$-frame. Consider a set $M$ disjoint from $X$ that is in bijective correspondence with $\max X$. Let $g \colon M \to \max X$ be the bijection,
$Y = X \cup M$, and define $f \colon Y \to X$ by
\[
f(x) =
\begin{cases}
x & \text{if } x \in X,\\
g(x) & \text{if } x\in M.
\end{cases}
\]
Denote by $\Rnew$ the binary relation on $Y$ defined by
\[
x \Rnew y \iff f(x) R f(y).
\]
It is straightforward to check that $\Rnew$ is a quasi-order on $Y$ and that $f$ is a p-morphism from $(Y, \Rnew)$ to $(X,R)$. 
Define a binary relation $\Enew$ on $Y$ by 
\[
x \Enew y \iff
\begin{cases}
x,y \in X \text{ and } xE_Qy, \text{ or}\\
x, y \in M \text{ with } g(x)E_Qg(y).
\end{cases}
\]

\begin{claim}\label{claim:G ms frame}
$\G=(Y, \Rnew, \Enew)$ is an $\ms$-frame.
\end{claim}

\begin{proof}[Proof of the claim.]
Since $E_Q$ is an equivalence relation on $X$, we obtain that $\Enew$ is a well-defined equivalence relation on $Y$. 
Therefore, it remains to check \cref{def:ms-frame:item4}.
Let $x,y,z \in Y$ with $x \Enew y$ and $y \Rnew z$. We consider cases. First suppose that $x,y,z \in X$. Then $x E_Q y$ and $y R z$. Since, by \cref{rem:MIPC and MS4 frames}, $(X,R,E_Q)$ is an $\ms$-frame, there exists $u \in X$ such that $xRu$ and $uE_Qz$. Thus, $x\Rnew u$ and $u\Enew z$. 

The next case to consider is when $x,y \in X$ and $z \in M$. Since $y \Rnew z$, we have that $y R g(z)$. Since $(X,R,E_Q)$ is an $\ms$-frame (see \cref{rem:MIPC and MS4 frames}), there is $u \in X$ such that $x R u$ and $u E_Q g(z)$.
Let $v \in \max X$ be such that $u R v$. Then $v E_Q g(z)$ because $(X,R,E_Q)$ is an $\ms$-frame and $g(z) \in \max X$. Therefore, $x R v$ and because $g \colon M \to \max X$ is a bijection, there is $m \in M$ ($m=g^{-1}(v)$) such that $x \Rnew m$ and $m \Enew z$ (since $g(m)\mathop{=}v E_Q g(z)$). 

Observe that, since $x\Enew y$, we have $x \in M$ iff $y \in M$. Therefore, the last case to consider is when $x, y \in M$. From $y \Rnew z$ it follows that $f(y) R f(z)$, and hence $f(z)=f(y)$ because $f(y) \in \max X$ (since $y \in M$). Because $f$ is injective on $M$, we obtain that $z=y$. So, $x \Rnew x$ and $x \Enew y\mathop{=}z$. We have thus shown that $\G$ satisfies \cref{def:ms-frame:item4}. Consequently, $\G$ is an $\ms$-frame.
\end{proof}

\begin{claim}\label{claim:G validates LKur}
$\G \vDash \LKur$.
\end{claim}

\begin{proof}[Proof of the claim.]
By \cref{thm:LKur and local Kuroda principle}, it is sufficient to verify that $\G$ satisfies $\LKP$. 
We first prove that $\qmax Y = \max X \cup M$. Since $f^{-1}[\max X]=\max X \cup M$, it is sufficient to show that $y \in \qmax Y$ iff $f(y) \in \max X$. Suppose that $y \in \qmax Y$ and $f(y) R x$. Since $f$ is onto, there is $z \in Y$ with $f(z)=x$. Therefore, $f(y) R f(z)$, and so $y \Rnew z$. Thus, $z \Rnew y$ because $y$ is quasi-maximal. From $x \Rnew z$ and $z \Rnew x$ it follows that $f(z) R f(y)$ and $f(y) R f(z)$. Since $R$ is a partial order, we conclude that $x=f(z)=f(y)$. This shows that $f(y) \in \max X$. Conversely, suppose that $f(y) \in \max X$ and $y \Rnew z$. Then $f(y) R f(z)$, and so $f(y)=f(z)$ because $f(y)$ is maximal. Therefore, $f(z) R f(y)$, and hence $z\Rnew y$. Thus, $y \in \qmax Y$. This shows that $\qmax Y = \max X \cup M$.

We are ready to verify that $\G$ satisfies $\LKP$.
Let $y \in \qmax Y$. If $y \in M$, then $\Enew[y] \subseteq M \subseteq \qmax Y$ by definition of $\Enew$. Otherwise, $y \in \max X$, and so $y \ E_{\Rnew} \ y' \in M$ and $\Enew[y'] \subseteq M \subseteq \qmax Y$. In either case, there is $z \in Y$ such that $y E_{\Rnew} z$ and $\Enew[z] \subseteq \qmax Y$. Therefore, $\G$ satisfies $\LKP$.
\end{proof}

\needspace{5\baselineskip}
\begin{claim}\label{claim:skG iso F}
$\sk\G \cong \F$.
\end{claim}

\begin{proof}[Proof of the claim.]
Since $R$ is a partial order and $y \Rnew z$ iff $f(y) R f(z)$ for every $y,z \in Y$, we have that $y E_{\Rnew} z$ iff $f(y)=f(z)$. Recalling that $\Qnew$ is the composition $\Enew \circ \Rnew$, we show that $y \Qnew z$ iff $f(y) Q f(z)$ for every $y,z \in Y$. It follows from the definition of $\Enew$ that $y \Enew z$ implies $f(y) E_Q f(z)$. Therefore, $y \Qnew z$ implies $f(y) Q_{E_Q} f(z)$, and hence $f(y) Q f(z)$ because $\F$ is an $\mipc$-frame. Thus, $y \Qnew z$ implies $f(y) Q f(z)$. Conversely, suppose that $f(y) Q f(z)$. Then there is $x \in X$ such that $f(y) R x$ and $x E_Q f(z)$.
Therefore, $y \Rnew x$ because $f(y)Rx\mathop{=}f(x)$.
Since $x E_Q f(z)$, we either have that $z \in X$ and $x E_Q z$ or that $z \in M$ with $x E_Q g(z)$.
In the former case, $y \Rnew x$ and $x \Enew z$, so $y \Qnew z$. In the latter case, $y \Rnew x$, $x \Enew g(z)$, and $g(z) \Rnew z$. Thus, $y(\Rnew \circ \Enew \circ \Rnew)z$. We have that $\Rnew, \Enew \subseteq \Qnew$ and $\Qnew$ is transitive because $\Qnew = \Enew \circ \Rnew$ and $\G = (Y, \Rnew, \Enew)$ is an $\ms$-frame. Thus, $y \Qnew z$ in the latter case as well.
This proves that $y \Qnew z$ iff $f(y) Q f(z)$. It is then straightforward to see that $f$ induces a map $f' \colon \sk\G \to \F$ sending the equivalence class $E_{\Rnew}[y]$ to $f(y)$. Since $y \Rnew z$ iff $f(y) R f(z)$ and $y \Qnew z$ iff $f(y) Q f(z)$ for every $y,z \in Y$, it follows that $f'$ is a bijection of $\mipc$-frames that preserves and reflects both relations. Therefore, by \cref{rem:morph dmsfrm:item2}, $f' \colon \sk\G \to \F$ is an isomorphism of $\mipc$-frames.
\end{proof}

The above three claims finish the proof.
\end{proof}

\begin{remark}
    It is open whether \cref{thm:skeleton M-frame} generalizes to arbitrary $\mipc$-frames. In fact, it remains open whether the functor $\rho \colon \msfrm \to \mipcfrm$ is essentially surjective (see \cref{rem: tauL modal comp}).
\end{remark}

\begin{remark}\label{rem:K and H and LKur}
Since the $\ms$-frames $\mathfrak{H}$ and $\mathfrak{K}$ shown in \cref{fig:frame H,fig:K} will play a fundamental role in this section, we point out that the construction employed in the proof of \cref{thm:skeleton M-frame} applied to the $\mipc$-frame $\sk\mathfrak{K}$ yields a frame isomorphic to $\mathfrak{H}$. It then follows from \cref{thm:skeleton M-frame} that $\mathfrak{H}$ is an $\LKur$-frame and that $\sk\mathfrak{K}$ and $\sk\mathfrak{H}$ are isomorphic $\mipc$-frames.
\end{remark}

We are now ready to prove that the local Kuroda logic is a modal companion of $\mipc$.

\begin{theorem}\label{thm:LKur modal companion mipc}
$\LKur$ is a modal companion of $\mipc$.
\end{theorem}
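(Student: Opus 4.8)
The plan is to prove the two directions of the modal companion relation separately, using the Gödel translation characterization. I must show that for every formula $\varphi$ of $\Lae$,
\[
\mipc \vdash \varphi \iff \LKur \vdash \varphi^t.
\]
The left-to-right direction is the easy half. Since $\LKur$ is an extension of $\ms$ and $\ms$ is a modal companion of $\mipc$ by \cref{thm: MS4 modal comp of MIPC}, whenever $\mipc \vdash \varphi$ we have $\ms \vdash \varphi^t$, and therefore $\LKur \vdash \varphi^t$ because $\ms \subseteq \LKur$. So the content is entirely in the converse: I must show that $\LKur \nvdash \varphi^t$ whenever $\mipc \nvdash \varphi$.

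For the converse I would argue by contraposition and exploit the finite model property of $\mipc$. Suppose $\mipc \nvdash \varphi$. By the fmp of $\mipc$ there is a finite $\mipc$-frame $\F$ with $\F \nvDash \varphi$. The key is now \cref{thm:skeleton M-frame}: this finite $\F$ can be realized as the skeleton of a (finite) $\LKur$-frame $\G$, i.e.\ there is a finite $\G$ with $\G \vDash \LKur$ and $\sk\G \cong \F$. Applying \cref{thm:skeleton descriptive:item2} (the fundamental relation $\sk\G \vDash \psi \iff \G \vDash \psi^t$ between a frame and its skeleton under the Gödel translation), from $\sk\G \cong \F \nvDash \varphi$ I obtain $\G \nvDash \varphi^t$. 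Since $\G \vDash \LKur$, this witnesses $\LKur \nvdash \varphi^t$, which is exactly what is needed.

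Putting the two directions together yields $\mipc \vdash \varphi \iff \LKur \vdash \varphi^t$ for all $\varphi$, i.e.\ $\LKur$ is a modal companion of $\mipc$. The main obstacle in this argument is not located in the proof of this theorem itself but has already been discharged in \cref{thm:skeleton M-frame}, whose nontrivial construction (adjoining a copy of the maximal layer to force $\LKP$ while preserving the skeleton) supplies the crucial $\LKur$-frame realizing an arbitrary finite $\mipc$-frame. Given that lemma, together with the soundness of the Gödel translation over $\ms$ and the fmp of $\mipc$, the present proof is a short assembly of these ingredients. It is worth emphasizing the contrast with $\GKur$: the analogous realization fails there, which is precisely why $\GKur$ is a modal companion of $\Kur$ rather than of $\mipc$, and why $\LKur$ is the logic relevant to the failure of Esakia's theorem.
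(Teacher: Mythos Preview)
Your proof is correct and follows essentially the same approach as the paper: both argue the nontrivial direction by contraposition, invoking the fmp of $\mipc$ to obtain a finite refuting $\mipc$-frame $\F$, then applying \cref{thm:skeleton M-frame} to realize $\F$ as $\sk\G$ for an $\LKur$-frame $\G$, and finally using \cref{thm:skeleton descriptive:item2} to conclude $\G \nvDash \varphi^t$. Your additional explicit treatment of the forward direction and the closing commentary are fine elaborations but do not change the argument.
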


\begin{proof}
It is sufficient to show that for every formula $\varphi$ of $\Lae$ we have that
$\mipc \nvdash \varphi$ implies $\LKur \nvdash \varphi^t$. Suppose that $\mipc \nvdash \varphi$. Since $\mipc$ has the fmp, there is a finite $\mipc$-frame $\F$ such that $\F \nvDash \varphi$. By \cref{thm:skeleton M-frame}, there is an $\LKur$-frame $\G$ such that $\F \cong \sk\G$. Therefore, $\sk\G \nvDash \varphi$, and so $\G \nvDash \varphi^t$ by \cref{thm:skeleton descriptive:item2}. Thus, $\LKur \nvdash \varphi^t$.
\end{proof}

\begin{theorem}\plabel{lem:G2 and G3}
\hfill\begin{enumerate}
\item\label[lem:G2 and G3]{lem:G2 and G3:item1} $\mathfrak{K} \vDash \mgrz$ but $\mathfrak{K} \nvDash \LKur$.
\item\label[lem:G2 and G3]{lem:G2 and G3:item2} $\mathfrak{H} \vDash \LKur$ but $\mathfrak{H} \nvDash \mgrz$.
\item\label[lem:G2 and G3]{lem:G2 and G3:item3} $\mgrz$ and $\LKur$ are incomparable.
\end{enumerate}
\end{theorem}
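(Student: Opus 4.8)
The plan is to read off parts (1) and (2) directly from the frame characterizations of $\mgrz$ and $\LKur$ established earlier, and then to obtain (3) as a formal consequence of the monotonicity of validity under logic inclusion.

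For part (1), I would note that $\mathfrak{K}$ is a finite $\ms$-frame whose relation $R$ is a partial order: reflexivity and transitivity are immediate, and the only distinct comparable pair is $aRb$, while $bRa$ fails, so $R$ is antisymmetric. Hence $\mathfrak{K} \vDash \mgrz$ by \cref{thm:dual characterization mgrz}. For the failure of $\LKur$, I would reuse the computation already carried out in the proof of \cref{thm: LKur iff K notin HS}: with the clopen set $U = \{b\}$ one has $\bbox \Diamond \Box U = Y$ and $\Diamond \forall U = \varnothing$, so the axiom $\bbox \Diamond \Box p \to \Diamond \forall p$ is refuted on $\mathfrak{K}$. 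Equivalently, $\qmax Y = \{b\}$ and $E_R[b] = \{b\}$, yet $E[b] = Y \nsubseteq \qmax Y$, so $\mathfrak{K}$ violates $\LKP$ and $\mathfrak{K} \nvDash \LKur$ by \cref{thm:LKur and local Kuroda principle}.

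For part (2), the frame $\mathfrak{H}$ satisfies $bRc$ and $cRb$ with $b \ne c$, so $R$ is not antisymmetric and thus not a partial order; \cref{thm:dual characterization mgrz} then gives $\mathfrak{H} \nvDash \mgrz$. The validity $\mathfrak{H} \vDash \LKur$ is already available from the proof of \cref{prop:global implies local}, where it was verified that $\mathfrak{H}$ satisfies $\LKP$ (witnessed by $c \in E_R[b]$ with $E[c] = \{c\} \subseteq \qmax Y$); \cref{thm:LKur and local Kuroda principle} then yields $\mathfrak{H} \vDash \LKur$.

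Part (3) follows formally. If $\LKur \subseteq \mgrz$, then since $\mathfrak{K} \vDash \mgrz$ every theorem of $\LKur$ would hold on $\mathfrak{K}$, giving $\mathfrak{K} \vDash \LKur$ and contradicting part (1); hence $\LKur \nsubseteq \mgrz$. Symmetrically, if $\mgrz \subseteq \LKur$, then $\mathfrak{H} \vDash \LKur$ would force $\mathfrak{H} \vDash \mgrz$, contradicting part (2); hence $\mgrz \nsubseteq \LKur$. Therefore $\mgrz$ and $\LKur$ are incomparable. Since every step either invokes an already-proved characterization or a finished computation, there is no substantial obstacle here; the only point needing care is the direction of the monotonicity used in (3), namely that any frame validating the larger logic also validates the smaller one.
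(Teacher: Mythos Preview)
Your proposal is correct and follows essentially the same approach as the paper: both use \cref{thm:dual characterization mgrz} for the $\mgrz$ (non-)validations and the frame characterization of $\LKur$ via $\LKP$ for the $\LKur$ (non-)validations, with part~(3) following immediately. The only cosmetic difference is that for $\mathfrak{H} \vDash \LKur$ the paper invokes \cref{rem:K and H and LKur} (via the construction of \cref{thm:skeleton M-frame}), whereas you cite the direct verification of $\LKP$ from the proof of \cref{prop:global implies local}; both routes are valid and equivalent.
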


\begin{proof}
\eqref{lem:G2 and G3:item1}. Since $\mathfrak{K}$ is a finite $\ms$-frame in which $R$ is a partial order, $\mathfrak{K} \vDash \mgrz$ by \cref{thm:dual characterization mgrz}.
On the other hand, $\mathfrak{K} \nvDash \LKur$ by \cref{thm: LKur iff K notin HS}.

\eqref{lem:G2 and G3:item2}. As was observed in \cref{rem:K and H and LKur}, $\mathfrak{H} \vDash \LKur$. On the other hand, $\mathfrak{H}\nvDash \mgrz$ by \cref{thm:dual characterization mgrz} because it contains a quasi-maximal point that is not maximal.

\eqref{lem:G2 and G3:item3}. This is immediate from \eqref{lem:G2 and G3:item1} and \eqref{lem:G2 and G3:item2}.
\end{proof}

By \cref{lem:G2 and G3}, $\mgrz$ cannot be the greatest modal companion of $\mipc$ because it is incomparable with $\LKur$, which is also a modal companion of $\mipc$ by \cref{thm:LKur modal companion mipc}.
This already implies that  
the natural generalization of Esakia's theorem to the monadic setting does not hold.
We show that even more is true by proving that there is no greatest modal companion of $\mipc$.

\begin{proposition}\label{prop:mgrz vee lkur not a modal companion}
$\mgrz \vee \LKur$ is not a modal companion of $\mipc$.
\end{proposition}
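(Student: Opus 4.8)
The plan is to exploit the fact that a modal logic is a modal companion of \emph{at most one} extension of $\mipc$: if $\M$ is a modal companion of $\L$, then $\L$ is forced to equal the intuitionistic fragment $\{\varphi : \M \vdash \varphi^t\}$. Thus, to show that $\mgrz \vee \LKur$ is not a modal companion of $\mipc$, it suffices to exhibit one formula $\varphi$ of $\Lae$ with $\mgrz \vee \LKur \vdash \varphi^t$ but $\mipc \nvdash \varphi$. The natural witness is the monadic Kuroda formula $\kur = \forall\neg\neg p \to \neg\neg\forall p$ itself.

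For the modal half I would first rewrite $\mgrz \vee \LKur = \mgrz \vee \GKur$ using \cref{prop:comparison LKur and GKur:item2}. Since $\GKur = \ms + \kur^t$ by \cref{prop:GKur ms+kurt}, we have $\GKur \vdash \kur^t$, and hence $\mgrz \vee \LKur = \mgrz \vee \GKur \vdash \kur^t$.

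For the intuitionistic half I would verify $\mipc \nvdash \kur$, which is the classical non-derivability of Kuroda's formula and can be certified inside the paper's machinery. The frame $\mathfrak{H}$ of \cref{fig:frame H} satisfies $\LKP$ but not $\GKP$ (its quasi-maximal point $b$ has $E[b] = \{a,b\} \nsubseteq \qmax Y$), so $\mathfrak{H} \vDash \LKur$ while $\mathfrak{H} \nvDash \GKur = \ms + \kur^t$ by \cref{thm:GKur and global Kuroda principle}. As $\mathfrak{H} \vDash \ms$, this forces $\mathfrak{H} \nvDash \kur^t$, whence $\LKur \nvdash \kur^t$. Since $\LKur$ is a modal companion of $\mipc$ by \cref{thm:LKur modal companion mipc}, the defining equivalence gives $\mipc \nvdash \kur$. (Equivalently, one may note that the $\mipc$-frame $\sk\mathfrak{K}$ fails $\KP$ by \cref{thm:kurbox frame conditions}, hence $\sk\mathfrak{K} \nvDash \kur$ by \cref{thm:Kur EQ[max]=max}.)

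Finally, I would assemble the contradiction: if $\mgrz \vee \LKur$ were a modal companion of $\mipc$, the definition would yield $\mipc \vdash \kur \iff \mgrz \vee \LKur \vdash \kur^t$, yet the right-hand side holds and the left-hand side fails. I expect no real obstacle here, because the weight of the argument has already been borne by \cref{prop:comparison LKur and GKur:item2}---the collapse $\mgrz \vee \LKur = \mgrz \vee \GKur$---together with the modal-companion status of $\LKur$. The one point demanding care is to keep $\Kur = \mipc + \kur$ strictly above $\mipc$: it is exactly the non-provability $\mipc \nvdash \kur$, combined with the fact that the join inherits $\GKur$'s theorem $\kur^t$, that destroys the modal-companion equivalence.
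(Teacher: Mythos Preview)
Your proposal is correct and follows essentially the same route as the paper: both arguments rest on \cref{prop:comparison LKur and GKur:item2} to get $\GKur \subseteq \mgrz \vee \LKur$, then observe that $\kur^t$ is a theorem of the join while $\kur$ is not a theorem of $\mipc$. The only difference is presentational: the paper packages the second step by invoking \cref{thm:GKur not modal comp Kur} (so the intuitionistic fragment of the join contains $\Kur \supsetneq \mipc$), whereas you spell out $\mipc \nvdash \kur$ explicitly via $\mathfrak{H}$ and \cref{thm:LKur modal companion mipc} (or, more directly, via $\sk\mathfrak{K}$ and \cref{thm:Kur EQ[max]=max}).
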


\begin{proof}
By \cref{prop:comparison LKur and GKur:item2}, $\GKur \subseteq \mgrz \vee \LKur$.
By \cref{thm:GKur not modal comp Kur}, $\Kur$ is the intuitionistic fragment of $\GKur$. Therefore, the intuitionistic fragment of $\mgrz \vee \LKur$ contains $\Kur$, which is a proper extension of $\mipc$. Thus, $\mgrz \vee \LKur$ is not a modal companion of $\mipc$.
\end{proof}

\begin{theorem}[Failure of Esakia's theorem for $\mipc$]\label{thm:no greatest mod comp}
There is no greatest modal companion of $\mipc$.
\end{theorem}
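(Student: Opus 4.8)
The plan is to argue by contradiction, leveraging the two incomparable modal companions of $\mipc$ produced in the preceding sections. Suppose that $\mipc$ had a greatest modal companion $\M$. By \cref{thm: MGrz modal comp of MIPC}, $\mgrz$ is a modal companion of $\mipc$, and by \cref{thm:LKur modal companion mipc} so is $\LKur$. Since $\M$ is assumed to be the greatest such logic, both $\mgrz \subseteq \M$ and $\LKur \subseteq \M$, and therefore $\mgrz \vee \LKur \subseteq \M$.

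The key observation I would then use is that the intuitionistic-fragment operation is monotone: if $\M_1 \subseteq \M_2$ are extensions of $\ms$, then $\M_1 \vdash \varphi^t$ implies $\M_2 \vdash \varphi^t$ for every $\varphi$, so the intuitionistic fragment of $\M_1$ is contained in that of $\M_2$. Combined with $\mgrz \vee \LKur \subseteq \M$, this reduces the problem to showing that the intuitionistic fragment of $\mgrz \vee \LKur$ already strictly exceeds $\mipc$. Note that this monotonicity step is exactly what lets the argument reach the \emph{hypothetical} greatest companion $\M$, which could a priori be strictly larger than $\mgrz \vee \LKur$, so that \cref{prop:mgrz vee lkur not a modal companion} alone does not yet suffice.

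To establish the required lower bound, I would invoke \cref{prop:comparison LKur and GKur:item2}, which gives $\mgrz \vee \LKur = \mgrz \vee \GKur$, so in particular $\GKur \subseteq \mgrz \vee \LKur$. By \cref{thm:GKur not modal comp Kur} the intuitionistic fragment of $\GKur$ is exactly $\Kur$, and hence, by monotonicity again, the intuitionistic fragment of $\mgrz \vee \LKur$ contains $\Kur$. Chaining the inclusions, the intuitionistic fragment of $\M$ contains $\Kur = \mipc + \kur$. This is precisely the content already recorded in \cref{prop:mgrz vee lkur not a modal companion}, now transported from $\mgrz \vee \LKur$ to $\M$.

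The contradiction is then immediate: since $\M$ is a modal companion of $\mipc$, its intuitionistic fragment must equal $\mipc$, yet we have shown it contains the proper extension $\Kur$. The only genuine subtlety---and the conceptual heart of the whole argument rather than an obstacle in this final deduction---is the asymmetry between $\LKur$ and $\GKur$ established earlier: $\LKur$ is a modal companion of $\mipc$ while $\GKur$ is not, even though $\mgrz \vee \LKur = \mgrz \vee \GKur$. Once that asymmetry is in hand, together with the incomparability of $\mgrz$ and $\LKur$ from \cref{lem:G2 and G3}, the nonexistence of a greatest modal companion follows by the short monotonicity argument above.
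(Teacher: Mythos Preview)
Your proof is correct and follows essentially the same route as the paper: assume a greatest modal companion $\M$, use that $\mgrz$ and $\LKur$ are both modal companions to get $\mgrz \vee \LKur \subseteq \M$, and then derive a contradiction via monotonicity of the intuitionistic fragment together with \cref{prop:mgrz vee lkur not a modal companion}. The only cosmetic difference is that you inline the content of \cref{prop:mgrz vee lkur not a modal companion} (tracing through $\GKur$ and $\Kur$) and phrase the contradiction as ``the intuitionistic fragment of $\M$ contains $\Kur$'' rather than, as the paper does, ``$\mgrz \vee \LKur$ would itself be a modal companion of $\mipc$''; these are equivalent reformulations of the same monotonicity step.
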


\begin{proof}
Suppose that there is a greatest modal companion $\M$ of $\mipc$. By \cref{thm: MS4 modal comp of MIPC,thm:LKur modal companion mipc}, both $\mgrz$ and $\LKur$ are modal companions of $\mipc$, and hence $\mgrz \vee \LKur \subseteq \M$. Therefore, the intuitionistic fragment of $\mgrz \vee \LKur$ is contained in the intuitionistic fragment of $\M$ which is $\mipc$, and hence $\mgrz \vee \LKur$ is a modal companion of $\mipc$. But this contradicts \cref{prop:mgrz vee lkur not a modal companion}. Thus, $\M$ does not exist.
\end{proof}

\begin{remark}
Although there is no greatest modal companion of $\mipc$, a standard argument utilizing Zorn's lemma shows that every modal companion of $\mipc$  is contained in a maximal modal companion of $\mipc$. We leave it as an open problem to determine the cardinality of the set of maximal modal companions of $\mipc$, and whether $\mgrz$ is one of those.
\end{remark}

\section{Conclusions}\label{sec: conclusions}

We have demonstrated that Esakia's theorem that $\Grz$ is the greatest modal companion of $\ipc$ fails as soon as we add to the language monadic quantification of one fixed variable. As we pointed out in the introduction, the reason 
why Esakia's theorem holds in the propositional case clould be summarized as follows: the category 
of descriptive $\ipc$-frames is equivalent to the category of partially ordered descriptive $\sfour$-frames and $\Grz$ is complete with respect to the latter category. If $\M$ is a modal companion of $\ipc$, then this category 
is a full subcategory of the category of descriptive $\M$-frames, yielding that $\M$ is contained in $\Grz$.

The situation changes considerably in the monadic setting since the addition of monadic modalities breaks the above correspondence between the two semantics. Indeed, while $Q$ is a continuous relation in a descriptive $\mipc$-frame $\F=(X,R,Q)$, the relation $E_Q$ may no longer be continuous. 
On the other hand, in a descriptive $\msfour$-frame $\G=(Y,R,E)$, the relation $E$ is continuous. In addition, a morphism between descriptive $\mipc$-frames does not have to be a p-morphism with respect to $E_Q$, while a morphism between descriptive $\msfour$-frames must be a p-morphism with respect to $E$. Thus, while the embedding $\sigma\colon \ipcfrm\to\sfrm$ yields an equivalence between $\ipcfrm$ and the category of partially ordered descriptive $\sfour$-frames, its natural extension to the monadic setting is {\em not} even well defined (neither on objects nor on morphisms). 
Consequently, although $\mgrz$ is complete with respect to the category of partially ordered descriptive $\msfour$-frames \cite{BK24}, if $\M$ is a modal companion of $\mipc$, the latter category 
is no longer a full subcategory of the category of descriptive $\M$-frames, resulting in the failure of the monadic version of Esakia's theorem.
One way to remedy this is to add appropriate axioms to restore the semantic balance enjoyed in the case of $\ipc$ and $\Grz$. This will be discussed in a follow up paper. 

We conclude the paper by addressing Naumov's claim 
that $\QGrz+\Box \exists xP(x) \to \Diamond \exists x\Box P(x)$ is a modal companion of $\iqc$ that strictly contains $\QGrz$ (see \cite{Nau91} and \cite[Thm. 2.11.14]{GSS09}).
More specifically, we show that $\mgrz + \Box \exists p \to \Diamond \exists \Box p$ is equal to $\mgrz\vee\GKur$, from which we derive that Naumov's logic is not a modal companion of $\iqc$. Therefore, while we verified Naumov's claim that $\QGrz$ is not the greatest modal companion of $\iqc$ for the monadic fragments of these logics, the full predicate case requires further study (see \cref{rem:discussion predicate} for more details). 

\begin{definition}
Let $\N = \ms + \Box \exists p \to \Diamond \exists \Box p$. We call $\N$ the \emph{Naumov logic}.
\end{definition}

Because $\ms \vdash \bdia p \leftrightarrow \Diamond \exists p$ (see the paragraph before \cref{prop:axiom GKur}), the following is straightforward.

\begin{lemma}\label{lem:axiom N}
$\N = \ms + \Box \exists p \to \bdia \Box p = \ms + \bbox \Diamond p \to \Diamond \forall p$.
\end{lemma}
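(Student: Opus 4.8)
The plan is to verify the two equalities in turn, in both cases reducing everything to the given equivalence $\ms \vdash \bdia p \leftrightarrow \Diamond \exists p$, the classical modal dualities for $\Box$, $\forall$, and $\bbox$, and equivalent replacement (see \cite[Thm.~3.65]{CZ97}). Since all the formulas in play are schemas and every extension of $\ms$ is closed under substitution, it suffices throughout to establish the relevant equivalences and interderivabilities over $\ms$.

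For the equality $\N = \ms + \Box \exists p \to \bdia \Box p$, I would instantiate $\bdia p \leftrightarrow \Diamond \exists p$ at $\Box p$, obtaining $\ms \vdash \bdia \Box p \leftrightarrow \Diamond \exists \Box p$. Equivalent replacement then gives $\ms \vdash (\Box \exists p \to \Diamond \exists \Box p) \leftrightarrow (\Box \exists p \to \bdia \Box p)$, so the defining axiom of $\N$ and the axiom $\Box \exists p \to \bdia \Box p$ generate the same extension of $\ms$.

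For the equality $\N = \ms + \bbox \Diamond p \to \Diamond \forall p$, I would show that $\Box \exists p \to \bdia \Box p$ and $\bbox \Diamond p \to \Diamond \forall p$ are interderivable over $\ms$. To derive the second from $\ms + (\Box \exists p \to \bdia \Box p)$, instantiate the axiom at $\neg p$ to get $\Box \exists \neg p \to \bdia \Box \neg p$. Because the base is classical, $\ms \vdash \neg \neg p \leftrightarrow p$, whence $\ms \vdash \exists \neg p \leftrightarrow \neg \forall p$ (recall $\exists = \neg \forall \neg$); together with $\ms \vdash \Box \neg p \leftrightarrow \neg \Diamond p$ and the dualities $\Box \neg \varphi \leftrightarrow \neg \Diamond \varphi$ and $\bdia \neg \varphi \leftrightarrow \neg \bbox \varphi$, equivalent replacement rewrites the formula as $\neg \Diamond \forall p \to \neg \bbox \Diamond p$, whose contrapositive is $\bbox \Diamond p \to \Diamond \forall p$. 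The reverse derivation is symmetric: instantiate $\bbox \Diamond p \to \Diamond \forall p$ at $\neg p$ and use $\ms \vdash \forall \neg p \leftrightarrow \neg \exists p$ and $\ms \vdash \Diamond \neg p \leftrightarrow \neg \Box p$ to recover $\Box \exists p \to \bdia \Box p$ after contraposing. Interderivability of the two schemas yields equality of the two extensions.

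As the paper already indicates by calling the statement straightforward, I do not anticipate any real obstacle here; the only thing demanding attention is careful bookkeeping of the modal dualities, and in particular the free use of $\neg \neg p \leftrightarrow p$ inside $\forall$ and $\exists$ to pass between $\exists \neg p$ and $\neg \forall p$ (and between $\forall \neg p$ and $\neg \exists p$), which relies on the classical propositional base of $\ms$.
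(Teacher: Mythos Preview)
Your proof is correct and is exactly the kind of routine verification the paper has in mind when it declares the lemma ``straightforward'' from $\ms \vdash \bdia p \leftrightarrow \Diamond \exists p$: the paper gives no further argument, and your use of that equivalence together with the classical dualities and contraposition fills in precisely the details one would expect.
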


By \cref{prop:comparison LKur and GKur:item2}, $\LKur$ and $\GKur$ coincide over $\mgrz$. The next proposition shows that over $\mgrz$ they also coincide with $\N$.

\begin{proposition}\label{prop:local global Kur and N over grz}
$\mgrz \vee \GKur = \mgrz \vee \LKur = \mgrz \vee \N$.
\end{proposition}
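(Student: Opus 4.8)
The plan is to leverage the already-established equality $\mgrz \vee \LKur = \mgrz \vee \GKur$ from \cref{prop:comparison LKur and GKur:item2} and sandwich $\mgrz \vee \N$ between the two sides via the chain
\[
\mgrz \vee \LKur \subseteq \mgrz \vee \N \subseteq \mgrz \vee \GKur.
\]
Since the outer two joins coincide, establishing these inclusions collapses all three logics to a single one. Thus it suffices to prove $\LKur \subseteq \N$ (which yields the first inclusion) and $\N \subseteq \mgrz \vee \GKur$ (which yields the second).

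The first inclusion is purely syntactic. By \cref{lem:axiom N}, $\N = \ms + \bbox \Diamond p \to \Diamond \forall p$. Since $\Box p \to p$ is an $\sfour$-axiom, monotonicity of $\Diamond$ gives $\ms \vdash \Diamond \Box p \to \Diamond p$, and applying the monotone $\sfour$-modality $\bbox$ yields $\ms \vdash \bbox \Diamond \Box p \to \bbox \Diamond p$. Composing with the axiom of $\N$ produces $\N \vdash \bbox \Diamond \Box p \to \Diamond \forall p$, which is exactly the axiom defining $\LKur$ over $\ms$. Hence $\LKur \subseteq \N$, and therefore $\mgrz \vee \LKur \subseteq \mgrz \vee \N$.

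The second inclusion is the crux, and I would argue it semantically. Since every extension of $\ms$ is complete with respect to its descriptive frames, and a descriptive $\ms$-frame validates $\mgrz \vee \GKur$ exactly when it validates both $\mgrz$ and $\GKur$, it is enough to show that any such frame $\G=(Y,R,E)$ (with $Q = E \circ R$) validates the axiom $\bbox \Diamond p \to \Diamond \forall p$ of $\N$. By \cref{thm:dual characterization mgrz} applied to the clopen $U = Y$ we have $\qmax Y = \max Y$, and by \cref{thm:GKur and global Kuroda principle} the frame satisfies $\GKP$, i.e.\ $E[\qmax Y] = \qmax Y$. Fix a clopen $V$ and a point $x \in \bbox \Diamond V$; the goal is to produce $z \in R[x]$ with $E[z] \subseteq V$. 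Using \cref{lem:facts about qmax:item2}, choose $q \in R[x] \cap \qmax Y$; then $q \in \max Y$, so $R[q] = \{q\}$. For any $w \in E[q]$, the point $q$ witnesses $x Q w$ (because $x R q$ and $q E w$ and $Q = E \circ R$), so $w \in Q[x] \subseteq \Diamond V$; but $w \in E[q] \subseteq E[\qmax Y] = \qmax Y = \max Y$, hence $R[w] = \{w\}$, which forces $w \in V$. Therefore $E[q] \subseteq V$ with $q \in R[x]$, giving $x \in \Diamond \forall V$. This shows $\bbox \Diamond V \subseteq \Diamond \forall V$ for all clopen $V$, so $\G \vDash \N$, and consequently $\N \subseteq \mgrz \vee \GKur$.

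The main obstacle is precisely this last step, and it hinges on a delicate interaction of the three relations: over $\mgrz$ each quasi-maximal point is genuinely maximal (so its $R$-image is a singleton), while $\GKP$ propagates maximality across its entire $E$-class. Together these let the $Q$-reachability packed into the hypothesis $\bbox \Diamond V$ collapse onto a single maximal $E$-class that must lie inside $V$. Once this is in place, combining the two inclusions with $\mgrz \vee \GKur = \mgrz \vee \LKur$ closes the chain and yields the stated triple equality; everything outside the semantic step is routine.
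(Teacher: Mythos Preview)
Your proof is correct, but the route differs from the paper's. The paper dispatches the new equality $\mgrz \vee \LKur = \mgrz \vee \N$ in one purely syntactic line: since the McKinsey formula $\Box\Diamond p \to \Diamond\Box p$ is a theorem of $\Grz$, one has $\Grz \vdash \Box\Diamond\Box p \leftrightarrow \Box\Diamond p$, and hence $\mgrz \vdash \bbox\Diamond\Box p \leftrightarrow \bbox\Diamond p$; this immediately identifies the $\LKur$-axiom $\bbox\Diamond\Box p \to \Diamond\forall p$ with the $\N$-axiom $\bbox\Diamond p \to \Diamond\forall p$ over $\mgrz$. You instead sandwich $\mgrz \vee \N$ between $\mgrz \vee \LKur$ and $\mgrz \vee \GKur$: the inclusion $\LKur \subseteq \N$ is the easy syntactic half (only $\Box p \to p$ is needed), while for $\N \subseteq \mgrz \vee \GKur$ you pass through descriptive-frame completeness and argue semantically, exploiting $\qmax Y = \max Y$ together with $\GKP$ to trap an entire $E$-class of a maximal point inside $V$. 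The paper's argument is shorter and avoids semantics altogether, isolating McKinsey as the single responsible ingredient; your argument is longer but makes the frame-theoretic mechanism explicit and is self-contained in that it does not invoke the McKinsey theorem of $\Grz$.
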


\begin{proof}
It is sufficient to show that $\mgrz \vee \LKur = \mgrz \vee \N$. Since $\LKur = \ms + {\bbox \Diamond \Box p \to \Diamond \forall p}$ and $\N = \ms + \bbox \Diamond p \to \Diamond \forall p$ (see \cref{lem:axiom N}), it is enough to show that $\mgrz \vdash \bbox \Diamond \Box p \leftrightarrow \bbox \Diamond p$, which can be seen as follows. 
It is well known that the McKinsey formula $\Box \Diamond p \to \Diamond \Box p$ is a theorem of $\Grz$.
Therefore, $\Grz \vdash  \Box \Diamond \Box p \leftrightarrow \Box \Diamond p$, and so $\mgrz \vdash \bbox \Box \Diamond \Box p \leftrightarrow \bbox \Box \Diamond p$. Thus, $\mgrz \vdash \bbox \Diamond \Box p \leftrightarrow \bbox \Diamond p$, concluding the proof.
\end{proof}

\begin{remark}
Let $\mathsf{MS4.1} = \ms + \Box \Diamond p \to \Diamond \Box p$. By arguing as in the proof of the previous proposition, 
$\mathsf{MS4.1} \vee \LKur = \mathsf{MS4.1} \vee \N$. It follows from \cite[p.~154]{Esa79b} (see also \cite[Prop.~3.46]{CZ97}) that descriptive $\mathsf{MS4.1}$-frames are exactly those descriptive $\mathsf{MS4}$-frames in which every quasi-maximal point is maximal.
Therefore, by arguing as in the proof of 
\cref{prop:comparison LKur and GKur:item2},
we also have that
$\mathsf{MS4.1} \vee \LKur = \mathsf{MS4.1} \vee \GKur$. Thus, $\GKur$, $\LKur$, and $\N$ are all instances of Kuroda-like logics that collapse over $\mathsf{MS4.1}$. It is natural to investigate such principles in more detail.
\end{remark}

\begin{theorem}\label{thm:Naumov}
$\QGrz+\Box \exists xP(x) \to \Diamond \exists x\Box P(x)$ is not a modal companion of $\iqc$.
\end{theorem}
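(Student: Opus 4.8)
The plan is to argue by contradiction. Write $\M$ for Naumov's logic $\QGrz+\Box\exists xP(x)\to\Diamond\exists x\Box P(x)$, and let $\mathsf{k}$ denote the predicate Kuroda formula $\forall x\,\neg\neg P(x)\to\neg\neg\forall x\,P(x)$. Assuming that $\M$ is a modal companion of $\iqc$, I would show that $\M\vdash\mathsf{k}^t$; the modal-companion biconditional $\iqc\vdash\chi\iff\M\vdash\chi^t$ then forces $\iqc\vdash\mathsf{k}$, contradicting the well-known fact recalled in the introduction that the Kuroda formula is not provable in $\iqc$. Crucially, the provability $\M\vdash\mathsf{k}^t$ is to be established with no reference to the modal-companion hypothesis, so the argument is not circular.

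The heart of the proof is thus to show $\M\vdash\mathsf{k}^t$, which I would do by descending to the monadic fragment. Under the standard correspondence between one-variable predicate formulas and formulas of the monadic languages $\Lae$ and $\Lba$, the formula $\mathsf{k}$ corresponds to the monadic Kuroda formula $\kur$; and since the G\"odel translation is defined uniformly on the connectives and treats $\forall x,\exists x$ exactly as $\forall,\exists$, the translation $\mathsf{k}^t$ corresponds to $\kur^t$. Working monadically, \cref{prop:local global Kur and N over grz} gives $\mgrz\vee\N=\mgrz\vee\GKur$, while \cref{prop:GKur ms+kurt} gives $\GKur=\ms+\kur^t$; hence $\mgrz\vee\N\vdash\kur^t$. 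It then remains to lift this monadic derivation to a predicate derivation in $\M$: every $\mgrz$-theorem, read as a one-variable predicate formula, is a theorem of $\QGrz\subseteq\M$ (since $\mgrz$ is the monadic fragment of $\QGrz$); the defining axiom $\Box\exists p\to\Diamond\exists\Box p$ of $\N$, read as a predicate formula, is Naumov's axiom $\Box\exists xP(x)\to\Diamond\exists x\Box P(x)$, which belongs to $\M$ by definition; and the inference rules of $\ms$ correspond to modus ponens, substitution, $\Box$-necessitation and $\forall x$-generalization in the predicate calculus. Consequently the monadic proof of $\kur^t$ transfers to a proof of $\mathsf{k}^t$ in $\M$.

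Putting the two halves together, $\M\vdash\mathsf{k}^t$ yields $\iqc\vdash\mathsf{k}$ via the modal-companion biconditional, the desired contradiction, so $\M$ is not a modal companion of $\iqc$. I expect the main obstacle to lie in the bookkeeping of the second paragraph: making precise the correspondence between one-variable predicate derivations and monadic propositional derivations, and checking that it commutes with the G\"odel translation, so that the monadic provability of $\kur^t$ genuinely entails $\M\vdash\mathsf{k}^t$. It is worth noting, however, that only the easy ``soundness'' direction of the monadic-fragment identifications is required, since one merely lifts an already-existing derivation upward; the genuinely hard input, that the Kuroda formula fails in $\iqc$, is classical and independent of the monadic machinery developed above.
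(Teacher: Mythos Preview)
Your proposal is correct and follows essentially the same route as the paper: both arguments show that $\mgrz\vee\N\vdash\kur^t$ (you via \cref{prop:local global Kur and N over grz} and \cref{prop:GKur ms+kurt} directly, the paper via \cref{prop:mgrz vee lkur not a modal companion} and \cref{prop:local global Kur and N over grz}), then lift this to $\M\vdash\mathsf{k}^t$ using that the monadic theorems embed into the one-variable predicate calculus, and conclude from $\iqc\nvdash\mathsf{k}$. Your contradiction wrapper is cosmetic, and the paper treats your ``bookkeeping obstacle'' with a single sentence (``It is clear that $\mgrz\vee\N$ is contained in the monadic fragment of $\M$''), confirming that only the easy soundness direction is needed.
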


\begin{proof}
It is clear that the logic $\mgrz \vee \N$ is contained in the monadic fragment of $\QGrz+\Box \exists xP(x) \to \Diamond \exists x\Box P(x)$. It follows from \cref{prop:mgrz vee lkur not a modal companion,prop:local global Kur and N over grz} that $\mgrz \vee \N$ is not a modal companion of $\mipc$. Therefore, $\QGrz+\Box \exists xP(x) \to \Diamond \exists x\Box P(x)$ proves the translation of a one-variable formula that is not a theorem of $\iqc$, and hence it cannot be a modal companion of $\iqc$.
\end{proof}

\begin{remark}\label{rem:discussion predicate}
As we pointed out in the introduction, Pankratyev \cite{Pan89} claimed that $\QGrz$ is a modal companion of $\iqc$,
however his proof relied on the Flagg-Friedman translation~\cite{FF86} of $\qsfour$ to $\iqc$, which is not faithful \cite{Ino92}.
Therefore, it remains open whether $\QGrz$ is a modal companion of $\iqc$.
Unfortunately, Kripke completeness of $\iqc$ does not help since it relies on non-noetherian predicate Kripke frames which are not models of $\QGrz$. A possible approach would be to use the more general Kripke bundle semantics \cite[Ch.~5]{GSS09} which would then require to prove that $\iqc$ is complete with respect to noetherian Kripke bundles. As far as we know, this remains an open problem.

If $\QGrz$ ends up being a modal companion of $\iqc$, 
Naumov's claim that $\QGrz$ is not the greatest such
would also require further investigation. As we saw in \cref{thm:Naumov}, the logic $\QGrz+\Box \exists xP(x) \to \Diamond \exists x\Box P(x)$ is not going to be useful for this purpose. It is more convenient to consider
\[
\mathsf{QLKur} \coloneqq \qsfour + \Box \forall x \Diamond \Box P(x) \to \Diamond \forall x P(x),
\]
which is the predicate version of $\LKur$, and is not comparable with $\QGrz$. 
However, this approach requires a proof that $\mathsf{QLKur}$ is a modal companion of $\iqc$. 
By \cref{rem:LKur canonical}, $\LKur$ is Kripke complete, from which it follows that $\LKur$ axiomatizes the monadic fragment of $\mathsf{QLKur}$.
Therefore, \cref{thm:LKur modal companion mipc} yields that the monadic fragment $\mathsf{QLKur}$ is a modal companion of $\mipc$.
However, it remains open whether $\mathsf{QLKur}$ is a modal companion of $\iqc$.
\end{remark}


\begin{thebibliography}{GKWZ03}

\bibitem[BBI23]{BBI23}
G.~Bezhanishvili, K.~Brantley, and J.~Ilin.
\newblock Monadic intuitionistic and modal logics admitting provability
  interpretations.
\newblock {\em J. Symb. Log.}, 88(1):427--467, 2023.

\bibitem[BC23]{BC23a}
G.~Bezhanishvili and L.~Carai.
\newblock Temporal interpretation of monadic intuitionistic quantifiers.
\newblock {\em Rev. Symb. Log.}, 16(1):164--187, 2023.

\bibitem[BC25]{BC24a}
G.~Bezhanishvili and L.~Carai.
\newblock Failure of the {B}lok-{E}sakia theorem in the monadic setting.
\newblock {\em Ann. Pure Appl. Logic}, 176(4):Paper No. 103527, 2025.

\bibitem[Bez99]{Bez99}
G.~Bezhanishvili.
\newblock Varieties of monadic {H}eyting algebras. {II}. {D}uality theory.
\newblock {\em Studia Logica}, 62(1):21--48, 1999.

\bibitem[Bez00]{Bez00}
G.~Bezhanishvili.
\newblock Varieties of monadic {H}eyting algebras. {III}.
\newblock {\em Studia Logica}, 64(2):215--256, 2000.

\bibitem[BK24]{BK24}
G.~Bezhanishvili and M.~Khan.
\newblock The monadic {G}rzegorczyk logic.
\newblock Submitted. Available at ar{X}iv:2412.10854, 2024.

\bibitem[BP82]{BP82}
W.~J. Blok and D.~Pigozzi.
\newblock On the structure of varieties with equationally definable principal
  congruences. {I}.
\newblock {\em Algebra Universalis}, 15(2):195--227, 1982.

\bibitem[Bul65]{Bul65}
R.~A. Bull.
\newblock A modal extension of intuitionist logic.
\newblock {\em Notre Dame J. Formal Logic}, 6(2):142--146, 1965.

\bibitem[Bul66]{Bul66}
R.~A. Bull.
\newblock {MIPC} as the formalisation of an intuitionist concept of modality.
\newblock {\em J. Symb. Log.}, 31(4):609--616, 1966.

\bibitem[CZ97]{CZ97}
A.~Chagrov and M.~Zakharyaschev.
\newblock {\em Modal logic}, volume~35 of {\em Oxford Logic Guides}.
\newblock The Clarendon Press, Oxford University Press, New York, 1997.

\bibitem[Esa79a]{Esa79b}
L.~Esakia.
\newblock On the theory of modal and superintuitionistic systems.
\newblock In {\em Logical inference ({R}ussian)}, pages 147--172. ``Nauka'',
  Moscow, 1979.

\bibitem[Esa79b]{Esa79}
L.~Esakia.
\newblock On the variety of {G}rzegorczyk algebras.
\newblock In {\em Studies in nonclassical logics and set theory ({R}ussian)},
  pages 257--287. ``Nauka'', Moscow, 1979.

\bibitem[Esa88]{Esa88}
L.~Esakia.
\newblock Provability logic with quantifier modalities.
\newblock In {\em Intensional logics and the logical structure of theories
  ({R}ussian)}, pages 4--9. ``Metsniereba'', Tbilisi, 1988.

\bibitem[Esa19]{Esa19}
L.~Esakia.
\newblock {\em Heyting algebras. {D}uality theory}, volume~50 of {\em Trends in
  Logic. Translated from the Russian by A. Evseev. Edited by G. Bezhanishvili
  and W. Holliday.}
\newblock Springer, 2019.

\bibitem[FF86]{FF86}
R.~C. Flagg and H.~Friedman.
\newblock Epistemic and intuitionistic formal systems.
\newblock {\em Ann. Pure Appl. Logic}, 32(1):53--60, 1986.

\bibitem[FS77]{FS77}
G.~Fischer~Servi.
\newblock On modal logic with an intuitionistic base.
\newblock {\em Studia Logica}, 36(3):141--149, 1977.

\bibitem[FS78a]{FS78a}
G.~Fischer~Servi.
\newblock The finite model property for {MIPQ} and some consequences.
\newblock {\em Notre Dame J. Formal Logic}, 19(4):687--692, 1978.

\bibitem[FS78b]{FS78b}
G.~Fischer~Servi.
\newblock Semantics for a class of intuitionistic modal calculi.
\newblock {\em Bull. Sect. Logic}, 7(1):26--29, 1978.

\bibitem[GKWZ03]{GKWZ03}
D.~M. Gabbay, A.~Kurucz, F.~Wolter, and M.~Zakharyaschev.
\newblock {\em Many-dimensional modal logics: theory and applications}.
\newblock Amsterdam; Boston: Elsevier North Holland, 2003.

\bibitem[G{\"{o}}d33]{God33}
K.~G{\"{o}}del.
\newblock Eine {Interpretation} des intuitionistischen {Aussagenkalk{\"u}ls}.
\newblock {\em Erg. Math. Kolloqu.}, 4:39--40, 1933.

\bibitem[GSS09]{GSS09}
D.~M. Gabbay, V.~B. Shehtman, and D.~P. Skvortsov.
\newblock {\em Quantification in nonclassical logic. {V}ol. 1}, volume 153 of
  {\em Studies in Logic and the Foundations of Mathematics}.
\newblock Elsevier B. V., Amsterdam, 2009.

\bibitem[HA28]{HA28}
D.~Hilbert and W.~Ackermann.
\newblock {\em Grundz{\"u}ge der theoretischen {Logik}.}, volume~27 of {\em
  Grundlehren Math. Wiss.}
\newblock Springer, Cham, 1928.
\newblock English translation: \emph{Principles of mathematical logic}, edited
  by {Robert} {E}. {Luce}, New {York}: {Chelsea} {Publishing} {Co}. 1950.

\bibitem[Hey56]{Hey56}
A.~Heyting.
\newblock {\em {I}ntuitionism: {A}n {I}ntroduction}.
\newblock Studies in logic and the foundations of mathematics. North-Holland
  Publishing Company, 1956.

\bibitem[Ino92]{Ino92}
T.~Inou\'e.
\newblock Flagg and {F}riedman's translation is not faithful.
\newblock {\em Z. Math. Logik Grundlag. Math.}, 38(5-6):551--554, 1992.

\bibitem[Kra99]{Kra99}
M.~Kracht.
\newblock {\em Tools and techniques in modal logic}, volume 142 of {\em Studies
  in Logic and the Foundations of Mathematics}.
\newblock North-Holland Publishing Co., Amsterdam, 1999.

\bibitem[MT48]{MT48}
J.~C.~C. McKinsey and A.~Tarski.
\newblock Some theorems about the sentential calculi of {L}ewis and {H}eyting.
\newblock {\em J. Symb. Log.}, 13:1--15, 1948.

\bibitem[MV57]{MV57}
A.~Monteiro and O.~Varsavsky.
\newblock {\'A}lgebras de {H}eyting mon{\'a}dicas.
\newblock {\em Actas de las X Jornadas de la Uni{\'o}n Matem{\'a}tica
  Argentina, Bah{\'\i}a Blanca}, pages 52--62, 1957.

\bibitem[Nau91]{Nau91}
P.~G. Naumov.
\newblock Modal logics that are conservative over intuitionistic predicate
  calculus.
\newblock {\em Moscow. Univ. Math. Bull. ({R}ussian)}, 6:86--90, 1991.

\bibitem[Ono77]{Ono77}
H.~Ono.
\newblock On some intuitionistic modal logics.
\newblock {\em Publ. Res. Inst. Math. Sci.}, 13(3):687--722, 1977.

\bibitem[Pan89]{Pan89}
N.~A. Pankratyev.
\newblock Predicate calculus and arithmetic with {G}rzegorczyk logic.
\newblock {\em Moscow. Univ. Math. Bull. ({R}ussian)}, (5):83--85, 1989.

\bibitem[Pri57]{Pri57}
A.~N. Prior.
\newblock {\em Time and modality}.
\newblock Greenwood Press, 1957.

\bibitem[RS53]{RS53}
H.~Rasiowa and R.~Sikorski.
\newblock Algebraic treatment of the notion of satisfiability.
\newblock {\em Fund. Math.}, 40:62--95, 1953.

\bibitem[TvD88]{TvD88}
A.~S. Troelstra and D.~van Dalen.
\newblock {\em Constructivism in Mathematics}, volume~1.
\newblock Elsevier, 1988.

\bibitem[Waj33]{Waj33}
M.~Wajsberg.
\newblock Ein erweiterter {K}lassenkalk\"ul.
\newblock {\em Monatsh. Math. Phys.}, 40(1):113--126, 1933.
\newblock English translation: An extended class calculus, in M. Wajsberg,
  \emph{Logical Works}, edited by S. J. Surma, Ossolineum 1977, pp. 50-61.

\bibitem[Wol93]{Wol93}
F.~Wolter.
\newblock Lattices of modal logics.
\newblock Ph.D. thesis, Free University of Berlin, 1993.

\end{thebibliography}
\end{document}